\let\your@endproof\endproof
\def\my@endproof{\your@endproof}
\def\endproof{\my@endproof\gdef\my@endproof{\your@endproof}}
\def\qedhere{\tag*{\endproofbox}\gdef\my@endproof{\relax}}
\title{Strict universes for Grothendieck topoi}
\author{Daniel Gratzer and Michael Shulman and Jonathan Sterling}
\address{%
  Department of Computer Science, Aarhus University\\
  Department of Mathematics, San Diego University\\
  Department of Computer Science and Technology, University of Cambridge%
}
\begin{document}

\maketitle
\begin{abstract}
  Hofmann and Streicher famously showed how to lift Grothendieck universes into presheaf topoi, and
  Streicher has extended their result to the case of sheaf topoi by sheafification. In parallel, van
  den Berg and Moerdijk have shown in the context of algebraic set theory that similar constructions
  continue to apply even in weaker metatheories. Unfortunately, sheafification seems not to preserve
  an important \emph{realignment} property enjoyed by presheaf universes that plays a critical
  role in models of univalent type theory as well as synthetic Tait computability.
  When multiple universes are present, realignment also
  implies a \emph{coherent} interpretation of connectives across all universes that justifies the cumulativity laws present in popular
  formulations of Martin-L\"of type theory.

  We observe that a slight adjustment to an argument of Shulman lifts
  a well-behaved cumulative universe hierarchy in the category of sets to a
  cumulative universe hierarchy satisfying the realignment property at every level in any
  Grothendieck topos. Hence one has direct interpretations of Martin-L\"of
  type theory with cumulative universes into all Grothendieck topoi. A further
  implication is to extend the reach of recent synthetic methods in the
  semantics of cubical type theory and the syntactic metatheory of type theory
  and programming languages to all Grothendieck topoi.
\end{abstract}

 \tableofcontents

\section{Introduction}
\label{sec:intro}

Grothendieck introduced the language of \emph{universes} to control the
size issues that plague a na\"ive categorical development of algebraic
geometry~\citep{sga:4}.  In a somewhat different line of research, Martin-L\"of
introduced universes into dependent type theory as a \emph{reflection
principle}~\citep{martin-lof:1971,martin-lof:itt:1975,martin-lof:1979,martin-lof:1984}.
In either case a universe parameterizes a class of maps that are closed under
enough operations to do mathematics, including dependent product/sum,
quotients, \etc.

Grothendieck's use of universes was located in the ambient set theory; each
universe $\mathscr{U}$ determines a category of $\mathscr{U}$-small sets and
functions that serves as a base for both enrichment and internalization,
generalizing the notions of locally small and small category respectively.  The
past three decades have however seen an increased interest in the adaptation of
universes to categories other than $\SET$:

\begin{enumerate}

  \item Universes play a central role in the \emph{algebraic
    set theory} of \citet{joyal-moerdijk:1995}, which explores the relationship between sets and classes from a categorical viewpoint.

  \item Voevodsky's elucidation of the univalence
    principle~\citep{voevodsky:2006}, foreshadowed
    by \citet{hofmann-streicher:1998}, has reinvigorated the study of universes in
    topoi. Closely related to Voevodsky's univalent universes are the
    \emph{object classifiers} of $\infty$-topos theory in the
    Joyal--Lurie--Rezk tradition~\citep{lurie:2009,rezk:2010}.

  \item It is of practical interest to employ Martin-L\"of type theory (MLTT)
    as an internal language for a variety of categories. In
    addition to the standard applications of internal methods to mathematics,
    the existence of topos models of MLTT is a critical ingredient for a
    number of recent results in type theory and programming languages,
    including the generalized abstraction theorem of
    \citet{sterling-harper:2021} and the proofs of normalization for cubical
    type theory and multi-modal dependent type
    theory~\citep{sterling-angiuli:2021,gratzer:2022:lics}.

\end{enumerate}

Unfortunately some doubt has proliferated in the type theoretic literature (\eg
\citet{xu:2015,xu-escardo:2016:universes,coquand-mannaa-ruch:2017}) as to when sufficiently well-adapted
universes exist in a topos. It is a well-known result of
\citet{hofmann-streicher:1997} that Grothendieck universes can be lifted
pointwise into presheaf topoi, and it is slightly less well-known that
sheafification preserves all the properties of this universe that do not
involve strict equality of codes~\citep{streicher:2005,van-den-berg:2011}. Such a sheafified
universe is already sufficient for nearly all mathematical purposes, but falls
short in applications to the semantics and metatheory of dependent type theory,
where certain strict laws not preserved by sheafification remain important.

In this paper we expose an alternative universe construction that applies
in an arbitrary Grothendieck topos, using nothing but the cocompleteness and
exactness properties of Grothendieck topoi. Ours is a variant of a
construction of a universe of presheaves due to \citet{shulman:2015:elegant};
we demonstrate that the resulting universe satisfies an important \emph{realignment} property, which suffices in particular to obtain models of
Martin-L\"of type theory with a cumulative hierarchy of universes in any
Grothendieck topos. The realignment condition is also an important ingredient
in the construction of \emph{univalent} universes for models of homotopy type theory~\citep{hottbook}.

\subsection{Elementary axioms for universes in a topos}

Inspired by the definitions of classes of open and small maps from algebraic
set theory, \citet{streicher:2005} has given a definition of a universe in an elementary
topos $\ECat$ which we review in \cref{def:universe} below.

\begin{notation}
  Given morphisms $\Mor[f]{A}{B}$ and $\Mor[g]{C}{D}$, a morphism $\Mor[\alpha]{f}{g}$ refers to a commuting square from $f$ to $g$:
  \[
    \begin{tikzpicture}[diagram]
      \SpliceDiagramSquare{
        nw = A,
        sw = B,
        ne = C,
        se = D,
        west = f,
        east = g,
        north = \partial_0\alpha,
        south = \partial_1\alpha,
      }
      \node[between = nw and se] {$\alpha$};
    \end{tikzpicture}
  \]

  We shall also freely write $\Mor{f}{g}$ for an \emph{anonymous} square from $f$ to $g$.
\end{notation}

\begin{definition}\label[definition]{def:universe}
  A class of arrows $\Cls\subseteq\Hom[\ECat]$ is called a \emph{universe}
  by \citet{streicher:2005} when it satisfies the following axioms:
  \begin{description}

    \item[(U1)] $\Cls$ is pullback-stable, \ie if $f\in\Cls$ and
      $\Mor{g}{f}$ is a cartesian square, then $g\in\Cls$.

    \item[(U2)] $\Cls$ contains all monomorphisms in $\ECat$.

    \item[(U3)] $\Cls$ is closed under composition.

    \item[(U4)] If $\Mor[f]{A}{I}$ and $\Mor[g]{B}{A}$ are in $\Cls$, then
     the pushforward $\Mor[f_*g]{B}{I}$ lies in $\Cls$.

    \item[(U5)] There exists a \emph{generic} morphism, \ie a morphism
      $\Mor[\pi]{E}{U}\in\Cls$ such that for any $f\in\Cls$ there exists a cartesian map
      $\Mor{f}{\pi}$.

  \end{description}
\end{definition}

The axioms of \cref{def:universe} ensure the closure of $\Cls$ under several
type theoretic operations, if we view an element $\Mor[f]{A}{B}\in\Cls$ as a
dependent type $x:B\vdash A\brk{x}$. Then \textbf{(U1)} corresponds to the
substitution action for dependent types and terms; \textbf{(U2)} states that
all propositions are small; \textbf{(U3-4)} provide for dependent sums and
dependent products, and \textbf{(U5)} provides a generic dependent type
$x:U\vdash E\brk{x}$ of which every other dependent type in $\Cls$ is a
substitution instance.

In the type-theoretic literature, it is the base of this family $U$ which is
called the universe and the generic family is the dependent type $\mathsf{El}$
rendering an element of this universe as a genuine type. We occasionally adopt
this terminology and blur the distinction between a universe and its generic map
by referring to $\Mor{E}{U}$ simply as a universe. Some caution is required:
while a generic map uniquely determines a universe, the converse is not
necessarily true and a universe can have multiple distinct generic maps.

In the context of Martin-L\"of type theory, it is common to study classes of
maps that may not satisfy all the axioms above; for instance, type theory is
often used in settings that do not have a single well-behaved notion of
proposition, so \textbf{(U2)} loses some significance.  We therefore define a
notion of \emph{pre-universe} below.

\begin{definition}\label[definition]{def:pre-universe}
  A \emph{pre-universe} is a class of arrows satisfying axioms \textbf{(U1,U3--5)}.
\end{definition}

\citet{streicher:2005} discusses some additional useful but optional axioms for universes.
\begin{description}
  \item[(U6)] (Propositional subuniverse) $\Cls$ contains the terminal map $\Mor{\Omega}{\TermObj{\ECat}}$.\footnote{\citet{streicher:2005} refers to this property as impredicativity, but we wish to avoid confusion with a different notion of impredicativity that involves the existence of dependent products along maps \emph{not} in $\Cls$, which has its prototype in the full internal subcategory of the category of assemblies spanned by modest sets~\citep{streicher:2017-2018,hyland:1988,hyland-robinson-rosolini:1990}.}
  \item[(U7)] (Descent) If $g\in\Cls$ and $\Mor|->>|{g}{f}$ is a cartesian epimorphism, then $f\in\Cls$.
\end{description}

A Grothendieck universe $\VTY$ in $\SET$ is readily seen to induce a universe $\Cls_\VTY$ in the
sense of \cref{def:universe} where $\Cls_\VTY$ consists of the collection of maps with $\VTY$-small
fibers.  \citet{hofmann-streicher:1997} and \citet{streicher:2005} have shown that $\Cls_\VTY$ can
be lifted systematically to presheaves and sheaves.
The first result in particular has been widely used in the semantics of type theory, because the
generic morphism satisfies a number of strict equations specific to its construction. These
additional equations are crucial for modeling \eg{} strict cumulative universes. Other more novel
applications of this strictness have emerged in models of Voevodsky's univalence axiom and homotopy
type theory. Only more recently has an axiomatic basis for these stricter Hofmann--Streicher
universes been isolated:

\begin{definition}\label[definition]{def:realignment}
  A universe $\Cls$ is said to have \emph{realignment} with respect to a class $\mathcal{M}$ of monomorphisms when
  axiom \textbf{(U8)} below is satisfied:\footnote{Our axiom \textbf{(U8)} is denoted (2$'$) by \citet{shulman:2015:elegant}.}
  \begin{description}
    \item[\textcolor{RegalBlue}{(U8)}]
      A chosen cartesian morphism $\Mor{h}{\pi}$ into the generic morphism can
      be extended along any cartesian monomorphism $\Mor|>->|{h}{f}$ lying
      horizontally over an element of $\mathcal{M}$ where $f\in\Cls$:
      \[
        \begin{tikzpicture}[diagram]
          \node (nw) {$h$};
          \node (sw) [below = 2.5cm of nw] {$f$};
          \node (ne) [right = 2.5cm of nw] {$\pi$};
          \path[>->] (nw) edge node [sloped,below] {cart.} (sw);
          \path[->] (nw) edge node [above] {cart.} (ne);
          \path[->,exists] (sw) edge node [sloped,below] {cart.} (ne);
        \end{tikzpicture}
      \]
  \end{description}

  Unless otherwise specified, $\mathcal{M}$ is the class of all monomorphisms.
\end{definition}

\begin{remark}
  \label[remark]{rem:other-appearances}
  While \citet{shulman:2015:elegant} extracted \textbf{(U8)} from the construction of the universal
  Kan fibration given by Kapulkin, Lumsdaine, and Voevodsky~\citep{kapulkin-lumsdaine:2021}, similar properties have since appeared in
  the construction of the universal left fibration~\citep[Corollary 5.2.6]{cisinski:2019} and the
  universal cocartesian fibration~\citep[\href{https://kerodon.net/tag/0293}{Tag 0293}]{kerodon}.
\end{remark}

\begin{remark}
  \label[remark]{rem:unfolded-realignment}
  Unfolding the fibrational language, \cref{def:realignment} can be stated more explicitly.
  We require that given
  $\Mor|>->|[m]{A}{B}\in\mathcal{M}$ and $\Mor[f]{Q}{B}\in\mathcal{S}$, any
  cartesian square $\Mor{m^*f}{\pi}$ extends along $m$ to a cartesian square
  $\Mor{f}{\pi}$:
  \[
    \begin{tikzpicture}[diagram,baseline = (sq/sw.base)]
      \SpliceDiagramSquare<sq/>{
        width = 3.5cm,
        height = 2.5cm,
        nw = m^*Q,
        sw = A,
        ne = E,
        se = U,
        east = \pi,
        west = m^*f,
        south/style = {color = {LightGray}},
        south/node/style = {xshift = 0.75cm, above},
      }
      \node (n) [between = sq/nw and sq/ne, yshift = -0.75cm] {$Q$};
      \node (s) [between = sq/sw and sq/se, yshift = -0.75cm] {$B$};
      \path[->] (n) edge node [upright desc] {$f$} (s);
      \path[>->] (sq/nw) edge (n);
      \path[>->] (sq/sw) edge node [sloped,below] {$m$} (s);
      \path[exists,->] (s) edge (sq/se);
      \path[exists,->] (n) edge (sq/ne);
    \end{tikzpicture}
  \]
\end{remark}

Intuitively, \textbf{(U8)} extends \textbf{(U5)} to provide a more refined
generic map where a representation $\Mor{f}{\pi}$ of an arrow $f \in \Cls$ can
be chosen to strictly extend a representation of $g$ where
$\Mor|>->|{g}{f} \in \mathcal{M}$. In practice, one often exhibits a
representation $\Mor{f}{\El}$ to show $f \in \Cls$ only to discard this square
to obtain a \emph{realigned} representation of $f$ which coheres with a
previously chosen representation of $\Mor|>->|{g}{f}$ using \textbf{(U8)}.

We note that \textbf{(U8)} subsumes \textbf{(U5)} under appropriate conditions on $\mathcal{M}$.
\begin{lemma}\label[lemma]{lem:genericity-from-realignment}
  Suppose $\Cls$ is a pullback-stable class of maps and $\pi\in\Cls$ is a morphism
  satisfying \textbf{(U8)} with $\mathcal{M}$ containing all maps of the form $\Mor{\InitObj{\ArrCat{\ECat}}}{f}$, where $\InitObj{\ArrCat{\ECat}}$ is the identity map on $\InitObj{\ECat}$; then the pair $\prn{\Cls,\pi}$ satisfies \textbf{(U5)}.
\end{lemma}

\begin{proof}
  Fixing an element $f\in\Cls$, we must construct a cartesian morphism
  $\Mor{f}{\pi}$; this is achieved by realigning
  $\Mor{\InitObj{\ArrCat{\ECat}}}{\pi}$ along
  $\Mor|>->|{\InitObj{\ArrCat{\ECat}}}{f}$:
  \[
    \begin{tikzpicture}[diagram,baseline=(f.base)]
      \node (h) {$\InitObj{\ArrCat{\ECat}}$};
      \node (f) [below = of h] {$f$};
      \node (pi) [right = of h] {$\pi$};
      \draw[->] (h) to (pi);
      \draw[>->] (h) to (f);
      \draw[->,exists] (f) to (pi);
    \end{tikzpicture}
    \qedhere
  \]
\end{proof}

\subsection{From realignment to cumulative hierarchies}\label{sec:cumulative-hierarchy}

The true utility of \textbf{(U8)} is the ability to choose a representation for a morphism
$f \in \Cls$ subject to a strict equation. For instance, \textbf{(U8)} is sufficient to `strictify'
a hierarchy of universes so that the choices of codes for connectives commute with the coercion maps from
one universe to another~\citep{shulman:2015:elegant}. In particular, let $\Cls\subseteq\mathcal{T}$
be two universes equipped with a choice of cartesian monomorphism
$\Mor|>->|[i]{\pi\Sub{\Cls}}{\pi\Sub{\mathcal{T}}}$. Further assume that $\mathcal{T}$ satisfies
realignment for the class of all monomorphisms.

\begin{notation}
  Given a morphism $\Mor[f]{X}{Y}$, we write $\Mor[P\Sub{f}]{\ECat}{\ECat}$ for the
  polynomial endofunctor given by the composite $Y_! \circ f_* \circ X^*$.
\end{notation} 

Both $\Cls,\mathcal{T}$ are closed under dependent products, hence there exist cartesian
morphisms
$\Mor[\Pi\Sub{\Cls}]{P\Sub{\pi\Sub{\Cls}}\prn{\pi\Sub{\Cls}}}{\pi\Sub{\Cls}}$
and
$\Mor[\Pi\Sub{\mathcal{T}}]{P\Sub{\pi\Sub{\mathcal{T}}}\prn{\pi\Sub{\mathcal{T}}}}{\pi\Sub{\mathcal{T}}}$,
but \cref{diag:pi-lift-fail} below need not commute:
\begin{equation}\label[diagram]{diag:pi-lift-fail}
  \begin{tikzpicture}[diagram,baseline=(sw.base)]
    \SpliceDiagramSquare{
      nw = P\Sub{\pi\Sub{\Cls}}\prn{\pi\Sub{\Cls}},
      sw = P\Sub{\pi\Sub{\mathcal{T}}}\prn{\pi\Sub{\mathcal{T}}},
      ne = \pi\Sub{\Cls},
      se = \pi\Sub{\mathcal{T}},
      west = P\Sub{i}\prn{i},
      east = i,
      north = \Pi\Sub{\Cls},
      south = \Pi\Sub{\mathcal{T}},
      west/style = >->,
      east/style = >->,
      width = 2.5cm,
    }
    \node [between = nw and se] {$\puncture$};
  \end{tikzpicture}
\end{equation}

We can replace $\Pi\Sub{\Cls},\Pi\Sub{\mathcal{T}}$ with new codes
$\Pi\Sub{\Cls}',\Pi\Sub{\mathcal{T}}'$ for which the analogue to
\cref{diag:pi-lift-fail} commutes. We set $\Pi\Sub{\Cls}' \coloneqq
\Pi\Sub{\Cls}$ and define $\Pi\Sub{\mathcal{T}}'$ by realigning
$i\circ\Pi'\Sub{\Cls}$ along $P_i\prn{i}$:
\begin{equation}
  \DiagramSquare{
    nw = P\Sub{\pi\Sub{\Cls}}\prn{\pi\Sub{\Cls}},
    sw = P\Sub{\pi\Sub{\mathcal{T}}}\prn{\pi\Sub{\mathcal{T}}},
    ne = \pi\Sub{\Cls},
    se = \pi\Sub{\mathcal{T}},
    west = P\Sub{i}\prn{i},
    east = i,
    north = \Pi\Sub{\Cls}',
    south = \Pi\Sub{\mathcal{T}}',
    west/style = >->,
    east/style = >->,
    south/style = exists,
    width = 2.5cm,
  }
\end{equation}

If we further assume that $\ECat$ is sufficiently cocomplete, \eg{}, if it is a
Grothendieck topos, the technique above easily extends to infinite and even
transfinite hierarchies of universes. In the latter case, one realigns along the
\emph{join} of all the subobjects
$\Mor|>->|{P\Sub{\pi\Sub{\Cls'_i}}\prn{\pi\Sub{\Cls'_i}}}{P\Sub{\pi\Sub{\mathcal{T}}}\prn{\pi\Sub{\mathcal{T}}}}$
pertaining to the formation data for dependent product type codes at lower
universes. Then a coherent hierarchy of such codes is built ``from the ground
up'' by induction.

\subsection{Structure of the paper}

We survey the landscape of universe constructions available in Grothendieck toposes and show that
they inherit a plentiful supply of well-behaved universes from $\SET$.

\textbf{\cref{sec:hs-and-s}}.  We revisit the presheaf-theoretic universe
construction of \citet{hofmann-streicher:1997}, lifting a Grothendieck universe
in $\SET$ to a universe of pointwise small families of presheaves satisfying
\textbf{(U1--8)}. Presenting a sheaf topos as a subcategory of a presheaf
topos, we recall from \citet{streicher:2005} that the Hofmann--Streicher
construction also produces universes of sheaves satisfying \textbf{(U1--6)},
as the sheafification of the generic small family of presheaves is generic for
small families of sheaves.

\textbf{\cref{sec:descent}}.  We review a number of categorical preliminaries
to our main result involving descent and $\kappa$-compactness.

\textbf{\cref{sec:universe}}. Adapting a construction of
\citet{shulman:2015:elegant}, we prove our main result
(\cref{cor:rel-cpt-universe-all-axioms}): the universe of relatively
$\kappa$-compact sheaves for a strongly inaccessible cardinal $\kappa$
satisfies all the universe axioms including \textbf{(U8)}. We deduce that
cumulative hierarchies of strict universes lift from $\SET$ to any Grothendieck
topos.

\textbf{\cref{sec:internal-formulations}}. We discuss and compare two
equivalent formulations of the realignment property employing the internal
language of a topos.

\textbf{\cref{sec:examples}}. The results of \cref{sec:universe} have important
consequences for the syntax and semantics of type theory; we review a few of
these applications in \cref{sec:examples}. For instance, we have already shown
that \textbf{(U8)} is sufficient to construct strictly cumulative hierarchies
of universes, and with the existence of these hierarchies in arbitrary
Grothendieck topoi the independence of several logical principles of
Martin-L{\"o}f type theory immediately follows; contrary to some claims, sheaf
semantics is sufficient and there is no need to move from sheaves to stacks. We
outline applications to independence results in \cref{sec:independence}.

We also illustrate the general utility of \textbf{(U8)} through two specific
examples: the semantics of univalence in homotopy type theory (\cref{sec:univalence})
and the construction of glued models of type theory
(\cref{sec:realignment-stc}) for proving syntactic metatheorems such as
canonicity, normalization, and decidability. In both cases, \textbf{(U8)}
allows us to leverage existing categorical machinery while still maintaining
the required strict equations.

\subsubsection*{Foundational assumptions.}

Throughout, we work in a sufficiently strong metatheory to ensure that $\SET$
comes equipped with a collection of universes \eg{}, ZFC with the Grothendieck
universe axiom; we make use of the axiom of choice. We return to this topic
briefly in \cref{sec:conclusion:constructive}.

\paragraph{Acknowledgments}

We are grateful to Steve Awodey, Thomas Streicher, and the anonymous referees
for helpful feedback and corrections to an earlier draft of this paper. This
research was supported by the United States Air Force Office of Scientific
Research under award numbers FA9550-21-1-0009 and FA9550-23-1-0728 (Tristan
Nguyen, program officer).

\section{Reviewing Hofmann and Streicher's universes}
\label{sec:hs-and-s}

We begin by recalling constructions from \citet{hofmann-streicher:1997} and \citet{streicher:2005}
lifting universes from $\SET$ to Grothendieck topoi. To begin with, fix a \emph{Grothendieck
  universe} $\VTY$, a transitive non-empty set closed under Kuratowski pairing,
power-sets, and $I$-indexed unions for each $I \in \VTY$.

\subsection{Universes of sets}

Each Grothendieck universe defines a universe as in \cref{def:universe}.

\begin{construction}
  Define the universe $\Cls\Sub{\VTY} \subseteq \Hom[\SET]$ to be the collection of all morphisms
  $\Mor[f]{X}{Y}$ with $\VTY$-small fibers: explicitly for each $y \in Y$, there exists a
  $u \in \VTY$ such that $u \cong f^{-1}\prn{y}$.
\end{construction}

Showing that $\Cls\Sub{\VTY}$ satisfies axioms \textbf{(U1--4,6,7)} is a
standard exercise. Setting $\VEL = \Sum{u : \VTY} u$, the generic map is given
by the projection $\Mor[\VEl]{\VEL}{\VTY}$. The proof that $\VEl$ is generic mostly
unsurprising but we note that the axiom of choice is required---essentially to
produce an assignment of $\VTY$ representatives for the fibers of a morphism in
$\Cls\Sub{\VTY}$ which are known only to be isomorphic to elements of $\VTY$.

\begin{lemma}
  \label[lemma]{lem:grothendieck-uni-realignment}
  The universe $\Cls\Sub{\VTY}$ satisfies the realignment axiom \textbf{(U8)}.
\end{lemma}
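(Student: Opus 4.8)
The plan is to unfold the statement of realignment (via \cref{rem:unfolded-realignment}) and construct the required extension completely explicitly, using the fact that in $\SET$ every monomorphism is, up to isomorphism, an honest subset inclusion, and that the generic map $\Mor[\VEl]{\VEL}{\VTY}$ has a canonical "universal" description: a cartesian square over $\VEl$ into which $f\in\Cls\Sub\VTY$ factors is precisely a choice, for each point $b$ in the base, of an element of $\VTY$ isomorphic to the fibre $f^{-1}(b)$, together with the witnessing bijection. So the data I am handed is a mono $\Mor|>->|[m]{A}{B}$, a map $\Mor[f]{Q}{B}\in\Cls\Sub\VTY$, and a cartesian square from $m^*f$ to $\VEl$; the latter amounts to a function $\alpha \colon A \to \VTY$ with $\alpha(a)\cong (m^*f)^{-1}(a) \cong f^{-1}(m(a))$ for $a\in A$, plus the chosen fibrewise bijections. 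I must extend this to a function $\beta\colon B\to\VTY$ with $\beta(b)\cong f^{-1}(b)$ for all $b\in B$ that restricts to $\alpha$ along $m$, together with compatible bijections.

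The key steps, in order: (1) Identify $m$ with the inclusion of a subset $A\subseteq B$ (replacing $A$ by its image, which is harmless since everything in sight is invariant under isomorphism of cartesian squares). (2) Define $\beta$ by cases: for $b\in A$, set $\beta(b) \coloneqq \alpha(b)$; for $b\in B\setminus A$, pick any $\beta(b)\in\VTY$ with $\beta(b)\cong f^{-1}(b)$, which exists because $f\in\Cls\Sub\VTY$ means the fibre is $\VTY$-small. (3) Define the total map $E' \coloneqq \Sum{b:B}\beta(b) \to B$ and the comparison into $\VEl = \Sum{u:\VTY}u$ in the evident way: it sends $(b,x)$ to $(\beta(b),x)$. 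This square is cartesian because fibrewise it is the identity-up-to-the-chosen-bijection. (4) Check that $Q \cong E'$ over $B$ — i.e. that $f$ itself is (isomorphic over $B$ to) this new total map — using the fibrewise bijections $\beta(b)\cong f^{-1}(b)$; and check that this isomorphism, over the subset $A$, agrees with the bijections already chosen for the given square over $m^*f$. This last compatibility is what forces us to reuse $\alpha$ and the given bijections verbatim on $A$ rather than choosing fresh ones.

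The only genuinely delicate point is bookkeeping the bijections coherently: realignment in the fibrational formulation (\cref{def:realignment}) asks that a \emph{chosen} cartesian map extend, so I must make sure the extension I build restricts \emph{on the nose} — not merely up to isomorphism — to the given cartesian square over $A$. This is handled by the case split in step (2): on $A$ I do not make any choice at all, I copy the given data. Everything else is a routine verification that the pasted square is cartesian, which in $\SET$ reduces to checking a bijection on each fibre. There is also a mild size remark to make — $E' = \Sum{b:B}\beta(b)$ need not be $\VTY$-small, but that is fine, as $\VEL$ itself is not $\VTY$-small either; what matters is only that the fibres are. I would close by noting that no choice principle beyond what is already implicit in "$f\in\Cls\Sub\VTY$" (a specified family of isomorphisms to elements of $\VTY$) is used, so the argument is as constructive as the ambient set theory allows.
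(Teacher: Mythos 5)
Your proposal is correct and is essentially the paper's own argument: define the classifying map on $B$ by cases, copying the given data verbatim on (the image of) $A$ and making a fresh $\VTY$-small choice off it, with well-definedness guaranteed by $m$ being monic, and then extend to the total spaces fibrewise. The only cosmetic difference is that the paper first invokes \textbf{(U5)} to obtain a global classifier $q_0$ and then overrides it on $A$, whereas you make the off-$A$ choices pointwise; your step (4) also spells out the total-space compatibility that the paper compresses into ``an identical procedure.''
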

\begin{proof}
  Recalling the characterization of \textbf{(U8)} given by
  \cref{rem:unfolded-realignment}, we fix a realignment problem of the following
  form:
  \[
    \begin{tikzpicture}[diagram,baseline = (sq/sw.base)]
      \SpliceDiagramSquare<sq/>{
        width = 3.5cm,
        height = 2.5cm,
        nw = m^*Q,
        sw = A,
        ne = {\Sum{u : \VTY} u},
        se = \VTY,
        east = \VEl,
        west = m^*f,
        south = {p},
        south/style = {color = {LightGray}},
        south/node/style = {xshift = 0.75cm, above},
      }
      \node (n) [between = sq/nw and sq/ne, yshift = -0.75cm] {$Q$};
      \node (s) [between = sq/sw and sq/se, yshift = -0.75cm] {$B$};
      \path[->] (n) edge node [upright desc] {$f$} (s);
      \path[>->] (sq/nw) edge (n);
      \path[>->] (sq/sw) edge node [sloped,below] {$m$} (s);
      \path[exists,->] (s) edge node[below] {$q$} (sq/se);
      \path[exists,->] (n) edge (sq/ne);
    \end{tikzpicture}
  \]

  Suppose further that $f \in \Cls\Sub{\VTY}$ and, through \textbf{(U5)}, pick some morphism
  $\Mor[q_0]{B}{\VTY}$ classifying $f$. While $q_0$ does not necessarily fit into the above diagram,
  we use it to define a map $\Mor[q]{B}{\VTY}$ that does:
  \[
    q(b) =
    \begin{cases}
      p(a) & \text{when } b = m(a)
      \\
      q_0(b) & \text{otherwise}
    \end{cases}
  \]

  This definition is well-defined as $m$ is a monomorphism; there is at most one $a$ such that
  $m(a) = b$. By definition $q$ fits into the triangle above, and an identical procedure extends it
  to the required cartesian square $\Mor{f}{\VEl}$.
\end{proof}

\begin{remark}
  The above proof can be generalized to show that any universe in a boolean
  topos satisfying \textbf{(U5)} satisfies \textbf{(U8)}.
\end{remark}

\begin{remark}
  In the category of sets, any universe in the sense of the present axioms
  determines a universe in the sense of Grothendieck. Streicher's axioms for
  universes can therefore be thought of as a more \emph{direct} alternative to
  Grothendieck's axioms, emphasizing ordinary mathematical constructions (\eg
  dependent product, sum, quotient) rather than set theoretical considerations
  (transitive membership, power sets, \etc).
\end{remark}

\subsection{Hofmann and Streicher's universe of presheaves}

Given a $\VTY$-small category $\CCat$, the universe $\Cls\Sub{\VTY}$ induces a suitable universe $\PrCls\Sub{\VTY}$ on
$\Psh{\CCat}$ that we explore below.

\begin{definition}
  We define $\PrCls\Sub{\VTY}$ to consist of morphisms $\Mor[f]{X}{Y}$ such that for each
  cartesian square of the following shape, the presheaf $y^*X$ is (essentially) $\VTY$-valued:
  \[
    \DiagramSquare{
      nw = y^*X,
      sw = \Yo{C},
      ne = X,
      se = Y,
      nw/style = pullback,
      south = y,
      east = f,
    }
  \]

  Explicitly, for each $D : \CCat$ the set $\prn{y^*X}_D$ must be $\VTY$-small.
\end{definition}

\begin{remark}
  We may equivalently describe $\PrCls\Sub{\VTY}$ as the class of maps
  $\Mor[f]{X}{Y}$ such that the fibers of $f$ over representables are
  $\VTY$-small.
\end{remark}

Again, it remains to show that this class satisfies the expected axioms. \textbf{(U1--4,6,7)} follow
through calculation (taking advantage of the standard construction of $f_*g$ for \textbf{(U4)} and
$\Omega$ for \textbf{(U6)}). \citet{hofmann-streicher:1997} show that
$\PrCls\Sub{\VTY}$ satisfies \textbf{(U5)} with a generic map
$\Mor[\El]{\EL}{\TY}$. The construction of $\El$ is highly dependent on $\Psh{\CCat}$ being a
presheaf category, taking advantage of the correspondence
$\Sl{\Psh{\CCat}}{\Yo{C}} \simeq \Psh{\Sl{\CCat}{C}}$ which represents the codomain fibration as a
strict 2-functor rather than the usual pseudofunctor. This correspondence restricts
to presheaves valued in the full subcategory of $\SET$ spanned by elements of
$\VTY$ to induce an equivalence $\Sl{\Psh[\VTY]{\CCat}}{\Yo{C}} \simeq
\Psh[\VTY]{\Sl{\CCat}{C}}$. We use this to define $\TY_C$ as follows:
\[
  \TY_C = \Psh[\VTY]{\Sl{\CCat}{C}}
\]

The generic family $\El$ is most directly defined as a presheaf over $\ElCat{\TY}$, again taking
advantage of the equivalence $\Sl{\Psh{\CCat}}{\TY} \simeq \Psh{\ElCat{\TY}}$
\[
  \El_{\prn{C,X}} = X_{\prn{C,\IdArr{}}}
\]

The following is a result of \textcite{hofmann-streicher:1997}.

\begin{theorem}
  $\El$ satisfies \textbf{(U5)}.
\end{theorem}
\begin{proof}
  Fix a map $\Mor[f]{Q}{X} \in \PrCls\Sub{\VTY}$. We must show that there exists
  some cartesian square $\Mor{f}{\El}$. First, let us note that $\Mor[f]{Q}{X}$
  induces a presheaf $F : \Psh{\ElCat{X}}$ and our assumption that
  $f \in \PrCls\Sub{\VTY}$ ensures that $F$ is essentially $\VTY$-small. In
  particular, we may choose $F' \cong F$ such that $F'$ belongs to the
  subcategory $\Psh[\VTY]{\ElCat{X}}$.

  We will now construct a cartesian square $\Mor{f}{\El}$ by defining a morphism
  explicitly $\Mor[q]{X}{\TY}$ and then argue that $q^*\El = f$. To this end,
  let us fix $C : \CCat$ along with $x \in X\Sub{C}$ and define
  $q_C\prn{x} \in \TY_C = \Psh[\VTY]{\Sl{\CCat}{C}}$:
  \[
    q_C\prn{x}\Sub{\prn{D,c}} = F'\prn{D,x \cdot c}
  \]
  The computation that $q$ organizes into a natural transformation is
  routine.

  It remains only to argue that $q^*\El$ is isomorphic to $f$. Examining the
  definition of $\El$, it is easiest to argue this by once more passing to
  $\Psh{\ElCat{X}}$ and showing that $q^*\El \cong F$. However, by definition
  $q^*\El$ is isomorphic to $F'$ which is in turn isomorphic to $F$.
\end{proof}

The generic map $\El$ satisfies a number of strict equations and, in particular, it satisfies
\textbf{(U8)}. The proof is similar to \cref{lem:grothendieck-uni-realignment}, but the additional
indexing over $\CCat$ obscures this similarity. Accordingly, we introduce a small amount of
machinery beforehand.

Observe first that we may view both $\VTY$ and $\VEL$ as categories,
respectively the categories of $\VTY$-sets and pointed $\VTY$-sets; this
viewpoint is exposed in detail by
\citet[Section~1]{awodey-gambino-hazratpour:2021}. Given that both $\VTY$ and
$\VEL$ are small, we may view them as categories internal to $\SET$. For formal
reasons, the projection $\Mor[\VEl]{\VEL}{\VTY}$ is then a category internal to
$\ArrCat{\SET}$. From this perspective, each
$\El_C = \Mor{\Psh[\VEL]{\Sl{\CCat}{C}}}{\Psh[\VTY]{\Sl{\CCat}{C}}} :
\ArrCat{\SET}$ (the component of the presheaf morphism $\El$ at $C:\CCat$) is
precisely the objects of the category $\VEl$-valued presheaves on
$\Mor[\IdArr{}]{\Sl{\CCat}{C}}{\Sl{\CCat}{C}}$ internal to $\ArrCat{\SET}$.

Next, let $\Mor[\alpha]{f}{\El}$ be a cartesian map in $\ArrCat{\Psh{\CCat}}$; there is a canonical
cartesian map $\Mor[\hat\alpha_C]{f_C}{\VEl}$ in $\ArrCat{\SET}$ defined like so:
\[
  \hat\alpha_C\prn{x} = \alpha_C\prn{x}\prn{\IdArr{C}}
\]

Returning to the perspective of $\ArrCat{\SET}$, the element $\alpha_C\prn{x}$
is a $\VEl$-valued presheaf on $\Sl{\CCat}{C}$, hence evaluating at $\IdArr{C}$
yields an element of $\VEl$.

\begin{theorem}
  The universe $\PrCls\Sub{\VTY}$ satisfies realignment \textbf{(U8)}.
\end{theorem}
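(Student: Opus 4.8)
The plan is to transport the realignment problem across the Hofmann--Streicher presentation of the codomain fibration as a strict $2$-functor, reducing it to an elementary rigidification problem for $\VTY$-valued presheaves on a category of elements. Following \cref{rem:unfolded-realignment}, I would first unfold the statement: we are handed a monomorphism $\Mor|>->|[m]{A}{B}$ in $\Psh{\CCat}$, a map $\Mor[f]{Q}{B}\in\PrCls\Sub{\VTY}$, and a cartesian square $\Mor[\alpha]{m^*f}{\El}$, and we must produce a cartesian square $\Mor[\beta]{f}{\El}$ restricting to $\alpha$ along $m$. Using the equivalences $\Sl{\Psh{\CCat}}{B}\simeq\Psh{\ElCat{B}}$ and $\Sl{\Psh{\CCat}}{\TY}\simeq\Psh{\ElCat{\TY}}$ together with the defining formulas $\TY_C=\Psh[\VTY]{\Sl{\CCat}{C}}$ and $\El_{\prn{C,X}}=X_{\prn{C,\IdArr{}}}$, I would set up the following dictionary. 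For a map $\Mor[g]{G}{H}$ in $\Psh{\CCat}$, write $\bar g$ for the presheaf on $\ElCat{H}$ corresponding to $g$ under the first equivalence; then $g\in\PrCls\Sub{\VTY}$ holds precisely when $\bar g$ is objectwise $\VTY$-small, and a cartesian square from $g$ into $\El$ amounts to a \emph{rigidification} of $\bar g$ -- a functor $\Mor[\widetilde g]{\ElCat{H}^{\mathrm{op}}}{\VTY}$ equipped with a natural isomorphism $\tau\colon\widetilde g\cong\bar g$ of $\SET$-valued functors (such rigidifications existing exactly because of objectwise smallness). Moreover, pulling such a square back along a map $\Mor[u]{K}{H}$ corresponds to precomposing $\widetilde g$ and $\tau$ with the functor $\ElCat{K}^{\mathrm{op}}\to\ElCat{H}^{\mathrm{op}}$ induced by $u$. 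Making this dictionary precise -- checking that every identification holds on the nose -- is essentially the purpose of the $\hat\alpha_c$ machinery above, in which reading off the value at the terminal object $\IdArr{c}$ of $\Sl{\CCat}{c}$ carries out the translation from presheaf-level to set-level data.

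Granting the dictionary, the realignment problem becomes the following. The monomorphism $m$ induces a subcategory inclusion $\Mor|>->|[\iota]{\ElCat{A}}{\ElCat{B}}$ (in fact a full sieve), and $\alpha$ supplies a functor $\Mor[p]{\ElCat{A}^{\mathrm{op}}}{\VTY}$ with a natural isomorphism $\sigma\colon p\cong\bar f\circ\iota^{\mathrm{op}}$; we must extend this data along $\iota$ to a functor $\Mor[\widetilde f]{\ElCat{B}^{\mathrm{op}}}{\VTY}$ and natural isomorphism $\tau\colon\widetilde f\cong\bar f$. I would construct $\widetilde f$ by making choices only on objects and letting the morphism component be forced. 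On objects in the image of $\iota$, set $\widetilde f\prn{\iota a}\coloneqq p\prn{a}$ and $\tau_{\iota a}\coloneqq\sigma_a$; on each remaining object $x$ of $\ElCat{B}$, use the $\VTY$-smallness of $\bar f\prn{x}$ to pick \emph{any} $\VTY$-set $\widetilde f\prn{x}$ together with an isomorphism $\Mor[\tau_x]{\widetilde f\prn{x}}{\bar f\prn{x}}$. On a morphism $g$ of $\ElCat{B}$ there is now no choice: $\widetilde f\prn{g}$ is defined as the conjugate of $\bar f\prn{g}$ by the isomorphisms $\tau$, which is the unique candidate making $\tau$ natural. Functoriality of $\widetilde f$ then follows from functoriality of $\bar f$, naturality of $\tau$ holds by construction, and the equalities $\widetilde f\circ\iota^{\mathrm{op}}=p$ and $\tau\circ\iota^{\mathrm{op}}=\sigma$ hold -- on objects by definition, and on morphisms by unwinding the naturality of $\sigma$. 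Transporting $\prn{\widetilde f,\tau}$ back across the dictionary yields a cartesian square $\Mor[\beta]{f}{\El}$ restricting to $\alpha$ along $m$; since nothing was assumed of $m$ beyond being a monomorphism, this establishes \textbf{(U8)} for the class of all monomorphisms.

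The main obstacle is concentrated in the first step: establishing the strict dictionary between cartesian squares into $\El$ over $H$ and rigidifications of the presheaf of elements $\bar g$. This amounts to verifying that the Hofmann--Streicher construction renders the relevant equivalences and the classification of families strictly -- not merely pseudo-functorially -- compatible, so that prescribing the rigidification on the nose over $A$ really does pin down a square over $B$ extending $\alpha$. Once that is in place, the argument is just the familiar ``rigidify a presheaf while prescribing its restriction to a subcategory'' move, a cleaner incarnation of the case split in \cref{lem:grothendieck-uni-realignment}: here only the object component of the rigidified functor calls for choices -- the morphism component being pinned down by $\tau$ -- and those object-level choices may be made freely away from $\ElCat{A}$. (If preferred, one could instead organize the object-level choices by a direct appeal to \cref{lem:grothendieck-uni-realignment}.)
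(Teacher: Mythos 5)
Your proof is correct and rests on the same mechanism as the paper's: use the strictness of the Hofmann--Streicher construction to reduce realignment in $\Psh{\CCat}$ to free choices of $\VTY$-isomorphs of fibers away from the image of $m$, with the morphism action pinned down by the prescribed isomorphism. The paper organizes the choices per object $c:\CCat$, invokes \cref{lem:grothendieck-uni-realignment} pointwise, and reassembles via the formula $\check\beta_c\prn{x}\prn{z}=\beta_d\prn{z\cdot x}$; you organize the choices per object of $\ElCat{B}$ and make the reassembly formula disappear by working through the strict slice equivalence $\Sl{\Psh{\CCat}}{B}\simeq\Psh{\ElCat{B}}$ together with the identification of maps into $\TY$ with $\VTY$-valued presheaves on categories of elements. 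Your framing makes the paper's $\check\beta$ formula and the subsequent functoriality check conceptually transparent (they are precomposition with $\ElCat{B}\to\ElCat{\TY}$), at the cost of having to state and verify the strict ``dictionary''---which you correctly identify as the crux, and which is indeed exactly what the $\hat\alpha_c$ machinery in the paper encodes in componentwise form.
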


\begin{proof}
  Fix a realignment problem of the following form in which $\phi$ and $\alpha$ are cartesian, and
  there exists some cartesian map $\Mor[\chi]{f}{\El}$ that we wish to realign as the dotted lift
  depicted below:
  \begin{equation}\label[diagram]{diag:lift-glue:0}
    \begin{tikzpicture}[diagram,baseline=(sw.base)]
      \node (nw) {$h$};
      \node (sw) [below = 2.5cm of nw] {$f$};
      \node (ne) [right = 2.5cm of nw] {$\El$};
      \path[>->] (nw) edge node [left] {$\phi$} (sw);
      \path[->] (nw) edge node [above] {$\alpha$} (ne);
      \path[->,exists] (sw) edge node [sloped,below] {cart.} (ne);
    \end{tikzpicture}
  \end{equation}

  For each $C : \CCat$, we transform the above into a realignment problem for the universe
  $\Mor[\VEl]{\VEL}{\VTY}$ of sets in terms of the cartesian map
  $\Mor[\hat\alpha_C]{h_C}{\VEl}$. This yields a cartesian lift $\Mor[\beta_C]{f_C}{\VEl}$ in the
  following configuration.
  \begin{equation}\label[diagram]{diag:lift-glue:1}
    \begin{tikzpicture}[diagram,baseline=(sw.base)]
      \node (nw) {$h_C$};
      \node (sw) [below = 2.5cm of nw] {$f_C$};
      \node (ne) [right = 2.5cm of nw] {$\VEl$};
      \path[>->] (nw) edge node [left] {$\phi_C$} (sw);
      \path[->] (nw) edge node [above] {$\hat\alpha_C$} (ne);
      \path[->,exists] (sw) edge node [sloped,below] {$\beta_C$} (ne);
    \end{tikzpicture}
  \end{equation}

  The above is possible because $f_C$ is classified by $\VEl$. Hence we may define a natural
  transformation $\Mor[\check{\beta}]{f}{\El}$ fitting into \cref{diag:lift-glue:0} as follows:
  \[
    \check{\beta}_C\prn{x}\prn{\Mor[z]{D}{C}} = \beta_D\prn{z\cdot x}
  \]

  The functorial action on morphisms of $\Mor{z'}{z} : \Sl{\CCat}{C}$ is
  obtained from the fact that each $\beta_D\prn{z\cdot x}$ is isomorphic to
  $\chi_D\prn{z\cdot x}\prn{\IdArr{D}}$, which is a fiber of a $\VEl$-valued
  presheaf and hence has the needed functorial action.
  To check that $\check{\beta}$ restricts along $\phi$ to $\alpha$, we fix
  $\Mor[z]{D}{C}$ and compute:
  \begin{align*}
    \check{\beta}_C\prn{\phi_C\prn{x}}\prn{z}
    &= \beta_D\prn{z\cdot\phi_D\prn{x}}
    \\
    &= \beta_D\prn{\phi_C\prn{z\cdot x}}
    \\
    &= \hat\alpha_D\prn{z\cdot x}
    \\
    &= \alpha_D\prn{z\cdot x}\prn{\IdArr{D}}
    \\
    &= \alpha_C\prn{x}\prn{z}
      \qedhere
  \end{align*}
\end{proof}

\begin{theorem}
  \label[theorem]{thm:hs}
  The class of morphisms $\PrCls\Sub{\VTY}$ in $\Psh{\CCat}$ is a universe satisfying
  \textbf{(U1--8)}.
\end{theorem}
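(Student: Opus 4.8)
The plan is to assemble \cref{thm:hs} from the pieces already in hand rather than re-proving anything from scratch. The statement asserts that $\PrCls\Sub{\VTY}$ is a universe satisfying \textbf{(U1--8)}; the text immediately preceding the theorem has already asserted that \textbf{(U1--4,6,7)} follow ``through calculation'' (citing the standard constructions of $f_*g$ for \textbf{(U4)} and $\Omega$ for \textbf{(U6)}), that \citet{hofmann-streicher:1997} establish \textbf{(U5)} via the explicit generic family $\Mor[\El]{\EL}{\TY}$, and the displayed theorem just above establishes \textbf{(U8)}. So the core of the proof is simply to collect these citations and observations. First I would restate that \textbf{(U1)} (pullback-stability) is immediate from the pointwise definition, since pullbacks of presheaves are computed objectwise and a pullback of a $\VTY$-small set along any function is again $\VTY$-small. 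Then \textbf{(U2)} (monos are small): a mono of presheaves is objectwise injective, and a subset of a $\VTY$-small set is $\VTY$-small since $\VTY$ is closed under power-sets (hence under subsets).

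Next I would handle \textbf{(U3)} and \textbf{(U4)}. For \textbf{(U3)}, closure under composition reduces objectwise to the fact that $\VTY$ is closed under $I$-indexed unions: the fibers of a composite are indexed sums of fibers. For \textbf{(U4)}, I would invoke the standard pointwise formula for dependent product $f_*g$ in a presheaf topos — its value at an object $D$ is a set of natural-transformation-like data ranging over the slice, and the relevant indexing categories are $\VTY$-small since $\CCat$ is, so $\VTY$-closure under products and indexed unions gives $\VTY$-smallness. For \textbf{(U6)} one notes $\Omega_D = \{$sieves on $D\}$ is a subset of a power-set of a $\VTY$-small hom-set, hence $\VTY$-small; for \textbf{(U7)} (descent) one uses that a cartesian cover exhibits the fibers of $f$ as quotients/images of fibers of $g$, again staying inside $\VTY$. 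I would not grind through these; a sentence apiece citing \citet{hofmann-streicher:1997,streicher:2005} suffices, since the paper's own prose already declares them routine.

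Then \textbf{(U5)} is cited directly from \citet{hofmann-streicher:1997}: the generic map is $\Mor[\El]{\EL}{\TY}$ with $\TY_C = \Psh[\VTY]{\Sl{\CCat}{C}}$ and $\El_{(C,X)} = X_{(C,\IdArr{})}$, exactly as displayed in the excerpt. Finally \textbf{(U8)} is exactly the content of the theorem displayed immediately before \cref{thm:hs}, so I would simply point to it. The one thing worth a remark is coherence between the generic map chosen for \textbf{(U5)} and the one used for \textbf{(U8)}: they must be the \emph{same} $\El$, which they are, so \cref{lem:genericity-from-realignment} is not even needed here (though it offers an alternative route: prove \textbf{(U8)} and deduce \textbf{(U5)} from it, given \textbf{(U1)}).

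The main obstacle — such as it is — is purely expository rather than mathematical: deciding how much of the ``standard exercise'' for \textbf{(U1--4,6,7)} to spell out versus cite. Since the surrounding text has already committed to citing these, I would keep the proof short, devoting at most one line to each of \textbf{(U1)}--\textbf{(U4)}, \textbf{(U6)}, \textbf{(U7)}, with the key observation in each case being that the defining closure properties of a Grothendieck universe $\VTY$ (pairing, power-sets, $\VTY$-indexed unions, and the derived closure under $\VTY$-indexed products) are exactly matched to the objectwise formulas for pullback, composition, dependent product, the subobject classifier, and image factorization in $\Psh{\CCat}$, using crucially that $\CCat$ — and hence every slice $\Sl{\CCat}{C}$ and every hom-set — is $\VTY$-small. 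Here is the proof:

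\begin{proof}
  Axioms \textbf{(U5)} and \textbf{(U8)} have been established above: \textbf{(U5)}
  by \citet{hofmann-streicher:1997} via the generic map $\Mor[\El]{\EL}{\TY}$, and
  \textbf{(U8)} by the preceding theorem, for the \emph{same} generic map. (Alternatively,
  given \textbf{(U1)} below, \textbf{(U5)} follows from \textbf{(U8)} by
  \cref{lem:genericity-from-realignment}.) It remains to record \textbf{(U1--4,6,7)},
  all of which reduce objectwise to the defining closure properties of the Grothendieck
  universe $\VTY$, using that $\CCat$ is $\VTY$-small and hence so is each slice
  $\Sl{\CCat}{D}$ and each hom-set $\Hom[\CCat]{C}{D}$; see
  \citet{hofmann-streicher:1997,streicher:2005}. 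We indicate the point in each case.

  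\textbf{(U1)} Pullbacks in $\Psh{\CCat}$ are computed objectwise, and a pullback of a
  $\VTY$-small set along any function is $\VTY$-small; so $\PrCls\Sub{\VTY}$ is
  pullback-stable.

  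\textbf{(U2)} A monomorphism of presheaves is objectwise injective, and a subset of a
  $\VTY$-small set is $\VTY$-small since $\VTY$ is closed under power-sets.

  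\textbf{(U3)} The fibers of a composite $g\circ f$ are $\VTY$-indexed sums of fibers
  of $f$ indexed by fibers of $g$; as $\VTY$ is closed under $I$-indexed unions for
  $I\in\VTY$, $\PrCls\Sub{\VTY}$ is closed under composition.

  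\textbf{(U4)} Using the standard pointwise formula for $f_*g$ in a presheaf topos, the
  value $\prn{f_*g}_D$ is a set of sections indexed over $\Sl{\CCat}{D}$ and over fibers
  of $f$; since these indexing data lie in $\VTY$ and $\VTY$ is closed under $\VTY$-indexed
  products (a consequence of power-set and union closure), $f_*g\in\PrCls\Sub{\VTY}$
  whenever $f,g\in\PrCls\Sub{\VTY}$.

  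\textbf{(U6)} $\Omega_D$ is the set of sieves on $D$, a subset of the power-set of the
  $\VTY$-small set $\coprod_{C}\Hom[\CCat]{C}{D}$, hence $\VTY$-small.

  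\textbf{(U7)} If $g\in\PrCls\Sub{\VTY}$ and $\Mor|->>|{g}{f}$ is a cartesian
  epimorphism, then objectwise each fiber of $f$ is a quotient of a fiber of $g$, hence
  $\VTY$-small; so $f\in\PrCls\Sub{\VTY}$.
\end{proof}
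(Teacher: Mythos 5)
Your proposal is correct and follows essentially the same route as the paper, which presents \cref{thm:hs} as a summary of the material established immediately before it (the realignment theorem for $\El$, the citation of \citet{hofmann-streicher:1997} for \textbf{(U5)}, and the remark that \textbf{(U1--4,6,7)} ``follow through calculation''); you simply spell out the objectwise calculations that the paper leaves implicit. Your observation that the same generic map $\El$ witnesses both \textbf{(U5)} and \textbf{(U8)} is the right coherence point, and your note that \cref{lem:genericity-from-realignment} gives an alternative derivation of \textbf{(U5)} from \textbf{(U8)} is also accurate. One tiny remark on \textbf{(U7)}: since the square $\Mor|->>|{g}{f}$ is cartesian, each fiber of $f$ is in fact \emph{isomorphic} to (not merely a quotient of) a corresponding fiber of $g$, which makes the smallness transfer even more immediate; this does not affect the correctness of your argument.
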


\subsection{Streicher's universe of sheaves}

Fixing a Grothendieck topology $J$ on $\CCat$, we show that the universe $\PrCls\Sub{\VTY}$ induces a
universe on $\Sh{\CCat,J}$.
Let $i : \Sh{\CCat,J} \to \Psh{\CCat}$ denote the inclusion geometric morphism, so that $i_*$ is the inclusion functor and $i^*$ is sheafification.
\begin{definition}
  We define $\ShCls\Sub{\VTY}$ to be the collection of all maps $f$ such that
  $i_*f \in \PrCls\Sub{\VTY}$.
\end{definition}

This collection of maps is easily shown to satisfy \textbf{(U1--4)} because
$i_*$ preserves finite limits. The existence of a generic map \textbf{(U5)} has
been the source of controversy within the type-theoretic literature; one
potential candidate is the restriction of $\pi\Sub{\PrCls\Sub{\VTY}}$ to the presheaf of pointwise $\VTY$-small
sheaves, but this is not actually a sheaf as pointed out by
\citet{xu-escardo:2016:universes}.
\citet{streicher:2005} proposed a more direct approach: the generic map for
$\ShCls\Sub{\VTY}$ is the sheafification of the generic map for
$\PrCls\Sub{\VTY}$. To prove this, we recall Proposition~5.4.4 of \citet{van-den-berg:habil}:

\begin{proposition}
  \label[proposition]{lem:sheafification-preserves-smallness}
  If $f \in \PrCls\Sub{\VTY}$ then $i^*f \in \ShCls\Sub{\VTY}$.
\end{proposition}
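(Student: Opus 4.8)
The plan is to show that if $f \in \PrCls\Sub{\VTY}$ then its sheafification $i^*f$ lands in $\ShCls\Sub{\VTY}$, i.e.\ that $i_* i^* f \in \PrCls\Sub{\VTY}$. Unwinding the definition of $\PrCls\Sub{\VTY}$, this amounts to checking that for every object $C : \CCat$ and every $y : \Yo{C} \to i_* i^* Y$ classifying a point of the base, the pullback presheaf $y^*(i_* i^* X)$ is pointwise $\VTY$-valued. So the heart of the matter is a smallness bookkeeping argument: sheafification is a localization, and a value of $i^* X$ at an object is computed as a (filtered or transfinite) colimit of matching families built out of the values of $X$. Since $X$ already has $\VTY$-small values, and $\CCat$ and the covering sieves are $\VTY$-small, each such colimit is a $\VTY$-indexed colimit of $\VTY$-small sets, hence $\VTY$-small by closure of $\VTY$ under $I$-indexed unions and the other Grothendieck-universe axioms.

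The steps I would carry out, in order: (1) reduce to showing $(i^*X)_D$ is $\VTY$-small for every $D$, given that $(X)_D$ is $\VTY$-small for every $D$ and $f \in \PrCls\Sub{\VTY}$ — this uses that $i_*$ is the identity on underlying presheaves and that pullback of a sheafification along a representable can be computed before or after applying $i_*i^*$, together with pullback-stability of $\PrCls\Sub{\VTY}$ (U1); (2) recall the explicit construction of sheafification, either as the two-step plus-construction $X \mapsto X^{++}$ or as a localization at a $\VTY$-small set of monos, so that $(X^+)_D = \colim_{R} \mathrm{Match}(R, X)$ where $R$ ranges over the ($\VTY$-small) poset of $J$-covering sieves of $D$ and $\mathrm{Match}(R,X)$ is a subset of a $\VTY$-small product $\prod_{(D' \to D) \in R} X_{D'}$; (3) invoke the Grothendieck-universe closure properties: a $\VTY$-small product of $\VTY$-small sets is $\VTY$-small, a subset of a $\VTY$-small set is $\VTY$-small (via power-sets), and a $\VTY$-small colimit of $\VTY$-small sets is $\VTY$-small (via $I$-indexed unions and quotients, which are available since $\VTY$ is closed under the requisite unions); (4) iterate this twice for the plus–plus construction and conclude $(i^*X)_D \in \VTY$; (5) package the pointwise bound back into the statement $i_* i^* f \in \PrCls\Sub{\VTY}$.

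The main obstacle I anticipate is step (2)/(3): being careful that \emph{all} the indexing data involved in the sheafification colimit is genuinely $\VTY$-small. In particular one must use that $\CCat$ is a $\VTY$-small category (so the set of arrows into $D$, and hence the set of sieves on $D$ and the set of $J$-covers of $D$, are $\VTY$-small), and that the colimit defining the plus-construction is filtered over this $\VTY$-small diagram. A secondary subtlety is that $\VTY$-smallness for presheaves in the definition is ``essentially $\VTY$-valued'' (up to isomorphism), so the colimit need only be isomorphic to an element of $\VTY$, which is exactly what closure under $I$-indexed unions plus quotients by $\VTY$-small equivalence relations delivers; no strict equality is needed here, consistent with the paper's observation that this is the part of the construction sheafification \emph{does} preserve. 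Once the smallness bookkeeping is done, the rest is formal. Since this is essentially a restatement of van den Berg's Proposition~5.4.4, I would cite that argument and only sketch the smallness count, as it is a routine verification.
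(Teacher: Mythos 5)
The plus-construction plus smallness-bookkeeping strategy is the right one, and the Grothendieck-universe closure properties you invoke are exactly what is needed. However, there is a genuine gap in step (1). The hypothesis $f \in \PrCls\Sub{\VTY}$ does \emph{not} give you that each $X_D$ is $\VTY$-small; it gives only that the fibers of $f$ over representables are pointwise $\VTY$-small, equivalently that each fiber $f_D^{-1}(\eta)$ is $\VTY$-small for every $\eta\in Y_D$ -- with no bound whatsoever on the sizes of $X_D$ and $Y_D$ themselves. So ``given that $(X)_D$ is $\VTY$-small'' is an unwarranted strengthening, and the conclusion you want -- that $(i^*X)_D$ is $\VTY$-small -- is likewise not what must be shown.

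The proposed reduction does not repair this. You suggest using \textbf{(U1)} to pass to the case $Y = \Yo{C}$, where $X$ would indeed be pointwise small. But to verify $i_*i^*f \in \PrCls\Sub{\VTY}$ you must control $z^*\bigl(i_*i^*X\bigr)$ for \emph{every} $z : \Yo{D}\to i_*i^*Y$, and such a $z$ need not factor through the unit $Y \to i_*i^*Y$: the unit is a local isomorphism, but it is not in general surjective on sections, so not every element of $(i_*i^*Y)_D$ lifts to $Y_D$. Relatedly, the claim that ``pullback of a sheafification along a representable can be computed before or after applying $i_*i^*$'' is false as stated: $\Yo{D}$ is generally not a sheaf, so $\Yo{D}\times_{i_*i^*Y}i_*i^*X$ is not a sheaf either, and cannot be $i_*i^*$ of anything.

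The fix is to run your matching-family count directly at the level of fibers. Fix $D$ and $\bar y \in (Y^+)_D$ represented by a covering sieve $R$ of $D$ and a matching family $(y_g)_{g\in R}$. The fiber $\bigl((f^+)_D\bigr)^{-1}(\bar y)$ is a quotient of $\coprod_{S\leq R} M_S$ where $M_S \subseteq \prod_{g\in S} f^{-1}_{\mathrm{dom}(g)}(y_g)$ is the set of compatible families with components in the fibers of $f$. Each $f^{-1}_{\mathrm{dom}(g)}(y_g)$ is $\VTY$-small by hypothesis, $S$ and the poset of $S\leq R$ are $\VTY$-small since $\CCat$ is, and $\VTY$ is closed under the requisite products, subsets, coproducts and quotients; so the fiber is $\VTY$-small. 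Iterate once more for $f^{++}$ and repackage as in your step (5). Your steps (2)--(4) carry out essentially this count already -- only the object being bounded must be changed from $(i^*X)_D$ to the fibers of $(i^*f)_D$.
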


With this to hand, we immediately conclude that $i^*\El \in \ShCls\Sub{\VTY}$.
\begin{corollary}
  The family $i^*\El$ is generic for $\ShCls\Sub{\VTY}$.
\end{corollary}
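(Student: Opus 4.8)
The goal is to show that $i^*\El$ is generic for $\ShCls\Sub{\VTY}$, i.e. that it satisfies \textbf{(U5)}: for every $f \in \ShCls\Sub{\VTY}$, there exists a cartesian morphism $\Mor{f}{i^*\El}$ in $\Sh{\CCat,J}$.

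The plan is to combine \cref{lem:sheafification-preserves-smallness} with the genericity of $\El$ for presheaves (\cref{thm:hs}) and the adjunction $i^* \dashv i_*$. First I would take $f \in \ShCls\Sub{\VTY}$, so by definition $i_*f \in \PrCls\Sub{\VTY}$. By \textbf{(U5)} for $\PrCls\Sub{\VTY}$ (\cref{thm:hs}), there is a cartesian square $\Mor{i_*f}{\El}$ in $\Psh{\CCat}$. Now I would apply sheafification $i^*$ to this square; since $i^*$ preserves finite limits, it preserves pullbacks, so the result is a cartesian square $\Mor{i^*i_*f}{i^*\El}$ in $\Sh{\CCat,J}$. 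Finally, using that the counit $i^*i_* \Rightarrow \Id$ is an isomorphism (because $i_*$ is fully faithful, $\Sh{\CCat,J}$ being a reflective subcategory), we have $i^*i_*f \cong f$, so this is a cartesian square $\Mor{f}{i^*\El}$ as required.

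One point needs a little care: applying $i^*$ to a cartesian square only produces a cartesian square because $i^*$ is left exact, and one should also note that the morphism $i^*i_*f$ really is identified with $f$ via the counit compatibly with the square structure — this is immediate naturality of the counit, which intertwines the pullback square for $i_*f$ with the desired one for $f$. I would also remark in passing that \cref{lem:sheafification-preserves-smallness} guarantees $i^*\El \in \ShCls\Sub{\VTY}$ itself, which is needed for $i^*\El$ to even be a candidate generic morphism (the map whose existence \textbf{(U5)} posits must itself lie in the class); this was already observed in the sentence preceding the corollary.

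The main obstacle — if any — is purely bookkeeping: verifying that the counit isomorphism $i^*i_*f \cong f$ respects the cartesian structure so that the transported square genuinely witnesses $f$ as a pullback of $i^*\El$. Since everything in sight is natural and $i^*$ is left exact, there is no real difficulty; the proof is essentially a one-line diagram chase once \cref{lem:sheafification-preserves-smallness} and \cref{thm:hs} are in hand.
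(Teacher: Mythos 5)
Your proof is correct and is essentially identical to the paper's: fix $f$, use $i_*f\in\PrCls\Sub{\VTY}$ and \textbf{(U5)} for presheaves to get a cartesian square into $\El$, apply the left exact functor $i^*$, and identify $i^*i_*f\cong f$. The extra remarks about naturality of the counit and about $i^*\El$ itself lying in $\ShCls\Sub{\VTY}$ are sound and match what the paper records just before the corollary.
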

\begin{proof}
  Fix $\Mor[f]{X}{Y} \in \ShCls\Sub{\VTY}$. By definition,
  $i_*f \in \PrCls\Sub{\VTY}$ so by \textbf{(U5)} the following cartesian
  square exists:
  \begin{equation}
    \DiagramSquare{
      nw = {i_*X},
      sw = {i_*Y},
      ne = {\EL},
      se = {\TY},
      nw/style = pullback,
    }
  \end{equation}

  The image of this cartesian square under $i^*$ remains cartesian and thus shows that
  $f \cong i^*i_*f$ is classified by $i^*\El$.
\end{proof}

\begin{theorem}
  \label[theorem]{thm:s}
  The class of maps $\ShCls\Sub{\VTY}$ is a universe satisfying \textbf{(U1--6)}.
\end{theorem}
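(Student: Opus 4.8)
The plan is to verify axioms \textbf{(U1--6)} for $\ShCls\Sub{\VTY}$ one at a time, reducing each to the corresponding fact about $\PrCls\Sub{\VTY}$ established in \cref{thm:hs} by exploiting the good exactness properties of the adjunction $i^*\dashv i_*$. The key observations are that $i_*$ is fully faithful and preserves finite limits, that $i^*$ preserves finite limits and is a localization, and that a map $f$ in $\Sh{\CCat,J}$ belongs to $\ShCls\Sub{\VTY}$ precisely when $i_*f\in\PrCls\Sub{\VTY}$. The generic map is $i^*\El$, whose genericity \textbf{(U5)} was just established in the preceding corollary.

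First I would dispatch \textbf{(U1--4)}. For \textbf{(U1)}, given $f\in\ShCls\Sub{\VTY}$ and a pullback $g$ of $f$ in $\Sh{\CCat,J}$, apply $i_*$; since $i_*$ preserves pullbacks, $i_*g$ is a pullback of $i_*f\in\PrCls\Sub{\VTY}$, so $i_*g\in\PrCls\Sub{\VTY}$ by pullback-stability of the presheaf universe, whence $g\in\ShCls\Sub{\VTY}$. For \textbf{(U2)}, a mono $m$ in $\Sh{\CCat,J}$ has $i_*m$ a mono in $\Psh{\CCat}$ (as $i_*$ is a right adjoint and hence preserves monos), and monos lie in $\PrCls\Sub{\VTY}$. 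For \textbf{(U3)}, if $f,g\in\ShCls\Sub{\VTY}$ are composable then $i_*(g\circ f)=i_*g\circ i_*f$ is a composite of maps in $\PrCls\Sub{\VTY}$. Axiom \textbf{(U4)}, closure under dependent products, is the one genuinely load-bearing step: here I would use the fact—already invoked in the definition of $\ShCls\Sub{\VTY}$—that the dependent product (pushforward) functor in $\Sh{\CCat,J}$ is computed by restricting the presheaf dependent product, i.e.\ $i_*$ commutes with $f_*$ in the appropriate sense because $i_*$ preserves the relevant limits and the sheaf dependent product is the sheaf reflection of a presheaf-theoretic construction; the point is that $i_*(f_*g)$ agrees up to the presheaf dependent product $(i_*f)_*(i_*g)$, which lies in $\PrCls\Sub{\VTY}$ by \textbf{(U4)} for presheaves. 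I expect this compatibility of $i_*$ with dependent products to be the main obstacle, since unlike the finite-limit axioms it requires a genuine computation with the explicit formulas for $f_*$ in a sheaf topos rather than a one-line abstract-nonsense argument.

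Next, \textbf{(U5)} is the content of the corollary immediately preceding the statement: $i^*\El$ is generic for $\ShCls\Sub{\VTY}$, using \cref{lem:sheafification-preserves-smallness} to see $i^*\El\in\ShCls\Sub{\VTY}$ and the fact that $i^*$ preserves the classifying cartesian square obtained from the presheaf generic map applied to $i_*f$. Finally \textbf{(U6)}: the subobject classifier $\Omega\Sub{\Sh{\CCat,J}}$ satisfies $i_*\Omega\Sub{\Sh{\CCat,J}}\cong\Omega_J$, the presheaf of $J$-closed sieves, which is a pointwise $\VTY$-small subpresheaf of $\Omega\Sub{\Psh{\CCat}}$ (each hom-set of the $\VTY$-small site $\CCat$ is $\VTY$-small, so there are only $\VTY$-many sieves, a fortiori $\VTY$-many closed ones); hence the terminal map out of $i_*\Omega\Sub{\Sh{\CCat,J}}$ lies in $\PrCls\Sub{\VTY}$ by \textbf{(U6)} for presheaves, so the terminal map out of $\Omega\Sub{\Sh{\CCat,J}}$ lies in $\ShCls\Sub{\VTY}$. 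I would also remark, as the surrounding text already signals, that \textbf{(U7)} and crucially \textbf{(U8)} are \emph{not} claimed here, since sheafification is not known to preserve realignment—this gap is precisely the motivation for the alternative construction in \cref{sec:universe}.
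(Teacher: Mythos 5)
Your proof is correct and follows the same route the paper indicates, though the paper compresses the verification of \textbf{(U1--4)} to a one-sentence remark that $i_*$ preserves finite limits and leaves \textbf{(U6)} to the reader. You are right to single out \textbf{(U4)} as the only axiom not covered by finite-limit preservation alone, but the compatibility of $i_*$ with dependent products is in fact a clean piece of abstract nonsense and does not require ``explicit formulas'' for $f_*$ in the sheaf topos. The natural isomorphism $i_*\prn{f_*g} \cong \prn{i_*f}_*\prn{i_*g}$ follows from a short Yoneda argument using only the adjunction $i^* \dashv i_*$, the left exactness of $i^*$, and $i^*i_* \cong \IdArr{}$: for any $k : K \to i_*I$ in $\Psh{\CCat}$, maps into either side over $i_*I$ unwind to maps $i^*K \times_I A \to B$ over $A$ in $\Sh{\CCat,J}$, the key point being that the two ways of pulling $k$ back to $A$, namely $f^*\prn{i^*k}$ and $i^*\prn{\prn{i_*f}^*k}$, coincide because $i^*$ preserves pullbacks. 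Your phrasing that ``the sheaf dependent product is the sheaf reflection of a presheaf-theoretic construction'' is not quite the right way to frame it: the content is rather that $\prn{i_*f}_*\prn{i_*g}$ is already a sheaf, being isomorphic to $i_*\prn{f_*g}$. The rest of your argument --- \textbf{(U1--3)} via $i_*$ preserving composition, monomorphisms, and pullbacks, \textbf{(U5)} via the preceding corollary, \textbf{(U6)} via the pointwise $\VTY$-small presheaf of $J$-closed sieves --- is correct and matches what the paper intends.
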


It is natural to wonder whether this universe satisfies \textbf{(U8)}, but unfortunately this does
not seem to be the case. Fix a realignment problem in $\Sh{\CCat}$:
\begin{equation}\label[diagram]{diag:sh-glue-problem-0}
  \begin{tikzpicture}[diagram,baseline=(sw.base)]
    \node (nw) {$h$};
    \node (sw) [below = 2.5cm of nw] {$f$};
    \node (ne) [right = 2.5cm of nw] {$i^*{\El}$};
    \path[>->] (nw) edge node [left] {$\phi$} (sw);
    \path[->] (nw) edge node [above] {$\alpha$} (ne);
    \path[->,exists] (sw) edge node [sloped,below] {cart.} (ne);
  \end{tikzpicture}
\end{equation}

By definition, $i_*{f}$ and $i_*{h}$ both belong to
$\PrCls\Sub{\VTY}$. Moreover, since $i_*i^*{\El} \in \PrCls\Sub{\VTY}$ we obtain
a cartesian morphism $\Mor[u]{i_*i^*{\El}}{\El}$ and so
\cref{diag:sh-glue-problem-0} induces a realignment problem in $\Psh{\CCat}$
that can then be solved:
\begin{equation}\label[diagram]{diag:sh-glue-problem-1}
  \begin{tikzpicture}[diagram,baseline=(sw.base)]
    \node (nw) {$i_*{h}$};
    \node (sw) [below = 2.5cm of nw] {$i_*{f}$};
    \node (ne) [right = 2.5cm of nw] {$i_*i^*{\El}$};
    \node (nee) [right = 2.5cm of ne] {$\El$};
    \path[>->] (nw) edge node [left] {$i_*\phi$} (sw);
    \path[->] (nw) edge node [above] {$i_*\alpha$} (ne);
    \path[->] (ne) edge node [above] {$u$} (nee);
    \path[->,exists] (sw) edge node [sloped,below] {$\beta$} (nee);
  \end{tikzpicture}
\end{equation}

While this appears promising, there is no obvious way to relate this realignment problem in $\El$
to a solution in $i^*{\El}$. In particular, $i^*{u}$ is not the counit
$\Mor[\epsilon]{i^*i_*i^*{\El}}{i^*{\El}}$ so $i^*{\beta} \circ \epsilon^{-1}$ does not
satisfy the correct boundary condition.

Indeed, one can produce counterexamples to the claim. We are indebted to the
reviewer who suggested the following counterexample.

\begin{lemma}
  There exists a $\VTY$-small site $\prn{\CCat,J}$ such that $i^*\El$ does not
  satisfy \textbf{(U8)}.
\end{lemma}
\begin{proof}
  Define $\CCat = \brc{0 \le 1} \times \brc{0 \le 1}$ and let $J$ be such
  that $\prn{0,1}$, $\prn{1,0}$, and $\prn{1,1}$ have no non-trivial covers
  while $\prn{0,0}$ is covered by the empty sieve. The sheafification functor
  $\Mor[i^*]{\Psh{\CCat}}{\Sh{\CCat,J}}$ sends a presheaf
  $\Mor[X]{\OpCat{\CCat}}{\SET}$ to the following sheaf:
  \begin{alignat*}{2}
    \prn{i^*X}_{\prn{0,0}} &= \TermObj{} &\qquad \prn{i^*X}_{\prn{0,1}} &= X_{\prn{0,1}}
    \\
    \prn{i^*X}_{\prn{1,0}} &= X_{\prn{1,0}} & \prn{i^*X}_{\prn{1,1}} &= X_{\prn{1,1}}
  \end{alignat*}

  In particular, both ${i^*\TY}_{0,1}$ and ${i^*\TY}_{0,1}$ are isomorphic to
  $\Ob\prn{\ArrCat{\VTY}}$. Let us consider the arrows
  $\Mor{\InitObj{}}{\TermObj{}}$ and $\Mor{\TermObj{}}{\mathbf{2}}$ in $\VTY$
  and write $\Mor[f_{01}]{\Yo{0,1}}{\TY}$ for the map induced by the former and
  $\Mor[f_{10}]{\Yo{1,0}}{\TY}$ for the map induced by the latter. We note that
  $f_{01}$ and $f_{10}$ classify $\IdArr{\Yo{0,1}}$ and $\IdArr{\Yo{1,0}}$,
  respectively.

  Fix $P = \Yo{1,0} \Pushout{\Yo{0,0}} \Yo{0,1}$ and notice that $i^*P$ is the
  coproduct $i^*\Yo{0,1} \Pushout{} i^*\Yo{1,0}$. We therefore amalgamate
  $i^*f_{01}$ and $i^*f_{10}$ into a single morphism:
  \[
    \Mor[f = i^*f_{01} \Pushout{} i^*f_{10}]{i^*P}{i^*\TY}
  \]

  Next, observe that $f$ classifies $\Mor{i^*P}{i^*P}$ and this
  family extends along the monomorphism $\Mor|>->|[m]{i^*P}{\TermObj{}}$ to the
  family $\Mor{\TermObj{}}{\TermObj{}}$. However, there is no morphism
  classifying $\Mor{\TermObj{}}{\TermObj{}}$ that restricts to $f$ along
  $m$. Such a morphism would correspond to a $\VTY$-small presheaf
  $\Mor[G]{\OpCat{\CCat}}{\VTY}$. If such a presheaf were to restrict correctly
  to both $i^*{f_{01}}$ and $i^*{f_{10}}$ correctly, it would need to satisfy
  $G_{00} = \mathbf{2}$ and $G_{00} = \TermObj{}$, which is an impossibility.
\end{proof}

\section{Generalities on descent and \texorpdfstring{$\kappa$}{kappa}-compactness}
\label{sec:descent}

In preparation for our universe construction, we recall notions of descent and compactness together
and develop the required theory. Accordingly, fix a Grothendieck topos
$\ECat$. Unless specifically mentioned otherwise, we shall assume that all
regular cardinals are infinite.

In \cref{sec:intro} we observed that the natural notion of morphism between generic maps $\pi$,
$\rho$ for a universe is not a merely a commuting square $\Mor{\pi}{\rho}$ but rather a
\emph{cartesian} square; only the latter ensures that a family classified by $\pi$ is also
classified by $\rho$. While $\ArrCat{\ECat}$ readily adopts the essential characteristics of $\ECat$
(for instance, it is also a Grothendieck topos) the wide subcategory restricting to cartesian
squares is not even cocomplete. We first recall the descent properties of $\ECat$ to show that this
subcategory is closed under coproducts, filtered colimits and pushouts along monomorphisms
(\cref{lem:colim-in-cart-arr}).

In \cref{sec:hs-and-s} we worked with a universe of presheaves valued in small sets. While
convenient, this definition of smallness relies on a choice of presentation of a topos as a
particular category of presheaves. Under mild restrictions, however, $\PrCls\Sub{\VTY}$ coincides
with the class of relatively \emph{compact} morphisms. Compactness is a `presentation-invariant'
notion and thereby readily available in $\ECat$. We recall the theory of $\kappa$-compactness in
$\ECat$. We show that for sufficiently large $\kappa$, the class of relatively $\kappa$-compact
morphisms form a universe satisfying \textbf{(U1--7)} closed under certain colimits
(\cref{lem:colim-rel-pres,thm:compact-universe}).

\subsection{Descent in a Grothendieck topos}

\begin{definition}\label[definition]{def:descent}
  A diagram $\Mor[J]{\DCat}{\ECat}$ is said to satisfy \emph{descent} when for
  any cartesian natural transformation $\Mor[\alpha]{K}{J}$, the induced
  morphisms $\Mor{\alpha_d}{\Colim{d\in\DCat}{\alpha\Sub{d}}}$ in $\ArrCat{\ECat}$
  are cartesian for each $d\in \DCat$, \ie the following square is cartesian:
  \[
    \DiagramSquare{
      nw/style = pullback,
      width = 2.5cm,
      nw = K\prn{d},
      sw = J\prn{d},
      ne = \Colim{\DCat}{K},
      se = \Colim{\DCat}{J},
    }
  \]
\end{definition}

\begin{remark}
  We caution the reader that the usages of the word \emph{descent} here and in
  \textbf{(U7)} are not identical. A diagram $\Mor[F]{\DCat}{\ECat}$ satisfying
  descent essentially stipulates that we may fully characterize families over
  $\Colim{} F$ by considering cartesian diagrams of families over $F\prn{i}$. In
  particular, all categorical structures from the latter \emph{descend} to
  the former. In contrast, \textbf{(U7)} states that a specific property---that
  of being $\Cls$-small---is amenable to such descent arguments since, in
  particular, pullback along $\Mor{\Coprod{i} F\prn{i}}{\Colim{F}}$ induces a
  suitable cartesian epimorphism.
\end{remark}

We will often speak metonymically of a colimit having descent, to mean that the
diagram of which it is the colimit has descent.

\begin{notation}
  Write $\CartArrCat{\ECat}\subseteq\ArrCat{\ECat}$ for
  the wide subcategory spanned by cartesian maps.
\end{notation}

\begin{lemma}\label[lemma]{lem:colim-in-cart-arr}
  Let $\Mor[J]{\DCat}{\CartArrCat{\ECat}}$ be a diagram whose base
  $\Mor[J_1]{\DCat}{\ECat}$ satisfies descent in the sense of
  \cref{def:descent}. Then the colimit $\Colim{\DCat}{J}$ exists in
  $\CartArrCat{\ECat}$.
\end{lemma}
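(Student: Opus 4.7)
The plan is to promote the pointwise colimit from $\ArrCat{\ECat}$ into $\CartArrCat{\ECat}$, using descent once to see that the candidate lies there and once more (in conjunction with universality of colimits) to verify its universal property. Write $J_0, J_1 \colon \DCat \to \ECat$ for the domain and codomain components of $J$, so that $J$ itself corresponds to a cartesian natural transformation $J_0 \Rightarrow J_1$. Since $\ArrCat{\ECat}$ is cocomplete with colimits computed componentwise, I form $L \coloneqq \Colim{\DCat}{J_0}$ and $M \coloneqq \Colim{\DCat}{J_1}$ together with the induced morphism $\bar J \colon L \to M$.

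A single application of the descent hypothesis (\cref{def:descent}) to the cartesian transformation $J_0 \Rightarrow J_1$ shows that each coprojection square $\prn{J_0(d) \to L,\; J_1(d) \to M}$ is cartesian; hence $\bar J$ is an object of $\CartArrCat{\ECat}$ and the coprojections from $J(d)$ to $\bar J$ are morphisms there.

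For the universal property, fix a cocone of cartesian squares $\sigma_d \colon J(d) \to g$ with $g \colon X \to Y$. The underlying cocone in $\ArrCat{\ECat}$ determines a unique mediator $\bar\sigma \colon \bar J \to g$; the task is to check that it is cartesian. Set $P \coloneqq X \times_Y M$ and $K(d) \coloneqq J_1(d) \times_Y X$. Cartesianness of each $\sigma_d$ yields natural isomorphisms $K(d) \cong J_0(d)$ identifying the cartesian transformation $K \Rightarrow J_1$ with $J_0 \Rightarrow J_1$; in particular $\Colim{\DCat}{K} \cong L$. By universality of colimits in the Grothendieck topos $\ECat$, pullback along $M \to Y$ preserves the colimit $M = \Colim{\DCat}{J_1}$, so $P \cong \Colim{\DCat}{K} \cong L$ over $M$, which is precisely the statement that $\bar\sigma$ is a cartesian square.

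The main obstacle is the last identification: descent of $J_1$ alone gives cartesian coprojections $K(d) \to \Colim{\DCat}{K}$ over $J_1(d) \to M$, but concluding $\Colim{\DCat}{K} \cong P$ requires pullback along $M \to Y$ to commute with the $\DCat$-shaped colimit defining $M$. This is automatic in a Grothendieck topos (colimits are universal) and is the sole step invoking topos-theoretic structure beyond descent of $J_1$; the remainder is bookkeeping, and uniqueness of $\bar\sigma$ in $\CartArrCat{\ECat}$ is inherited from $\ArrCat{\ECat}$ since the forgetful functor is faithful.
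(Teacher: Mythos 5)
Your proof is correct, and its skeleton coincides with the paper's: form the colimit in $\ArrCat{\ECat}$, observe that descent of $J_1$ applied to the cartesian transformation $J_0 \Rightarrow J_1$ makes the coprojection squares cartesian, and then check that the mediating map against any cartesian cocone is itself cartesian. The two arguments diverge only in that last verification. The paper covers $\Colim{\DCat}{J_1}$ by the coproduct $\Coprod{\DCat}{J_1}$ and uses descent of cartesian squares along effective epimorphisms: the mediator is cartesian iff its precomposite with the cover is, the cover square being cartesian by the previous step, and that precomposite decomposes as a coproduct of the given cartesian squares $\sigma_d$. You instead invoke universality of colimits for the whole $\DCat$-shaped diagram: $X \times_Y M$ is the colimit of the pullbacks $K(d) = J_1(d)\times_Y X$, each naturally identified with $J_0(d)$ by cartesianness of the cocone. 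Both routes ultimately rest on universality of colimits (the paper's coproduct step uses it for $\Coprod{\DCat}{J_1}$); yours avoids the detour through the epi cover, at the cost of having to confirm that the composite isomorphism $L \cong X \times_Y M$ really is the canonical gap map of $\bar\sigma$ --- which does follow by checking agreement on each coprojection $J_0(d) \to L$ and using uniqueness of maps out of the colimit, so the ``bookkeeping'' you defer is genuinely routine. A pleasant by-product of your phrasing is that it makes visible where descent is actually needed: only for cartesianness of the coprojections; the mediator step uses universality alone.
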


\begin{proof}
  We may first compute the colimit of $J$ in the ordinary arrow category
  $\ArrCat{\ECat}$. Next we must show that each map
  $\Mor{J\prn{d}}{\Colim{\DCat}{J}}$ is cartesian, but this is exactly the
  content of $J_1$ enjoying descent. We must now check that the factorizations induced by the universal property of this colimit in $\ArrCat{\ECat}$
  are cartesian.

  Fixing a cartesian natural transformation $\Mor[h]{J}{\brc{X}}$, we must check
  that the induced map $\Mor[h^\sharp]{\Colim{\DCat}{J}}{X}$ is cartesian. We
  may cover $\Colim{\DCat}{J}$ by the coproduct $\Coprod{\DCat}{J}$; by the
  descent property of effective epimorphisms, it suffices to check
  that $\Mor|->>|{\Coprod{\DCat}{J}}{\Colim{\DCat}{J}}$ and
  $\Mor{\Coprod{\DCat}{J}}{X}$ are both cartesian.
  To see that $\Mor|->>|{\Coprod{\DCat}{J}}{\Colim{\DCat}{J}}$ is cartesian, it
  suffices to recall that each
  $\Mor{J\prn{d}}{\Colim{\DCat}{J}}$ is cartesian by assumption. Likewise to
  check that $\Mor{\Coprod{\DCat}{J}}{X}$ is cartesian, it suffices to recall
  our assumption that each component $\Mor[h_d]{J\prn{d}}{X}$ is cartesian.
\end{proof}

While all diagrams satisfy descent in an $\infty$-topos, only some diagrams in
$1$-topos theory have descent. The following classes of colimits do enjoy
descent:

\begin{enumerate}

  \item Coproducts enjoy descent: this is one phrasing of the traditional
    disjointness condition that for each $i\not= j$, the fiber product
    $X_i\times\Sub{\Coprod{k}X_k}X_j$ is the initial object:
    \[
      \DiagramSquare{
        nw = \InitObj{\ECat},
        ne = X_i,
        se = \Coprod{k}{X_k},
        sw = X_j,
        nw/style = pullback,
      }
    \]

  \item While pushouts do not generally enjoy descent (see
    \citet[Example~2.3]{rezk:2010} for a counterexample), pushouts along
    monomorphisms do enjoy descent; this property is commonly referred to as
    \emph{adhesivity}~\citep{garner-lack:2012}.

  \item Filtered colimits enjoy descent.

\end{enumerate}

The final condition (verified in \cref{lem:filtered-colimit-descent}) is a
generalization of the \emph{exhaustivity} condition identified by
\citep{shulman:2015:elegant}.

\begin{lemma}\label[lemma]{lem:cosl-proj-final}
  Let $\DCat$ be a filtered category~\citep[Definition
  1.4]{adamek-rosicky:1994}; for any object $d\in\DCat$, the coslice projection
  functor $\Mor[p]{\CoSl{\DCat}{d}}{\DCat}$ is \emph{final}~\citep[Section
  IX.3]{maclane:1998} such that restricting any diagram $\Mor{\DCat}{\ECat}$ to
  the coslice does not change its colimit.
\end{lemma}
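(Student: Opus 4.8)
The plan is to invoke the classical characterization of final functors: restriction along a functor preserves all colimits precisely when, for every object $e$ of its codomain, the associated comma category is nonempty and connected. Applied to $\Mor[p]{\CoSl{\DCat}{d}}{\DCat}$, the relevant comma category $e\downarrow p$ has as objects the pairs $(\Mor[a]{d}{x},\Mor[\phi]{e}{x})$ with $x:\DCat$, and as morphisms $(\Mor[a]{d}{x},\phi)\to(\Mor[b]{d}{y},\psi)$ the maps $\Mor[f]{x}{y}$ of $\DCat$ satisfying $f\circ a=b$ and $f\circ\phi=\psi$. Both required conditions then reduce to the two closure properties defining a filtered category: every finite family of objects has an upper bound, and every finite parallel family of morphisms can be coequalized by a single morphism.

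For nonemptiness I would apply filteredness to $d$ and $e$, obtaining $z:\DCat$ with maps $\Mor{d}{z}$ and $\Mor{e}{z}$; these data constitute an object of $e\downarrow p$. For connectedness I would take arbitrary objects $(\Mor[a]{d}{x},\phi)$ and $(\Mor[b]{d}{y},\psi)$, pick an upper bound $w$ of $x,y$ via $\Mor[u]{x}{w}$ and $\Mor[v]{y}{w}$, and then choose a single $\Mor[t]{w}{w'}$ coequalizing \emph{both} parallel pairs $u\circ a,\,v\circ b:d\rightrightarrows w$ and $u\circ\phi,\,v\circ\psi:e\rightrightarrows w$. Setting $c=t\circ u\circ a=t\circ v\circ b$ and $\theta=t\circ u\circ\phi=t\circ v\circ\psi$, the pair $(\Mor[c]{d}{w'},\theta)$ is an object of $e\downarrow p$ admitting morphisms $t\circ u$ and $t\circ v$ from the two given objects, so they lie in one connected component. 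This establishes finality, whence the colimit over $\CoSl{\DCat}{d}$ of any restricted diagram agrees with its colimit over $\DCat$.

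The one point needing care is that a single $t$ can be chosen to coequalize both parallel pairs simultaneously; this is legitimate because filteredness yields a cocone under any finite diagram, here the one built from $d$, $e$, $w$ and the four maps $u\circ a$, $v\circ b$, $u\circ\phi$, $v\circ\psi$. (One should also bear in mind that ``final'' is meant in the colimit-preserving sense — ``cofinal'' in some texts — which is exactly what the statement asserts.) Beyond this the argument is entirely routine, and could alternatively be dispatched by citing the finality of coslice projections out of a filtered category as a standard fact.
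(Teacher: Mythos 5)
Your proof is correct and follows essentially the same route as the paper's: both verify finality via the standard criterion that each comma category $e\downarrow p$ is nonempty and connected, using filteredness to produce an upper bound and then a single morphism coequalizing the relevant parallel pairs. The only cosmetic difference is that the paper takes the upper bound inside $\CoSl{\DCat}{d}$ (using that the coslice is itself filtered), which absorbs your coequalization of the two $d$-legs, whereas you take it in $\DCat$ and coequalize both the $d$-legs and the $e$-legs explicitly; you also make the nonemptiness check explicit where the paper leaves it implicit in ``connected.''
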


\begin{proof}
  We must show that for any object $e\in\DCat$, the comma category $e\downarrow p$
  is connected. Fixing $x,y \in \CoSl{\DCat}{d}$ and $\Mor[i]{e}{p\prn{x}}$ and
  $\Mor[j]{e}{p\prn{y}}$, we must find a zig-zag of morphisms connecting $i$ to
  $j$ in $e\downarrow p$.
  Because $\CoSl{\DCat}{d}$ is filtered, we may find $w\in \CoSl{\DCat}{d}$ with
  $\Mor[m]{x}{w}$ and $\Mor[n]{y}{w}$. We have two triangles that cannot yet be
  pasted into a zig-zag:
  \[
    \begin{tikzpicture}[diagram,baseline=(p/w.base)]
      \node (e) {$e$};
      \node (p/w) [below = of e] {$p\prn{w}$};
      \node (p/x) [left = 2.5cm of p/w] {$p\prn{x}$};
      \node (p/y) [right = 2.5cm of p/w] {$p\prn{y}$};
      \draw[->,bend right=20] (e) to node [sloped,above] {$i$} (p/x);
      \draw[->,bend left=20] (e) to node [sloped,above] {$j$} (p/y);
      \draw[->,bend right=30] (e) to (p/w);
      \draw[->,bend left=30] (e) to (p/w);
      \node [between = e and p/w] {$\puncture$};
      \draw[->] (p/x) to node [below] {$p\prn{m}$} (p/w);
      \draw[->] (p/y) to node [below] {$p\prn{n}$} (p/w);
    \end{tikzpicture}
  \]

  Using the fact that $\DCat$ is filtered, we may find an arrow
  $\Mor{p\prn{w}}{z}$ that unites the two morphisms $\Mor{e}{p\prn{w}}$;
  because $w$ is under $d$ so is $z$, so in fact we have an arrow
  $\Mor[o]{w}{z}$ in $\CoSl{\DCat}{d}$ with which we may complete the connection
  between $i$ and $j$:
  \[
    \begin{tikzpicture}[diagram,baseline=(p/w.base)]
      \node (e) {$e$};
      \node (p/z) [below = of e] {$p\prn{z}$};
      \node (p/w) [below = 2.5cm of p/z] {$p\prn{w}$};
      \node (p/x) [left = 2.5cm of p/z] {$p\prn{x}$};
      \node (p/y) [right = 2.5cm of p/z] {$p\prn{y}$};
      \draw[->,bend right=20] (e) to node [sloped,above] {$i$} (p/x);
      \draw[->,bend left=20] (e) to node [sloped,above] {$j$} (p/y);
      \draw[->] (p/x) to (p/z);
      \draw[->] (p/y) to (p/z);
      \draw[->] (p/x) to node [sloped,below] {$p\prn{m}$} (p/w);
      \draw[->] (p/y) to node [sloped,below] {$p\prn{n}$} (p/w);
      \draw[->] (e) to (p/z);
      \draw[->] (p/w) to node [upright desc] {$p\prn{o}$} (p/z);
    \end{tikzpicture}
    \qedhere
  \]
\end{proof}

\cref{lem:filtered-colimit-descent} below is verified in greater generality by
\citet[Proposition~5.10]{garner-lack:2012:lex-colimits}; we provide a direct proof for
expository purposes.

\begin{lemma}\label[lemma]{lem:filtered-colimit-descent}
  Any filtered diagram $\Mor[F]{\DCat}{\ECat}$ enjoys descent.
\end{lemma}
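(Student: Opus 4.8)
The plan is to verify descent for a filtered diagram $F \colon \DCat \to \ECat$ directly from the definition, using the exactness properties of a Grothendieck topos together with the finality result just established in \cref{lem:cosl-proj-final}. So fix a cartesian natural transformation $\alpha \colon G \to F$; I must show that for each $d \colon \DCat$ the naturality square
\[
  \DiagramSquare{
    nw/style = pullback,
    width = 2.5cm,
    nw = G\prn{d},
    sw = F\prn{d},
    ne = \Colim{\DCat}{G},
    se = \Colim{\DCat}{F},
  }
\]
is a pullback. The first move is to replace $\DCat$ by the coslice $\CoSl{\DCat}{d}$: by \cref{lem:cosl-proj-final} the coslice projection is final, so neither $\Colim{\DCat}{F}$ nor $\Colim{\DCat}{G}$ changes when we restrict along it, and it suffices to compute the pullback of $\Colim{\DCat}{F} \leftarrow F(d)$ against $\Colim{\DCat}{G} \to \Colim{\DCat}{F}$ within the coslice diagram. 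This is the key reduction: in $\CoSl{\DCat}{d}$ the object $d$ (equipped with its identity arrow) is initial, so for every object $e$ of the coslice there is a canonical arrow $d \to e$, hence a canonical component $F(d) \to F(e)$ compatible with the colimit cocone.

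Now I compute the pullback in $\ECat$. Filtered colimits in a Grothendieck topos commute with finite limits, so the pullback
\[
  \Colim{\CoSl{\DCat}{d}}{G} \times\Sub{\Colim{\CoSl{\DCat}{d}}{F}} F(d)
\]
may be computed as the filtered colimit over $e \colon \CoSl{\DCat}{d}$ of the pullbacks $G(e) \times\Sub{F(e)} F(d)$, where the map $F(d) \to F(e)$ is the canonical one described above. But $\alpha$ is cartesian, so each square
\[
  \DiagramSquare{
    nw/style = pullback,
    width = 2.5cm,
    nw = G\prn{d},
    sw = F\prn{d},
    ne = G\prn{e},
    se = F\prn{e},
  }
\]
is already a pullback (it is the $\alpha$-naturality square along $d \to e$); therefore each $G(e) \times\Sub{F(e)} F(d) \cong G(d)$, and the transition maps of the filtered system are all isomorphisms. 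Hence the filtered colimit is $G(d)$ itself, and one checks that the resulting comparison map $G(d) \to \Colim{\DCat}{G} \times\Sub{\Colim{\DCat}{F}} F(d)$ is the canonical one, so the original square is a pullback.

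The main obstacle — really the only subtle point — is making the interchange of filtered colimit and pullback rigorous: one must know that filtered colimits in $\ECat$ are \emph{universal} (stable under pullback) and that they commute with the finite limit computing the fiber product, and one must be careful that the diagram $e \mapsto G(e)\times_{F(e)} F(d)$ is genuinely a functor on $\CoSl{\DCat}{d}$ with the transition maps induced functorially from those of $G$ and $F$. Both facts are standard consequences of $\ECat$ being a Grothendieck topos (filtered colimits are exact), and the functoriality is immediate once we have passed to the coslice so that $d$ is initial. Everything else is bookkeeping with the universal properties of the two colimits.
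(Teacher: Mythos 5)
Your proof is correct and follows essentially the same route as the paper's: reduce to the coslice $\CoSl{\DCat}{d}$ via \cref{lem:cosl-proj-final}, express the fiber product as a filtered colimit of the pullbacks $G(e)\times_{F(e)}F(d)$ using commutation of filtered colimits with finite limits, and observe that each of these is $G(d)$ by cartesianness of $\alpha$. The only cosmetic difference is that the paper phrases the reduction by rewriting $G(d)$ and $F(d)$ as colimits of constant diagrams over the coslice before invoking the commutation, whereas you compute the pullback of the colimits directly; these are the same argument.
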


\begin{proof}
  We fix a cartesian natural transformation $\Mor{G}{F}$ and
  must check for each $d\in\DCat$ the following square is cartesian:
  \[
    \DiagramSquare{
      width = 2.5cm,
      nw/style = dotted pullback,
      ne = \Colim{\DCat}{G},
      se = \Colim{\DCat}{F},
      nw = G\prn{d},
      sw = F\prn{d},
    }
  \]

  Because $\DCat$ is filtered, we may replace the indexing category with the coslice
  $\CoSl{\DCat}{d}$ by \cref{lem:cosl-proj-final}, noting that
  $\Colim{\CoSl{\DCat}{d}}{H} = \Colim{\DCat}{H}$ for any diagram $\Mor[H]{\DCat}{\ECat}$.
  \begin{equation}\label[diagram]{diag:filt-colim-descent:1}
    \DiagramSquare{
      width = 2.5cm,
      nw/style = dotted pullback,
      ne = \Colim{\CoSl{\DCat}{d}}{G},
      se = \Colim{\CoSl{\DCat}{d}}{F},
      nw = G\prn{d},
      sw = F\prn{d},
    }
  \end{equation}

  We observe that any object is the colimit of the constant
  $\CoSl{\DCat}{d}$-diagram it determines as $\CoSl{\DCat}{d}$ is connected;
  therefore we may rewrite \cref{diag:filt-colim-descent:1} as follows:
  \[
    \DiagramSquare{
      width = 3.5cm,
      nw/style = dotted pullback,
      ne = \Colim{\CoSl{\DCat}{d}}{G},
      se = \Colim{\CoSl{\DCat}{d}}{F},
      nw = \Colim{\CoSl{\DCat}{d}}\brc{G\prn{d}},
      sw = \Colim{\CoSl{\DCat}{d}}\brc{F\prn{d}},
    }
  \]

  Recall that filtered colimits commute with finite limits, so it suffices to check that the
  following square below is cartesian for $d \to e$:
  \begin{equation}\label[diagram]{diag:filt-colim-descent:3}
    \DiagramSquare{
      nw/style = dotted pullback,
      nw = G\prn{d},
      ne = G\prn{e},
      sw = F\prn{d},
      se = F\prn{e},
    }
  \end{equation}

  But \cref{diag:filt-colim-descent:3} is cartesian because we have assumed that $\Mor{G}{F}$ is cartesian.
\end{proof}

We recall the notion of \emph{ideal diagram} from \citet{awodey-forssell:2005}.

\begin{definition}
  An \emph{ideal diagram} in a category $\ECat$ is a functor
  $\Mor{\DCat}{\ECat}$ where $\DCat$ is a small filtered preorder and the image
  of each $d\leq e$ is a monomorphism in $\ECat$.
\end{definition}

\begin{lemma}\label[lemma]{lem:filtered-colim-inj-mono}
  If $\Mor[F]{\DCat}{\ECat}$ is an ideal diagram, then each edge $\Mor{F\prn{d}}{\Colim{\DCat}{F}}$ in its colimit cocone is a monomorphism.
\end{lemma}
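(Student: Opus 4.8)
The plan is to show that the comparison map $\Mor{F\prn{d}}{\Colim{\DCat}{F}}$ is monic by exhibiting its kernel pair and checking that the two projections coincide, exploiting the exactness of filtered colimits in the Grothendieck topos $\ECat$.

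First I would invoke \cref{lem:cosl-proj-final} to replace the indexing category $\DCat$ by the coslice $\CoSl{\DCat}{d}$, which is again a small filtered preorder, now with $\IdArr{d}$ as an initial object, and for which the colimit of $F$ is unchanged. In particular the constant diagram at $F\prn{d}$ over this (connected) category has colimit $F\prn{d}$, so the comparison map is presented as the colimit over $e : \CoSl{\DCat}{d}$ of the structure maps $\Mor{F\prn{d}}{F\prn{e}}$, each of which is a monomorphism by the hypothesis that $F$ is an ideal diagram.

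Next I would compute the kernel pair. Since filtered colimits commute with finite limits in $\ECat$, the pullback $F\prn{d}\times\Sub{\Colim{\DCat}{F}}F\prn{d}$ is the colimit over $e$ of the pullbacks $F\prn{d}\times\Sub{F\prn{e}}F\prn{d}$, in which at stage $e$ both legs of the cospan are the monomorphism $\Mor{F\prn{d}}{F\prn{e}}$. A monomorphism pulled back along itself has both projections equal to the identity, so each $F\prn{d}\times\Sub{F\prn{e}}F\prn{d}$ is canonically $F\prn{d}$ with both projections $\IdArr{F\prn{d}}$, compatibly as $e$ varies. Passing to the colimit, the kernel pair of $\Mor{F\prn{d}}{\Colim{\DCat}{F}}$ is $F\prn{d}$ with both projections the identity; since a map is monic exactly when its kernel pair has coinciding projections, this finishes the argument.

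The one step that is not purely formal is the interchange of the filtered colimit with the pullback defining the kernel pair; this is the standard fact that filtered colimits are exact in a Grothendieck topos, already used in the proof of \cref{lem:filtered-colimit-descent}, so I do not anticipate a genuine obstacle. The remaining work is the bookkeeping that the essentially constant diagram $e\mapsto F\prn{d}\times\Sub{F\prn{e}}F\prn{d}$ has colimit $F\prn{d}$ with its two projections identified with $\IdArr{F\prn{d}}$.
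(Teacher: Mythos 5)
Your proposal is correct and follows essentially the same route as the paper's: pass to the coslice $\CoSl{\DCat}{d}$ via \cref{lem:cosl-proj-final}, use commutation of filtered colimits with finite limits to express the kernel pair as a filtered colimit of the kernel pairs of the structure maps $\Mor{F\prn{d}}{F\prn{e}}$, and observe each of these is trivial since those maps are monic. The paper phrases the monicity condition as a certain square being cartesian rather than as the kernel pair having coinciding projections, but these are the same check.
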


\begin{proof}
  This follows for essentially the same reason as
  \cref{lem:filtered-colimit-descent}. Fixing $d\in\DCat$, to see that
  $\Mor{F\prn{d}}{\Colim{\DCat}{F}}$ is a monomorphism it suffices to check
  that the following diagram is cartesian:
  \begin{equation}\label[diagram]{diag:colim-inj-mono:0}
    \DiagramSquare{
      nw/style = dotted pullback,
      north/style = {-,double},
      west/style = {-,double},
      ne = F\prn{d},
      se = \Colim{\DCat}{F},
      sw = F\prn{d},
      nw = F\prn{d},
      width = 2.5cm,
    }
  \end{equation}

  Because $\DCat$ is filtered, by \cref{lem:cosl-proj-final} we may
  replace \cref{diag:colim-inj-mono:0} as follows:
  \[
    \DiagramSquare{
      nw/style = dotted pullback,
      north/style = {-,double},
      west/style = {-,double},
      se = \Colim{\CoSl{\DCat}{d}}{F},
      sw = \Colim{\CoSl{\DCat}{d}}{\brc{F\prn{d}}},
      nw = \Colim{\CoSl{\DCat}{d}}{\brc{F\prn{d}}},
      ne = \Colim{\CoSl{\DCat}{d}}{\brc{F\prn{d}}},
      width = 4.25cm,
    }
  \]

  Because filtered colimits commute with finite limits, it suffices to check
  that each of the following squares is cartesian for $e\geq d$:
  \[
    \DiagramSquare{
      nw/style = dotted pullback,
      north/style = {-,double},
      west/style = {-,double},
      se = F\prn{e},
      sw = F\prn{d},
      nw = F\prn{d},
      ne = F\prn{d},
    }
  \]

  But we have already assumed $\Mor{F\prn{d}}{F\prn{e}}$ to be a monomorphism.
\end{proof}

\begin{remark}
  For any regular cardinal $\kappa \ge \omega$, a $\kappa$-filtered
  diagram~\citep[Remark 1.21]{adamek-rosicky:1994} is filtered. Accordingly,
  both \cref{lem:filtered-colimit-descent,lem:filtered-colim-inj-mono} hold for
  $\kappa$-filtered diagrams.
\end{remark}

\begin{lemma}\label[lemma]{lem:good-colim-preserves-monos}
  Let $\Mor[F,G]{\DCat}{\ECat}$ be two diagrams such that $G$ satisfies descent, and
  let $\Mor|>->|{F}{G}$ be a cartesian monomorphism. Then the induced map
  $\Mor{\Colim{\DCat}{F}}{\Colim{\DCat}{G}}$ is a monomorphism.
\end{lemma}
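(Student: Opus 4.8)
Write $\Mor[\alpha]{F}{G}$ for the given cartesian monomorphism — so each component $\Mor|>->|[\alpha_d]{F\prn{d}}{G\prn{d}}$ is monic and every naturality square of $\alpha$ is cartesian — and write $\Mor[j]{\Colim{\DCat}{F}}{\Colim{\DCat}{G}}$ for the induced comparison map. The plan is to imitate the proof of \cref{lem:colim-in-cart-arr}: rather than argue about $j$ directly, I will pull it back along the canonical effective epimorphism $\Mor|->>|[e]{\Coprod{\DCat}{G}}{\Colim{\DCat}{G}}$ and check that the result is a monomorphism. This suffices because monomorphisms are reflected by pullback along an effective epimorphism in a topos — equivalently, pullback along an effective epimorphism is conservative and preserves finite limits, hence reflects monomorphisms — which is one further instance of the same descent principle for effective epimorphisms already used in \cref{lem:colim-in-cart-arr,lem:filtered-colimit-descent}.

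To identify the pullback of $j$ along $e$, apply the descent hypothesis on $G$ to the cartesian transformation $\alpha$ (\cref{def:descent}): for each $d : \DCat$ the square with left leg $\alpha_d$, right leg $j$, and horizontal legs the colimit coprojections is cartesian, so $F\prn{d} \cong G\prn{d} \times\Sub{\Colim{\DCat}{G}} \Colim{\DCat}{F}$ with projection to $G\prn{d}$ equal to $\alpha_d$. Pullback in a Grothendieck topos preserves coproducts, so
\[
  \Coprod{\DCat}{G} \times\Sub{\Colim{\DCat}{G}} \Colim{\DCat}{F}
  \;\cong\;
  \Coprod{d:\DCat}{\prn{G\prn{d} \times\Sub{\Colim{\DCat}{G}} \Colim{\DCat}{F}}}
  \;\cong\;
  \Coprod{\DCat}{F},
\]
and under this identification the pullback of $j$ along $e$ is the coproduct map $\Coprod{d:\DCat}{\alpha_d}$.

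It then remains to see that $\Coprod{d:\DCat}{\alpha_d}$ is monic, and this follows from disjointness of coproducts in $\ECat$: its kernel pair is $\Coprod{d:\DCat}{\prn{F\prn{d} \times\Sub{G\prn{d}} F\prn{d}}} \cong \Coprod{\DCat}{F}$ — the off-diagonal fibre products vanishing by disjointness, the diagonal ones collapsing because each $\alpha_d$ is monic — with both projections the identity. Hence the pullback of $j$ along $e$ is monic, and therefore so is $j$. The only points that need care are the identification of this pullback with $\Coprod{d:\DCat}{\alpha_d}$, which is exactly where the descent hypothesis on $G$ is used, and the legitimacy of testing monomorphy after pulling back along the cover $\Coprod{\DCat}{G} \to \Colim{\DCat}{G}$; but the latter is precisely the kind of descent argument for effective epimorphisms already deployed in \cref{lem:colim-in-cart-arr}, so neither presents a genuinely new difficulty.
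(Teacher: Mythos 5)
Your proof is correct and is essentially the paper's own argument in a slightly different packaging: where you pull back the comparison map along the effective epimorphism $\Mor|->>|{\Coprod{\DCat}{G}}{\Colim{\DCat}{G}}$, identify the result with $\Coprod{d}{\alpha_d}$ via descent for $G$ together with universality of coproducts, and then invoke conservativity of pullback along covers to reflect monomorphy, the paper checks that the kernel-pair square of the comparison map becomes cartesian after covering $\Colim{\DCat}{F}$ by $\Coprod{\DCat}{F}$ and decomposes it into the same two cartesian squares. The ingredients---descent for $G$, disjointness/universality of coproducts to see that a coproduct of monomorphisms is monic, and descent along effective epimorphisms---are identical, so no further comparison is needed.
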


\begin{proof}
  We need to check that the following square is cartesian:
  \[
    \DiagramSquare{
      nw/style = dotted pullback,
      ne = \Colim{\DCat}{F},
      se = \Colim{\DCat}{G},
      sw = \Colim{\DCat}{F},
      nw = \Colim{\DCat}{F},
      width = 3cm,
      north/style = {-,double},
      west/style = {-,double},
    }
  \]

  We can cover $\Colim{\DCat}{F}$ by $\Coprod{\DCat}{F}$; by descent of
  cartesian squares along covers, it suffices to prove that the outer square
  below is cartesian:
  \begin{equation}\label[diagram]{diag:colim-pres-mono:1}
    \begin{tikzpicture}[diagram,baseline=(l/sw.base)]
      \SpliceDiagramSquare<l/>{
        nw/style = pullback,
        ne/style = dotted pullback,
        se = \Colim{\DCat}{F},
        ne = \Colim{\DCat}{F},
        nw = \Coprod{\DCat}{F},
        sw = \Coprod{\DCat}{F},
        east/style = {-,double},
        west/style = {-,double},
        south/style = ->>,
        north/style = ->>,
        width = 2.5cm,
      }
      \SpliceDiagramSquare<r/>{
        glue = west,
        glue target = l/,
        ne = \Colim{\DCat}{F},
        se = \Colim{\DCat}{G},
        width = 3cm,
        north/style = {-,double},
      }
      \node (cp/G) [below = 1.5cm of l/se] {$\Coprod{\DCat}{G}$};
      \draw[>->] (l/sw) to (cp/G);
      \draw[->>] (cp/G) to (r/se);
    \end{tikzpicture}
  \end{equation}

  We have factored the downstairs map of \cref{diag:colim-pres-mono:1} using
  the universal property of the coproduct.
  Our strategy to show that \cref{diag:colim-pres-mono:1} is cartesian is to
  exhibit it as the pasting of two cartesian squares, as hinted by our
  factorization. In particular,
  by pasting pullbacks it is enough to prove that the right-hand square below is cartesian:
  \begin{equation}\label[diagram]{diag:colim-pres-mono:2}
    \begin{tikzpicture}[diagram,baseline=(l/sw.base)]
      \SpliceDiagramSquare<l/>{
        nw/style = pullback,
        ne/style = dotted pullback,
        east/style = >->,
        south/style = >->,
        north/style = {-,double},
        west/style = {-,double},
        nw = \Coprod{\DCat}{F},
        sw = \Coprod{\DCat}{F},
        ne = \Coprod{\DCat}{F},
        se = \Coprod{\DCat}{G},
        width = 2.5cm,
      }
      \SpliceDiagramSquare<r/>{
        glue = west,
        glue target = l/,
        ne = \Colim{\DCat}{F},
        se = \Colim{\DCat}{G},
        width = 2.5cm,
      }
    \end{tikzpicture}
  \end{equation}

  The left-hand square of \cref{diag:colim-pres-mono:2} can be seen to be
  cartesian using our assumption that $\Mor|>->|{F}{G}$ is a monomorphism. To
  see that the right-hand square is cartesian, we will use our descent hypothesis for
  $G$. In particular, it suffices to check that each of the squares
  below is cartesian:
  \[
    \DiagramSquare{
      nw/style = dotted pullback,
      nw = F\prn{d},
      sw = G\prn{d},
      ne = \Colim{\DCat}{F},
      se = \Colim{\DCat}{G},
      width = 2.5cm
    }
  \]

  But this is exactly the condition that $\Mor[G]{\DCat}{\ECat}$ have descent.
\end{proof}

\subsection{Compact objects and relatively compact maps}

We recall some of the theory of compact objects. We refer the reader to \citet{adamek-rosicky:1994}
for a detailed exposition of compact objects and locally presentable categories.

\begin{definition}
  An object $X\in\ECat$ is said to be \emph{$\kappa$-compact} when the
  functor $\Hom[\ECat]{X}{-}$ preserves $\kappa$-filtered colimits.  Following
  \citet{lurie:2009}, a morphism $\Mor{X}{Y}$ is said to be \emph{relatively
  $\kappa$-compact} if for each $\kappa$-compact object $Z$ and
  morphism $\Mor{Z}{Y}$, the pullback $Z\times\Sub{Y} X$ is
  $\kappa$-compact:
  \[
    \DiagramSquare{
      nw/style = pullback,
      nw = Z\times\Sub{Y}X,
      ne = X,
      se = Y,
      sw = Z,
    }
  \]
  More tersely, the fibers of $\Mor{X}{Y}$ over $\kappa$-compact objects are
  $\kappa$-compact.
\end{definition}

\begin{remark}
  We note that the requirement that $\Mor{X}{\TermObj{}}$ be relatively
  $\kappa$-compact is a priori stronger than merely asking $X$ to be
  $\kappa$-compact. Their equivalence amounts to requiring $\kappa$-compact
  objects to be closed under products, which will hold in all cases of
  importance for us.
\end{remark}

\begin{notation}
  We will write $\Cls_\kappa$ for the class of relatively $\kappa$-compact maps in $\ECat$.
\end{notation}

\begin{definition}
  A category $\CCat$ is locally $\kappa$-presentable when $\CCat$ is cocomplete and there is a set
  of $\kappa$-compact objects that generates $\CCat$ under $\kappa$-filtered colimits.
\end{definition}

\begin{notation}
  As a Grothendieck topos, $\ECat$ is locally $\kappa$-presentable for some
  regular cardinal $\kappa$. For the remainder of this subsection, we fix
  $\kappa$ to be such a cardinal.
\end{notation}

The colimit of a diagram in $\ArrCat{\ECat}$ of relatively $\kappa$-compact
morphisms is not necessarily relatively $\kappa$-compact. For
a simple counterexample, consider an object $X$ that is \emph{not}
$\kappa$-compact; then the following pushout of relatively $\kappa$-compact
morphisms is not relatively $\kappa$-compact:
\[
  \begin{tikzpicture}[diagram]
    \SpliceDiagramSquare<back/>{
      width = 3cm,
      height = 2cm,
      nw = \TermObj{},
      sw = X,
      ne = \TermObj{},
      se = \TermObj{},
      east/style = {color=LightGray,->},
      south/style = {color=LightGray,->},
    }
    \SpliceDiagramSquare<front/>{
      width = 3cm,
      height = 2cm,
      nw/style = {below right = 2cm of back/nw},
      north/style = {exists,->},
      south/style = {exists,->},
      nw = X,
      sw = X,
      ne = X,
      se = \TermObj{},
      east/style = {color=RedDevil,->},
    }
    \path[->] (back/sw) edge (front/sw);
    \path[->] (back/nw) edge (front/nw);
    \path[->,exists] (back/ne) edge (front/ne);
    \path[->,exists] (back/se) edge (front/se);
  \end{tikzpicture}
\]

More can be said when the diagram is cartesian (\ie valued in
$\CartArrCat{\ECat}$).  In particular, relatively $\kappa$-compact
morphisms are closed under colimits of cartesian diagrams whose bases satisfy descent in
the sense of \cref{def:descent}, which we verify in
\cref{lem:colim-rel-pres}. We first recall Proposition 4.18 of \citet{shulman:2019}.

\begin{proposition}
  \label[proposition]{prop:colimit-fiberwise-presentability}
  Let $\Mor[J]{\DCat}{\ECat}$ be a diagram and let $Y$ be its colimit; a
  morphism $\Mor{X}{Y}$ is relatively $\kappa$-compact if and only if for
  each $d\in\DCat$, the pullback $\Mor{X\times_Y J\prn{d}}{J\prn{d}}$ depicted
  below is relatively $\kappa$-compact:
  \[
    \DiagramSquare{
      width = 2.5cm,
      nw/style = pullback,
      west/style = RegalBlue,
      ne = X,
      se = Y,
      nw = X\times_Y J\prn{d},
      sw = J\prn{d},
      south = i_d,
    }
  \]
\end{proposition}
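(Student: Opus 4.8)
The plan is to prove both directions of the biconditional, with the forward direction being essentially immediate and the backward direction requiring the real work.

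For the ("only if") direction: suppose $\Mor{X}{Y}$ is relatively $\kappa$-compact. Fix $d:\DCat$; we must show $\Mor{X\times_Y J(d)}{J(d)}$ is relatively $\kappa$-compact. Take any $\kappa$-compact object $Z$ with a map $\Mor{Z}{J(d)}$. Composing with the colimit coprojection $\Mor{J(d)}{Y}$ gives a map $\Mor{Z}{Y}$, and by the pasting lemma for pullbacks, $Z\times_{J(d)}(X\times_Y J(d)) \cong Z\times_Y X$, which is $\kappa$-compact by hypothesis. So this direction is a routine diagram chase.

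For the ("if") direction: suppose each pullback $\Mor{X\times_Y J(d)}{J(d)}$ is relatively $\kappa$-compact; we must show $\Mor{X}{Y}$ is relatively $\kappa$-compact. Take a $\kappa$-compact object $Z$ with a map $\Mor[z]{Z}{Y}$; we must show $Z\times_Y X$ is $\kappa$-compact. The key idea is that since $Y = \Colim{\DCat}{J}$ and $Z$ is $\kappa$-compact, the map $z$ should factor (at least locally) through some $J(d)$. The cleanest way to exploit this is to pull the whole colimit back along $z$: form the diagram $d \mapsto Z\times_Y J(d)$, whose colimit is $Z\times_Y Y \cong Z$ provided the colimit $Y$ has the requisite stability — but we are not assuming descent here, so we should instead argue directly. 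Concretely, I would pull back $X \to Y$ along each coprojection to get the diagram $d\mapsto X\times_Y J(d)$; since colimits in $\ECat$ are universal (stable under pullback), $\Colim{\DCat}(X\times_Y J(-)) \cong X$, and similarly pulling everything back along $z$, $\Colim{\DCat}(Z\times_Y J(-)) \cong Z\times_Y Y \cong Z$ and $\Colim{\DCat}(Z\times_Y X\times_Y J(-)) \cong Z\times_Y X$. Now $Z$ is the $\kappa$-filtered — wait, $\DCat$ need not be filtered. This is the main obstacle.

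The main obstacle is therefore handling a general indexing category $\DCat$. The resolution, following Shulman, is to reduce to the $\kappa$-filtered case: every colimit over $\DCat$ can be rewritten as a $\kappa$-filtered colimit of colimits over $\kappa$-small subdiagrams, and relative $\kappa$-compactness is detected on $\kappa$-compact test objects which see only $\kappa$-small data. More precisely: since $Z$ is $\kappa$-compact and $Z = \Colim{\DCat}(Z\times_Y J(-))$ can be exhibited as a $\kappa$-filtered colimit of the partial colimits over $\kappa$-small full subcategories $\DCat' \subseteq \DCat$, the identity $\Mor{Z}{Z}$ factors through $\Colim{\DCat'}(Z\times_Y J(-))$ for some $\kappa$-small $\DCat'$; this makes $Z$ a retract of that partial colimit. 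Then $Z\times_Y X$ is a retract of $\Colim{\DCat'}(Z\times_Y X\times_Y J(-))$. Each term $Z\times_Y X\times_Y J(d) = (Z\times_Y J(d))\times_{J(d)}(X\times_Y J(d))$ is $\kappa$-compact: $Z\times_Y J(d)$ is $\kappa$-compact (pullback of $\kappa$-compact $Z$ along a map into $Y$ — using that $\kappa$-compact objects are closed under the relevant pullbacks in a topos, or more carefully, using relative $\kappa$-compactness of $X\times_Y J(d)\to J(d)$ applied to the $\kappa$-compact object $Z\times_Y J(d)$). A $\kappa$-small colimit of $\kappa$-compact objects is $\kappa$-compact, and retracts of $\kappa$-compact objects are $\kappa$-compact, so $Z\times_Y X$ is $\kappa$-compact as desired. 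The delicate points to get right are the closure of $\kappa$-compact objects under $\kappa$-small colimits and retracts (standard, in \citet{adamek-rosicky:1994}), universality of colimits in the topos $\ECat$, and the factorization of the identity on a $\kappa$-compact object through a partial colimit — which is exactly where $\kappa$-compactness of $Z$ is spent.
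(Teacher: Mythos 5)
Your ``only if'' direction is fine, and your overall strategy for the converse --- reduce to $\kappa$-filtered and $\kappa$-small pieces --- is the right instinct and is indeed how the paper proceeds. But the way you execute the $\kappa$-small part has a genuine gap: your argument needs each term $Z\times_Y J\prn{d}\times_Y X$ of the partial colimit to be $\kappa$-compact, and for that you assert that $Z\times_Y J\prn{d}$ is $\kappa$-compact. This is false. Neither of your two justifications works: $\kappa$-compact objects are \emph{not} closed under pullback along arbitrary maps (only, under suitable cardinal hypotheses, under finite limits of diagrams all of whose objects are already $\kappa$-compact, and here $Y$ and $J\prn{d}$ are arbitrary), and your ``more careful'' alternative is circular, since it applies relative $\kappa$-compactness of $X\times_Y J\prn{d}\to J\prn{d}$ \emph{to} the object $Z\times_Y J\prn{d}$, which presupposes exactly what is to be shown. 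For a concrete failure, take $\ECat=\SET$, $\kappa=\omega$, $Z=Y=\TermObj{}$ and $J\prn{d}=\mathbb{N}$: then $Z\times_Y J\prn{d}=\mathbb{N}$ is not $\omega$-compact. Consequently your retract of a $\kappa$-small colimit is a retract of a colimit of objects you cannot control, and the argument stalls.

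The paper's proof repairs exactly this point by treating the two cases differently rather than running a single retract argument. In the $\kappa$-filtered case it never pulls $Z$ back at all: $\kappa$-compactness of $Z$ lets the map $\Mor{Z}{Y}$ factor outright through some $\Mor{J\prn{d}}{Y}$, and then $Z\times_Y X\cong Z\times_{J\prn{d}}\prn{X\times_Y J\prn{d}}$ is $\kappa$-compact directly from the hypothesis. In the $\kappa$-small case it decomposes the \emph{diagram} $J$ itself, using local $\kappa$-presentability of $\ECat^{\DCat}$, as a $\kappa$-filtered colimit of $\kappa$-compact diagrams $E_i$; the test objects landing in $J\prn{d}$ are then the genuinely $\kappa$-compact $E_i\prn{d}$, so $\prn{X\times_Y J\prn{d}}\times_{J\prn{d}}E_i\prn{d}$ is $\kappa$-compact by hypothesis, and universality of colimits finishes the computation. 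If you want to keep your retract idea, you would still have to import this second decomposition to make the terms of your partial colimits $\kappa$-compact, at which point you have essentially reconstructed the paper's argument.
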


\begin{proof}
  The only if direction is clear, so suppose for each $d \in \DCat$,
  $\Mor{X \times_Y J\prn{d}}{J\prn{d}}$ is relatively $\kappa$-compact. We must show that
  $\Mor{X}{Y}$ is relatively $\kappa$-compact.
  Recall that any diagram can be presented as a $\kappa$-filtered diagram of
  colimits of $\kappa$-small sub-diagrams~\citep[Theorem
  IX.1.1]{maclane:1998}. Therefore, it suffices to show that this holds when $J$
  is $\kappa$-filtered and when $J$ is $\kappa$-small.

  First suppose $J$ is $\kappa$-filtered. Fix a $\kappa$-compact object $Z$ together with a morphism
  $\Mor{Z}{Y}$, we must show that the pullback $Z \times_Y X$ is $\kappa$-compact. As $Y$ is the
  colimit of a $\kappa$-filtered diagram, the morphism $\Mor{Z}{Y}$ must factor through some
  $\Mor{J\prn{d}}{Y}$:
  \[
    \begin{tikzpicture}[diagram]
      \SpliceDiagramSquare<l/>{
        width = 2.5cm,
        nw/style = pullback,
        ne/style = pullback,
        nw = {Z \times_Y X},
        sw = Z,
        ne = {J\prn{d} \times_Y X},
        se = J\prn{d},
      }
      \SpliceDiagramSquare<r/>{
        glue = west, glue target = l/,
        ne = X,
        se = Y,
      }
    \end{tikzpicture}
  \]
  By assumption, $\Mor{J\prn{d} \times_Y X}{J\prn{d}}$ is relatively $\kappa$-compact so
  $Z \times_Y X$ is $\kappa$-compact.

  Next, suppose that $J$ is a $\kappa$-small diagram. In this case, the diagram category
  $\ECat^\DCat$ is also locally $\kappa$-presentable~\citep[Corollary
  1.54]{adamek-rosicky:1994}. Accordingly, $D = \Colim{i \in \ICat} E_i$, where each $E_i$ is a
  $\kappa$-compact object in $\ECat^\DCat$ and $\ICat$ is $\kappa$-filtered. Each $E_{i}(d)$ is
  $\kappa$-compact~\citep[Lemma 4.2]{shulman:2019} and by commutation of colimits
  $Y = \Colim{i \in \ICat} \Colim{d \in \DCat} E_i(d)$.

  By assumption $\ICat$ is $\kappa$-filtered so by the already proven case it suffices to show that
  $\Mor{X \times_{Y} \Colim{d} E_i\prn{d}}{\Colim{d} E_i\prn{d}}$ is relatively $\kappa$-compact for
  each $i \in \ICat$. As the $\kappa$-small colimit of $\kappa$-small objects, $\Colim{d} E_i\prn{d}$
  is $\kappa$-compact so this morphism is relatively $\kappa$-compact if and only if
  $X \times_{Y} \Colim{d} E_i(d)$ is $\kappa$-compact. By universality of colimits, we have a
  sequence of identifications:
  \[
    X \times_Y \Colim{d} E_i\prn{d} =
    \Colim{d} X \times_Y E_i\prn{d} =
    \Colim{d} \prn{\prn{X \times_{Y} J\prn{d}} \times_{J(d)} E_i\prn{d}}
  \]

  Thus, this object is $\kappa$-compact as the $\kappa$-small colimit of $\kappa$-compact objects.
\end{proof}

\begin{lemma}\label[lemma]{lem:colim-rel-pres}
  The colimit of a diagram $\Mor[J]{\DCat}{\CartArrCat{\ECat}}$ of relatively
  $\kappa$-compact morphisms is relatively $\kappa$-compact if the base
  $\Mor[J_1]{\DCat}{\ECat}$ satisfies descent in the sense of \cref{def:descent}.
\end{lemma}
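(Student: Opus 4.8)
The plan is to reduce the statement to \cref{prop:colimit-fiberwise-presentability} once the colimit has been built inside $\CartArrCat{\ECat}$. First, let $\Mor[J_0]{\DCat}{\ECat}$ denote the functor carrying $d$ to the domain of $J\prn{d}$, so that $J$ is a cartesian natural transformation $\Mor{J_0}{J_1}$. Since $J_1$ satisfies descent, \cref{lem:colim-in-cart-arr} applies: the colimit $\Colim{\DCat}{J}$, computed pointwise in $\ArrCat{\ECat}$ as the morphism $\Mor[p]{X}{Y}$ with $X = \Colim{\DCat}{J_0}$ and $Y = \Colim{\DCat}{J_1}$, already lies in $\CartArrCat{\ECat}$; in particular, for each $d:\DCat$ the colimit-cocone component $\Mor{J\prn{d}}{p}$ is a cartesian square.

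Next I would invoke \cref{prop:colimit-fiberwise-presentability} for the diagram $J_1$, whose colimit is $Y$: the morphism $\Mor[p]{X}{Y}$ is relatively $\kappa$-compact precisely when, for every $d:\DCat$, the pullback $\Mor{X\times_Y J_1\prn{d}}{J_1\prn{d}}$ is relatively $\kappa$-compact. Here the cartesianness of the cocone component $\Mor{J\prn{d}}{p}$ --- exactly the output of \cref{lem:colim-in-cart-arr} --- identifies the pullback $X\times_Y J_1\prn{d}$ with $J_0\prn{d}$ over $J_1\prn{d}$, so that this pullback morphism is (isomorphic to) $J\prn{d}$ itself. Since $J\prn{d}$ is relatively $\kappa$-compact by hypothesis, the criterion of \cref{prop:colimit-fiberwise-presentability} is met, and $\Colim{\DCat}{J} = p$ is relatively $\kappa$-compact.

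I do not expect a genuine obstacle: the argument is an assembly of \cref{lem:colim-in-cart-arr} and \cref{prop:colimit-fiberwise-presentability}. The one point deserving care is that the descent hypothesis on $J_1$ is precisely what makes the cocone components cartesian, and hence what lets the fiberwise pullbacks in \cref{prop:colimit-fiberwise-presentability} recover the original morphisms $J\prn{d}$; without descent the colimit need not even land in $\CartArrCat{\ECat}$, so the reduction would fail.
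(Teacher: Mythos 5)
Your proposal is correct and follows essentially the same route as the paper: both reduce to \cref{prop:colimit-fiberwise-presentability} and use the descent hypothesis on $J_1$ to identify the fiberwise pullback $X\times_Y J_1\prn{d}\to J_1\prn{d}$ with $J\prn{d}$, which is relatively $\kappa$-compact by assumption. Your explicit appeal to \cref{lem:colim-in-cart-arr} for the cartesianness of the cocone components is a slightly more detailed packaging of the same observation the paper makes directly from descent.
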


\begin{proof}
  By \cref{prop:colimit-fiberwise-presentability} it suffices to check that
  each fiber $i_d^*\Colim{\DCat}{J_0} : \ArrCat{\ECat}$ below is relatively
  $\kappa$-compact:
  \begin{equation}\label[diagram]{diag:colim-rel-pres}
    \DiagramSquare{
      width = 3.5cm,
      nw/style = pullback,
      ne = J\prn{d},
      ne = \Colim{\DCat}J_0,
      se = \Colim{\DCat}J_1,
      sw = J_1\prn{d},
      south = i_d,
      nw = i_d^*\Colim{\DCat}{J_0},
    }
  \end{equation}

  Because $J_1$ satisfies descent, the cartesian square
  depicted in \cref{diag:colim-rel-pres} is actually
  $\Mor{J\prn{d}}{\Colim{\DCat}{J}}$; but we have already assumed that $J\prn{d}$ is
  relatively $\kappa$-compact.
\end{proof}

\begin{lemma}
  \label[lemma]{lem:cls-kappa-satisfies-descent}
  The class of maps $\Cls_\kappa$ satisfies the descent axiom \textbf{(U7)}.
\end{lemma}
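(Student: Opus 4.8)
The plan is to present the codomain of $f$ as a colimit and invoke \cref{prop:colimit-fiberwise-presentability}. Write the given cartesian epimorphism $\Mor|->>|{g}{f}$ as a cartesian square with $\Mor[g]{E}{C}$ on the left and $\Mor[f]{B}{D}$ on the right, so that $E\cong B\times_D C$ and the base map $\Mor|->>|[p]{C}{D}$ is an epimorphism; as $\ECat$ is a Grothendieck topos, $p$ is in fact an effective epimorphism, so $D$ is the coequalizer of the kernel pair of $p$. Regarding this coequalizer as a diagram $\Mor[K]{\DCat}{\ECat}$ on the parallel-pair category $\DCat$ whose two values are $C\times_D C$ and $C$, we obtain $D = \Colim{\DCat}{K}$.

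Applying \cref{prop:colimit-fiberwise-presentability} to this colimit, $f$ belongs to $\Cls_\kappa$ precisely when the two base changes $\Mor{B\times_D C}{C}$ and $\Mor{B\times_D\prn{C\times_D C}}{C\times_D C}$ along the legs of the colimit cone lie in $\Cls_\kappa$. The first of these is exactly $g$, which lies in $\Cls_\kappa$ by hypothesis. For the second, the point is that the structure map $\Mor{C\times_D C}{D}$ factors through either projection onto $C$, whence $B\times_D\prn{C\times_D C}\cong E\times_C\prn{C\times_D C}$ and the displayed map is the pullback of $g$ along a projection $\Mor{C\times_D C}{C}$; since relatively $\kappa$-compact maps are stable under pullback (immediate from the definition), this map too lies in $\Cls_\kappa$. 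Hence $f\in\Cls_\kappa$, as required.

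The one step deserving care is the identification of the fiber of $\Colim{\DCat}{K}$ over the higher vertex $C\times_D C$ as a base change of $g$ rather than of $f$: this is where the hypothesis $g\in\Cls_\kappa$ — as opposed to the conclusion $f\in\Cls_\kappa$, which is what we are proving — actually gets used, and it rests on the two facts that the square defining the cartesian epimorphism is a pullback and that $\Mor{C\times_D C}{D}$ factors through both projections onto $C$. Once these are unwound the verification is a routine diagram chase; the remaining ingredients — effectivity of epimorphisms in a topos and pullback-stability of $\Cls_\kappa$ — are entirely standard.
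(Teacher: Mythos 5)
Your proof is correct and is essentially the paper's own argument: both present the codomain of $f$ as the coequalizer of the kernel pair of the base epimorphism, invoke \cref{prop:colimit-fiberwise-presentability}, identify the fiber over the base of $g$ with $g$ itself via the cartesianness of the given square, and identify the fiber over the kernel pair as a pullback of $g$ along a projection, concluding by pullback-stability. The only cosmetic difference is notational (the paper also displays the kernel pair upstairs, which plays no essential role).
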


\begin{proof}
  Let $g$ be a relatively $\kappa$-compact morphism equipped
  with a cartesian epimorphism $\Mor|->>|{g}{f}$ as below:
  \[
    \DiagramSquare{
      nw/style = pullback,
      south/style = ->>,
      north/style = ->>,
      nw = C,
      sw = D,
      ne = A,
      se = B,
      west = \Cls_\kappa\ni g,
      east = f,
      south = b,
      north = a,
    }
  \]

  We must show that $f$ is relatively $\kappa$-compact. We will use the fact
  that both $\Mor|->>|[a]{C}{A}$ and $\Mor|->>|[b]{D}{B}$ are coequalizers of
  their kernel pairs, and that kernel pairs are stable:
  \begin{equation}\label[diagram]{diag:kappa-pres-u7:1}
    \begin{tikzpicture}[diagram,baseline=(0.base)]
      \node (A) [pullback] {$C\times_AC$};
      \node (B) [pullback, right = 2.5cm of A] {$C$};
      \node (C) [right = 2.5cm of B] {$A$};
      \path[->>] (B) edge node [above] {$a$} (C);
      \path[->,transform canvas={yshift=.1cm}] (A) edge node [above] {$q_1$} (B);
      \path[->,transform canvas={yshift=-.1cm}] (A) edge node [below] {$q_2$} (B);
      \node (0) [below = 2cm of A] {$D\times_BD$};
      \node (1) [right = 2.5cm of 0] {$D$};
      \node (2) [right = 2.5cm of 1] {$B$};
      \path[->>] (1) edge node [above] {$b$} (2);
      \path[->,transform canvas={yshift=.1cm}] (0) edge node [above] {$p_1$} (1);
      \path[->,transform canvas={yshift=-.1cm}] (0) edge node [below] {$p_2$} (1);
      \path[->] (A) edge (0);
      \path[->] (B) edge node[upright desc] {$g$} (1);
      \path[->] (C) edge node[right] {$f$} (2);
    \end{tikzpicture}
  \end{equation}

  By \cref{prop:colimit-fiberwise-presentability} it suffices to check that
  $b^*f$, $\prn{b\circ p_0}^*f$, and $\prn{b\circ p_1}^*f$ are relatively
  $\kappa$-compact.  But each of these is a pullback of $g$
  (\cref{diag:kappa-pres-u7:1}) and therefore by stability \textbf{(U1)}, $f$ is
  relatively $\kappa$-compact.
\end{proof}

\subsection{Relating small and relatively compact maps}

For this subsection, fix a presentation $\ECat = \Sh{\CCat,J}$ and
write $i^* \dashv i_*$ for the geometric embedding
$\EmbMor{\Sh{\CCat,J}}{\Psh{\CCat}}$. Recall that a presheaf $P \in \Psh{\CCat}$
is $\kappa$-small when each $P(C)$ is a $\kappa$-small set. Under mild
assumptions, small presheaves precisely correspond to compact presheaves. We
reproduce a proof due to \citet[Example 1.31]{adamek-rosicky:1994}:

\begin{lemma}
  \label[lemma]{lem:small-iff-compact}
  Given a regular cardinal $\kappa > \verts{\CCat}$ and a presheaf $P \in \Psh{\CCat}$, the latter is $\kappa$-compact if and
  only if it is valued in $\kappa$-small sets.
\end{lemma}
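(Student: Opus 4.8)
The plan is to prove the two directions separately, with the "only if" direction being essentially formal and the "if" direction requiring the cardinality bound $\kappa > \verts{\CCat}$.

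For the "only if" direction, suppose $P$ is $\kappa$-compact; I want to show each $P(C)$ is $\kappa$-small. The key observation is that evaluation at $C$ is a colimit-preserving functor $\Psh{\CCat} \to \SET$ — indeed $P(C) \cong \Hom[\Psh{\CCat}]{\Yo{C}}{P}$ by Yoneda, and $\Yo{C}$ is $\kappa$-compact since representables are always $\verts{\CCat}$-small hence $\kappa$-small (this uses $\kappa > \verts{\CCat}$ again, or more simply that representables are finitely presentable when hom-sets are finite, but in general $\kappa$-compact for $\kappa$ large enough). Actually the cleanest route: $\Hom[\Psh{\CCat}]{\Yo{C}}{-}$ preserves $\kappa$-filtered colimits because these are computed pointwise and $\Yo{C}$ corepresents evaluation at $C$. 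So if $P$ is $\kappa$-compact... wait, that's backwards. Let me instead argue: write $P$ as a $\kappa$-filtered colimit of its $\kappa$-small subpresheaves (every presheaf is the $\kappa$-filtered union of its $\kappa$-small subobjects, since $\kappa > \verts{\CCat}$ guarantees the subpresheaves generated by $\kappa$-small families of elements are $\kappa$-small and these are $\kappa$-filtered). Then $\kappa$-compactness of $P$ forces the identity $P \to P$ to factor through one of these subpresheaves, so $P$ itself is $\kappa$-small, whence each $P(C)$ is.

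For the "if" direction, suppose $P$ is valued in $\kappa$-small sets; I show $\Hom[\Psh{\CCat}]{P}{-}$ preserves $\kappa$-filtered colimits. Write $P = \Colim{(C,x) : \ElCat{P}} \Yo{C}$ as a colimit of representables over its category of elements. Since each $P(C)$ is $\kappa$-small and $\CCat$ has fewer than $\kappa$ objects and morphisms, $\ElCat{P}$ is a $\kappa$-small category. A $\kappa$-small colimit of $\kappa$-compact objects is $\kappa$-compact (a standard fact, \citealp{adamek-rosicky:1994}), so it remains to check that each representable $\Yo{C}$ is $\kappa$-compact: but $\Hom[\Psh{\CCat}]{\Yo{C}}{-} \cong \mathrm{ev}_C$ preserves all colimits, in particular $\kappa$-filtered ones. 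Hence $P$ is $\kappa$-compact.

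The main obstacle — really the only subtle point — is the bookkeeping around where the hypothesis $\kappa > \verts{\CCat}$ is actually needed: it is needed to ensure that $\ElCat{P}$ is $\kappa$-small (one needs both $\verts{\CCat} < \kappa$ and all $P(C) < \kappa$ to conclude the set of objects $\coprod_{C} P(C)$ and the set of morphisms are $< \kappa$), and symmetrically to ensure that the $\kappa$-small subpresheaves of $P$ are cofinal and $\kappa$-filtered in the "only if" direction. I would state these smallness estimates carefully and otherwise cite \citet[Example 1.31]{adamek-rosicky:1994} for the remaining routine verifications.
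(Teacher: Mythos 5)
Your ``if'' direction is essentially identical to the paper's: express $P$ as $\Colim{\ElCat{P}}\Yo\circ\pi$, observe that $\kappa>\verts{\CCat}$ together with pointwise $\kappa$-smallness makes $\ElCat{P}$ a $\kappa$-small category, that representables are $\kappa$-compact, and conclude by closure of $\kappa$-compact objects under $\kappa$-small colimits.

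For the ``only if'' direction you take a genuinely different route. The paper presents $P$ as a $\kappa$-filtered colimit by passing to the free $\kappa$-small-colimit completion of $\ElCat{P}$, extending $\Yo\circ\pi$ by colimits, and citing \citet[Theorem 1.20]{adamek-rosicky:1994} for the assertion that the canonical map from that colimit back to $P$ is an isomorphism; $\kappa$-compactness then yields a factorization of $p^{-1}$ through one stage, and a separate (brief) argument shows the resulting map is monic. You instead index over the poset of $\kappa$-small subpresheaves of $P$ under inclusion. The regularity of $\kappa$ and the bound $\kappa>\verts{\CCat}$ ensure this poset is $\kappa$-directed and that its union is all of $P$ (every element generates a $\verts{\CCat}$-small, hence $\kappa$-small, subpresheaf). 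Compactness then factors $\IdArr{P}$ through one of these subpresheaves, and since the stage maps are already monomorphisms the factorization immediately exhibits $P$ as (isomorphic to) that $\kappa$-small subpresheaf, with no separate monicity check and no appeal to the free-cocompletion machinery. Both arguments are correct; yours is a bit more elementary and self-contained, while the paper's version is the form that generalizes cleanly to arbitrary accessible categories and lines up with the later appeal to uniformization in \cref{lem:incl-preserves-compactness}. One presentational note: the final writeup should omit the exploratory asides (``wait, that's backwards'', ``Actually the cleanest route'') and state the chosen indexing diagram up front.
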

\begin{proof}
  First express $P$ as the colimit of representables:
  $P = \Colim{\prn{c, p} \in \ElCat{P}} \Yo{c} = \Colim{\ElCat{P}} \Yo \circ \pi$.
  On one hand, if $P$ is valued in $\kappa$-small sets, then
  $\ElCat{P}$ is $\kappa$-small, while each $\Yo{c}$ is
  $\kappa$-compact.  Thus, $P$ is a $\kappa$-small colimit of
  $\kappa$-compact objects, hence $\kappa$-compact.

  On the other hand, suppose instead that $P$ is $\kappa$-compact;
  we will show that it is valued in $\kappa$-small sets.
  By completing $\ElCat{P}$ under $\kappa$-small colimits and extending
  $\Yo \circ \pi$ by colimits, we obtain a $\kappa$-filtered diagram $\DCat$ and
  a map $\Mor[F]{\DCat}{\Psh{\CCat}}$ which sends a formal colimit to a
  $\kappa$-small colimit of representables. Observe that each $F(d)$ is
  $\kappa$-small as a $\kappa$-small colimit of representables.
  Moreover, the canonical map $\Mor[p]{\Colim{\DCat} F}{P}$ is an
  isomorphism~\citep[Theorem 1.20]{adamek-rosicky:1994} so that, in particular,
  $P$ is the $\kappa$-filtered colimit of $\kappa$-small objects.

  As $P$ is $\kappa$-compact, we obtain a map $\Mor[r]{P}{F\prn{d}}$ for some
  $d : \DCat$ fitting into the following diagram:
  \[
    \begin{tikzpicture}[diagram]
      \node (P) {$P$};
      \node [below right = 1.5cm and 3cm of P] (Fd) {$F\prn{d}$};
      \node [right = 3cm of P] (C) {$\Colim{\DCat} F$};
      \path[->] (P) edge node[above] {$p^{-1}$} (C);
      \path[->] (P) edge node[sloped,below] {$r$} (Fd);
      \path[->] (Fd) edge (C);
    \end{tikzpicture}
  \]
  It follows immediately that $r$ is monic, and so $P$ is a subobject of
  $F\prn{d}$. As $F\prn{d}$ is valued in $\kappa$-small sets, so is $P$.
\end{proof}

\begin{lemma}
  \label[lemma]{lem:rel-compact-iff-repr-compact}
  For any $\kappa > \verts{\CCat}$, a morphism $\Mor[f]{P}{Q}$ is relatively $\kappa$-compact in
  $\Psh{\CCat}$ if and only if the fibers of $f$ over representable presheaves are
  $\kappa$-compact.
\end{lemma}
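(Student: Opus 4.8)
The plan is to prove both implications by reduction to \cref{lem:small-iff-compact}, which characterizes $\kappa$-compact presheaves as those valued in $\kappa$-small sets (given $\kappa > \verts{\CCat}$). First I would recall that every presheaf is a colimit of representables, and more usefully here, that every object with a map to $Q$ that we need to test against is built from representables; but the cleanest route is to test relative $\kappa$-compactness only against representable presheaves. Indeed, since the representables $\Yo{C}$ are $\kappa$-compact and they generate $\Psh{\CCat}$ under colimits, and since relative $\kappa$-compactness of $f$ is equivalent (by \cref{prop:colimit-fiberwise-presentability}, applied to the canonical presentation $Q = \Colim{(C,q):\ElCat{Q}} \Yo{C}$) to the relative $\kappa$-compactness of each pullback $f^*\Yo{C} \to \Yo{C}$, it suffices to analyze the fibers over representables. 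So the first step is to invoke \cref{prop:colimit-fiberwise-presentability} with $\DCat = \ElCat{Q}$ and $J = \Yo\circ\pi$ to reduce the statement to: $f$ is relatively $\kappa$-compact iff for every $(C,q)$, the morphism $q^*f : q^*P \to \Yo{C}$ is relatively $\kappa$-compact.

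Next I would unpack what relative $\kappa$-compactness of a morphism $g : R \to \Yo{C}$ into a representable means. The identity $\Mor{\Yo{C}}{\Yo{C}}$ exhibits $\Yo{C}$ itself as a $\kappa$-compact object mapping to $\Yo{C}$ (using $\kappa > \verts{\CCat}$ via \cref{lem:small-iff-compact}, or directly since representables are always $\kappa$-compact for $\kappa \geq \omega$). Pulling back along this identity gives $R$ itself, so relative $\kappa$-compactness of $g$ forces $R$ to be $\kappa$-compact. Conversely, if $R$ is $\kappa$-compact, then for any $\kappa$-compact $Z$ with a map $\Mor{Z}{\Yo{C}}$, the pullback $Z \times_{\Yo{C}} R$ is a subobject of $Z \times R$ — no wait, more carefully: I would note that the pullback $Z\times_{\Yo C} R$ fits into a pullback square and, since $\Psh{\CCat}$ is a topos hence $\kappa$-compact objects are closed under finite limits when $\kappa$ is chosen so that $\Psh{\CCat}$ is locally $\kappa$-presentable (which holds for $\kappa > \verts{\CCat}$), the pullback of two $\kappa$-compact objects over any object is $\kappa$-compact. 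Thus $g : R \to \Yo{C}$ is relatively $\kappa$-compact if and only if $R$ is $\kappa$-compact, i.e.\ iff $R$ is valued in $\kappa$-small sets by \cref{lem:small-iff-compact}. Combining this with the reduction of the previous paragraph: $f$ is relatively $\kappa$-compact iff each fiber $q^*P$ (for $q : \Yo C \to Q$) is $\kappa$-compact, which is precisely the statement.

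The main obstacle I anticipate is the closure of $\kappa$-compact objects in $\Psh{\CCat}$ under pullback — this needs $\Psh{\CCat}$ to be locally $\kappa$-presentable, which is exactly why the hypothesis $\kappa > \verts{\CCat}$ appears (the representables then form a generating set of $\kappa$-compact objects, and in a locally $\kappa$-presentable category $\kappa$-compact objects are closed under $\kappa$-small limits, in particular pullbacks). I would cite \citet{adamek-rosicky:1994} for this closure property rather than reprove it. A secondary subtlety is making sure the use of \cref{prop:colimit-fiberwise-presentability} is legitimate: its statement is for a general diagram $J : \DCat \to \ECat$ with colimit $Y$, and here $\DCat = \ElCat{Q}$, $Y = Q$, which is fine since $Q = \Colim{\ElCat Q}\Yo\circ\pi$ canonically. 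Everything else is bookkeeping.
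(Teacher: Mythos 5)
Your overall strategy works and is genuinely different from the paper's. You decompose the \emph{base} $Q$ as the canonical colimit of representables and invoke \cref{prop:colimit-fiberwise-presentability} to reduce to morphisms into representables, then characterize relative $\kappa$-compactness over $\Yo{C}$ as $\kappa$-compactness of the total space. The paper instead decomposes the \emph{test object}: it fixes an arbitrary $\kappa$-compact $R$ with a map $g$ to $Q$, notes via \cref{lem:small-iff-compact} that $\ElCat{R}$ is $\kappa$-small, and uses universality of colimits to exhibit $g^*P$ as a $\kappa$-small colimit of the fibers $f^*\Yo{C}$, hence $\kappa$-compact by closure of compacts under $\kappa$-small colimits. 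The paper's route is lighter: it needs only universality of colimits, whereas yours leans on \cref{prop:colimit-fiberwise-presentability} (the hard result of this subsection) and on closure of $\kappa$-compact presheaves under pullback.

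That last ingredient is where your justification fails. It is \emph{not} true that in a locally $\kappa$-presentable category the $\kappa$-compact objects are closed under $\kappa$-small limits, or even finite limits --- in $\mathbf{Grp}$, for instance, an equalizer (kernel) of a map between finitely presented groups need not be finitely presented --- and \citet{adamek-rosicky:1994} contains no such theorem; closure under $\kappa$-small \emph{colimits} is what the general theory gives you. This is precisely why the paper's \cref{cor:sharply-larger-facts} must assume $\lambda$ \emph{sharply} larger than a suitable $\lambda_0$ to obtain closure under finite limits. Fortunately the special case you need is true for a reason you already have in hand: by \cref{lem:small-iff-compact}, the $\kappa$-compact presheaves for $\kappa>\verts{\CCat}$ are exactly those valued in $\kappa$-small sets, pullbacks of presheaves are computed pointwise, and a pullback of $\kappa$-small sets is $\kappa$-small. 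With that repair --- which fixes only the citation, not the structure of the argument --- your proof goes through.
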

\begin{proof}
  The only-if direction is immediate, so it suffices to show that $f$ is relatively compact provided
  that its fibers over representable presheaves are compact. To this end, fix a $\kappa$-compact
  presheaf $R$ and a morphism $\Mor[g]{R}{Q}$:
  \[
    \DiagramSquare{
      nw = g^*{P},
      nw/style = pullback,
      ne = P,
      se = Q,
      sw = R,
      south = g,
      east = f,
    }
  \]

  We must show that $g^*{P}$ is $\kappa$-compact. Viewing $R$ as a colimit of
  representables, universality ensures that $g^*{P} = \Colim{\prn{C,r} \in
  \ElCat{R}} f^*\Yo{C}$. By assumption, each $f^*\Yo{C}$ is $\kappa$-compact,
  and by \cref{lem:small-iff-compact} $\ElCat{R}$ is a $\kappa$-small category.
  Accordingly, as a $\kappa$-small colimit of $\kappa$-compact objects, $g^*P$
  is $\kappa$-compact.
\end{proof}

For the next sequence of results, we shall require some results from the theory
of accessible categories and accessible functors. In order to state them, we
require a small amount of set-theoretic bureaucracy in the form of the
$\SharplyGt$ relation:

\begin{definition}
  A cardinal $\lambda > \kappa$ is \emph{sharply larger} than $\kappa$, notated
  $\lambda \SharplyGt \kappa$, if each $\kappa$-accessible category is
  $\lambda$-accessible.
\end{definition}

We emphasize that $\lambda \SharplyGt \kappa$ is not the same as
$\lambda > \kappa$ nor does it mean anything akin to ``$\lambda$ is much larger
than $\kappa$''. We refer the reader to \textcite[Theorem
2.11]{adamek-rosicky:1994} for more information about $\SharplyGt$. For our
purposes it suffices to know that if $\lambda$ is strongly inaccessible then
$\kappa < \lambda$ is equivalent to $\kappa \SharplyLt \lambda$.

\begin{lemma}
  \label[lemma]{lem:incl-preserves-compactness}
  There exists a cardinal $\lambda_0$ such that for any
  $\lambda \SharplyGt \lambda_0$, both $i_*$ and $i^*$ preserve
  $\lambda$-filtered colimits and $\lambda$-compact objects.
\end{lemma}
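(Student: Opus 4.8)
The plan is to handle the assertions about $i^*$ almost formally and to reduce the two assertions about $i_*$ to a single size estimate on sheafification, which is exactly what the hypothesis $\SharplyGt\lambda_0$ is there to control. Since $i^*$ is a left adjoint, it preserves every colimit — in particular every $\lambda$-filtered one — for all $\lambda$. For $i_*$: because $\CCat$ is small, the covering sieves form a set, so $\Sh{\CCat,J}$ is the (left-exact, reflective) localization of $\Psh{\CCat}$ at the small set of inclusions $S\hookrightarrow\Yo{c}$ indexed by covering sieves $S$ of objects $c:\CCat$. Each such inclusion is a morphism of presheaves that are $\mu$-presentable for every regular $\mu>\verts{\CCat}$, so the sheaves — being precisely the objects orthogonal to this set — are closed under $\mu$-filtered colimits in $\Psh{\CCat}$; hence $i_*$ preserves $\mu$-filtered colimits for every regular $\mu>\verts{\CCat}$. (Alternatively, $i_*$ is a right adjoint between locally presentable categories, so it is accessible, and one invokes the uniformization theorem for accessible functors of \citet{adamek-rosicky:1994}.) Fix a regular cardinal $\lambda_0\geq\prn{2^{\verts{\CCat}}}^{+}$. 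Then for every $\lambda\SharplyGt\lambda_0$ (indeed for every regular $\lambda\geq\lambda_0$), $i_*$ preserves $\lambda$-filtered colimits and both $\Psh{\CCat}$ and $\Sh{\CCat,J}$ are locally $\lambda$-presentable.

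For such $\lambda$, $i^*$ preserves $\lambda$-compact objects by the standard adjunction argument: if $X:\Psh{\CCat}$ is $\lambda$-compact then $\Hom[\Sh{\CCat,J}]{i^*X}{-}\cong\Hom[\Psh{\CCat}]{X}{i_*\prn{-}}$ is a composite of functors preserving $\lambda$-filtered colimits, namely $\Hom[\Psh{\CCat}]{X}{-}$ and $i_*$, so $i^*X$ is $\lambda$-compact. It remains to show $i_*$ preserves $\lambda$-compact objects; by \cref{lem:small-iff-compact} this says exactly that every $\lambda$-compact sheaf is valued in $\lambda$-small sets. Granting the key estimate that sheafification carries $\lambda$-small-valued presheaves to $\lambda$-small-valued sheaves, this follows by induction over the construction of the $\lambda$-compact objects of $\Sh{\CCat,J}$: that category is locally $\lambda$-presentable with $\lambda$-compact generators the sheafified representables $i^*\Yo{c}$ — these being $\lambda$-compact since $\Hom[\Sh{\CCat,J}]{i^*\Yo{c}}{-}=\prn{i_*-}\prn{c}$ preserves $\lambda$-filtered colimits — so every $\lambda$-compact sheaf is built from the $i^*\Yo{c}$ by $\lambda$-small colimits and retracts. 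The $i^*\Yo{c}$ are sheafifications of the $\verts{\CCat}$-small-valued presheaves $\Yo{c}$, hence $\lambda$-small-valued by the estimate; a $\lambda$-small colimit in $\Sh{\CCat,J}$ of $\lambda$-small-valued sheaves is the sheafification of the corresponding colimit formed pointwise in $\Psh{\CCat}$, hence again $\lambda$-small-valued; and retracts obviously preserve $\lambda$-small-valuedness.

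The key estimate is where $\SharplyGt$ is used, and I expect it to be the only genuine obstacle. Presenting sheafification as the twofold plus-construction, the value $P^{+}\prn{c}$ is a filtered colimit — indexed by the at most $2^{\verts{\CCat}}$ covering sieves of $c$ — of matching families, each of which is a limit indexed by a category with at most $\verts{\CCat}$ morphisms. Thus if $P$ is $\lambda$-small-valued and $\nu\coloneqq\sup_{d:\CCat}\verts{P\prn{d}}$, which is $<\lambda$ because $\lambda$ is regular and there are at most $\verts{\CCat}<\lambda$ objects, then $\verts{P^{+}\prn{c}}\leq 2^{\verts{\CCat}}\cdot\nu^{\verts{\CCat}}$. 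This bound is $<\lambda$ as soon as $\lambda$ is regular, $\lambda>2^{\verts{\CCat}}$, and $\nu^{\verts{\CCat}}<\lambda$ for all $\nu<\lambda$ — properties enjoyed by every $\lambda\SharplyGt\lambda_0$, given our choice $\lambda_0\geq\prn{2^{\verts{\CCat}}}^{+}$, by the combinatorial characterization of sharply larger cardinals. Iterating the estimate once more handles $P^{++}$, completing the argument. I stress that this step really does need $\lambda\SharplyGt\lambda_0$ and not merely $\lambda>\lambda_0$, since the closure property $\nu^{\verts{\CCat}}<\lambda$ fails at, for instance, singular cardinals of cofinality $\leq\verts{\CCat}$; the remaining ingredients are purely formal.
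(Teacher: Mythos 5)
Your proof is correct but takes a genuinely different route from the paper. The paper's proof is a two-sentence application of a black box: $i^*$ and $i_*$ are adjoints between locally presentable categories and hence accessible, and the uniformization theorem (2.19) of \citet{adamek-rosicky:1994} then produces the desired $\lambda_0$ in one stroke for both the preservation of $\lambda$-filtered colimits and of $\lambda$-compact objects. You instead unwind that theorem by hand in this particular case: you dispatch the two assertions about $i^*$ formally (left adjoints preserve all colimits; the adjunction transfers $\lambda$-compactness once $i_*$ preserves $\lambda$-filtered colimits), reduce $i_*$-preservation of $\lambda$-compactness to the statement that sheafification carries $\lambda$-small-valued presheaves to $\lambda$-small-valued sheaves via \cref{lem:small-iff-compact} and the standard ``retract of a $\lambda$-small colimit of generators'' description of $\lambda$-compact objects, and then establish that estimate directly from the twofold plus-construction. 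What this buys you is an explicit candidate for $\lambda_0$ (namely anything $\geq \prn{2^{\verts{\CCat}}}^{+}$) and a concrete picture of exactly where $\SharplyGt$ is needed (the cardinal-arithmetic closure property $\nu^{\verts{\CCat}}<\lambda$ for all $\nu<\lambda$), which the paper's citation hides. The cost is length and a dependence on the combinatorial characterization of $\SharplyGt$ from Adámek--Rosický, so you are not actually avoiding their theory, just invoking a finer-grained slice of it. Both arguments are sound; yours is the more informative for a reader who wants to see the mechanism.
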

\begin{proof}
  As adjoints $i_*$ and $i^*$ are both accessible functors. Therefore, the result follows immediately
  from the uniformization result (2.19) of \citet{adamek-rosicky:1994}.
\end{proof}

\begin{lemma}
  \label[lemma]{lem:incl-reflects-compactness}
  If $i^*$ preserves $\lambda$-compact objects, then $i_*$ reflects them.
\end{lemma}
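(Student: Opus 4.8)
The plan is to exploit the full faithfulness of the inclusion $i_*$. Because $\Sh{\CCat,J}$ is a reflective subcategory of $\Psh{\CCat}$, the counit of the adjunction $i^*\dashv i_*$ is a natural isomorphism; in particular the canonical comparison map $\Mor{i^*i_*X}{X}$ is an isomorphism for every sheaf $X$.

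With this in hand, suppose $X:\Sh{\CCat,J}$ is such that $i_*X$ is a $\lambda$-compact object of $\Psh{\CCat}$. Since by hypothesis $i^*$ preserves $\lambda$-compact objects, $i^*i_*X$ is a $\lambda$-compact object of $\Sh{\CCat,J}$; and as $i^*i_*X\cong X$, the object $X$ is itself $\lambda$-compact. Since $X$ was arbitrary, this shows that $i_*$ reflects $\lambda$-compact objects.

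The argument is essentially immediate, so there is no real obstacle. The one point worth flagging is that it uses only the idempotency of the reflection ($i^*i_*\cong\mathrm{Id}$), not any assertion that $i_*$ itself preserves $\lambda$-filtered colimits --- indeed $i_*$ need not preserve them for the $\lambda$ at hand, which is precisely why the reflection of $\lambda$-compactness must be routed through $i^*$ rather than argued directly from full faithfulness of $i_*$ alone.
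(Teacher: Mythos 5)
Your argument is correct and is exactly the paper's proof: since $i^*i_*X\cong X$ (the counit of the reflective embedding is an isomorphism) and $i^*$ preserves $\lambda$-compact objects by hypothesis, $X$ inherits $\lambda$-compactness from $i_*X$. No differences worth noting.
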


\begin{proof}
  Let $E\in\Sh{\CCat,J}$ be such that $i_*E$ is $\lambda$-compact; because $i^*$
  preserves $\lambda$-compact objects, $i^*i_*E \cong E$ is $\lambda$-compact.
\end{proof}

Combining the above result with the characterization of $\kappa$-compact objects
given by \cref{lem:small-iff-compact}, we deduce the following.

\begin{corollary}
  \label[corollary]{cor:sharply-larger-facts}
  Given a regular cardinal $\lambda$ sharply larger than both
  $\lambda_0$ and $\verts{\CCat}$, the following properties hold:
  \begin{enumerate}
  \item $\ECat$ is locally $\lambda$-presentable.
  \item The $\lambda$-compact objects in $\ECat$ are closed under finite limits.
  \end{enumerate}

  If $\lambda$ is further assumed to be strongly inaccessible, then we
  additionally have:
  \begin{enumerate}[resume]
  \item The set $\Hom[\ECat]{X}{Y}$ between two $\lambda$-compact objects $X,Y$ is $\lambda$-small.
  \end{enumerate}
\end{corollary}

\begin{lemma}
  \label[lemma]{lem:dir-image-pres-and-reflects}
  Given a regular cardinal $\lambda$ sharply larger than both $\lambda_0$ and $\verts{\CCat}$,
  the direct image functor $i_*$ preserves and reflects
  relatively $\lambda$-compact morphisms.
\end{lemma}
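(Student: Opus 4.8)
The plan is to reduce everything to the behaviour of $i^*$ and $i_*$ on $\lambda$-compact objects, which is exactly what our hypotheses on $\lambda$ control. Recall that the geometric embedding is reflective, so $i_*$ is fully faithful and $i^*i_* \cong \mathrm{id}$; that $i_*$, as a right adjoint, preserves all limits, in particular pullbacks; that by \cref{lem:incl-preserves-compactness} both $i^*$ and $i_*$ preserve $\lambda$-compact objects; that by \cref{lem:incl-reflects-compactness} $i_*$ moreover reflects $\lambda$-compact objects; and that by \cref{lem:small-iff-compact} the $\lambda$-compact objects of $\Psh{\CCat}$ are precisely the $\lambda$-small-valued presheaves, a class closed under finite limits since these are computed pointwise.

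Reflection is immediate. Assuming $i_*f$ is relatively $\lambda$-compact, fix a $\lambda$-compact object $Z:\Sh{\CCat,J}$ and a map $\Mor[g]{Z}{Y}$; since $i_*$ preserves $\lambda$-compact objects, $i_*Z$ is $\lambda$-compact, so $i_*Z \times_{i_*Y} i_*X$ is $\lambda$-compact in $\Psh{\CCat}$, and as $i_*$ preserves pullbacks this object is $i_*\prn{Z\times_Y X}$. Since $i_*$ reflects $\lambda$-compact objects, $Z\times_Y X$ is $\lambda$-compact, so $f$ is relatively $\lambda$-compact.

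For preservation, assume $f$ is relatively $\lambda$-compact and fix a $\lambda$-compact object $Z:\Psh{\CCat}$ together with a map $\Mor[g]{Z}{i_*Y}$; we must show the fibre $Z\times_{i_*Y} i_*X$ is $\lambda$-compact. Transpose $g$ to $\Mor[\bar g]{i^*Z}{Y}$; the triangle identities express $g$ as the composite of the unit $\Mor[\eta_Z]{Z}{i_*i^*Z}$ with $\Mor[i_*\bar g]{i_*i^*Z}{i_*Y}$. Pasting pullbacks along this factorization, and using that $i_*$ preserves them, identifies the fibre with $Z\times_{i_*i^*Z} i_*\prn{i^*Z\times_Y X}$. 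Here $i^*Z$ is $\lambda$-compact, so $i^*Z\times_Y X$ is $\lambda$-compact by relative $\lambda$-compactness of $f$, and hence so is $i_*\prn{i^*Z\times_Y X}$; likewise $i_*i^*Z$ is $\lambda$-compact; and $Z$ is $\lambda$-compact by hypothesis. Thus the fibre is a pullback of $\lambda$-compact presheaves, hence $\lambda$-compact. The one delicate point is this last step: the fibre $Z\times_{i_*Y} i_*X$ is generally not a sheaf, so we cannot argue for its compactness inside $\Sh{\CCat,J}$ directly; it is precisely the detour through the unit $\eta_Z$, combined with the pointwise closure of $\lambda$-compact presheaves under finite limits, that bridges the gap.
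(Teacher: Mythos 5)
Your proof is correct and follows essentially the same route as the paper's: reflection via preservation of pullbacks and reflection of $\lambda$-compact objects, and preservation via factoring $\Mor{Z}{i_*Y}$ through the unit $\eta_Z$, pasting pullbacks, and concluding by closure of $\lambda$-compact presheaves under finite limits. The only cosmetic difference is that you justify that last closure property pointwise via \cref{lem:small-iff-compact}, where the paper cites \cref{cor:sharply-larger-facts}; both are fine.
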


\begin{proof}
  We handle preservation and reflection separately.

  \emph{Preservation.}
  Let $\Mor{X}{Y}$ be a relatively $\lambda$-compact morphism in $\ECat$. We
  must check that $\Mor{i_*X}{i_*Y}$ is relatively $\lambda$-compact.
  Fixing a $\lambda$-compact object $Z \in \Psh{\CCat}$ along with a map
  $\Mor{Z}{i_*Y}$, it suffices to argue that the fiber product $W = Z
  \times_{i_*Y} i_*X$ is $\lambda$-compact:
  \[
    \DiagramSquare{
      nw = W,
      nw/style = pullback,
      sw = Z,
      ne = i_*X,
      se = i_*Y,
    }
  \]

  Observe that $\Mor{Z}{i_*Y}$ factors uniquely through
  $\Mor[\eta_Z]{Z}{i_*i^*Z}$. As $i_*$ preserves cartesian squares, we can
  factor the above cartesian square as follows:
  \[
    \begin{tikzpicture}[diagram]
      \SpliceDiagramSquare<l/>{
        width = 2.75cm,
        nw = W,
        nw/style = pullback,
        sw = Z,
        ne = i_*\prn{i^* Z \times_Y X},
        ne/style = pullback,
        se = i_*i^*Z,
      }
      \SpliceDiagramSquare<r/>{
        width = 2.75cm,
        glue = west,
        glue target = l/,
        ne = i_*X,
        se = i_*Y,
      }
    \end{tikzpicture}
  \]

  Recalling that $i^*$ preserves $\lambda$-compact objects
  (\cref{lem:incl-preserves-compactness}), $i^*Z$ is $\lambda$-compact and
  consequently so too is $i^*Z \times_{Y} X$. By
  \cref{lem:incl-preserves-compactness} again, both $i_*i^*Z$ and
  $i_*\prn{i^*Z \times_{Y} X}$ are $\lambda$-compact. Finally, $W$ is
  $\lambda$-compact as the finite limit of $\lambda$-compact objects
  (\cref{cor:sharply-larger-facts}).

  \emph{Reflection.}
  Let $\Mor{X}{Y}$ be a morphism in $\ECat$ such $\Mor{i_*X}{i_*Y}$ is
  relatively $\lambda$-compact in $\Psh{\CCat}$. Fixing a morphism
  $\Mor{Z}{Y}$ with $Z$ a $\lambda$-compact object, we must check that the
  fiber product $W$ below is $\lambda$-compact:
  \[
    \DiagramSquare{
      nw/style = pullback,
      north/style = {exists,->},
      west/style = {exists,->},
      nw = W,
      sw = Z,
      se = Y,
      ne = X,
    }
  \]

  The right adjoint $i_*$ preserves $\lambda$-compact objects by assumption
  (\cref{lem:incl-preserves-compactness}) and hence $i_*Z$ is
  $\lambda$-compact; because $i_*$ also preserves pullbacks, we deduce that
  $i_*W$ is a $\lambda$-compact object in $\Psh{\CCat}$:
  \[
    \DiagramSquare{
      nw/style = pullback,
      nw = i_*W,
      sw = i_*Z,
      se = i_*Y,
      ne = i_*X,
    }
  \]

  Finally, \cref{lem:incl-reflects-compactness} implies $W$ is $\lambda$-compact.
\end{proof}

\begin{remark}
  The proof of \cref{lem:dir-image-pres-and-reflects} establishes a more general
  result: a right adjoint $\Mor[G]{\CCat}{\DCat}$ between finitely complete
  categories preserves relatively $\kappa$-compact families provided both
  adjoints preserve $\kappa$-compact objects and $\kappa$-compact objects in
  $\DCat$ are closed under finite limits. If $G$ is additionally assumed to
  reflect $\kappa$-compact objects, it reflects $\kappa$-compact families.
\end{remark}

Combining the above results with \cref{thm:s}, we obtain the following result:

\begin{theorem}
  \label[theorem]{thm:compact-universe}
  There exists a cardinal $\kappa$ such that for any strongly inaccessible $\lambda \SharplyGt \kappa$, $\ECat$ is locally
  $\lambda$-presentable and the class of relatively $\lambda$-compact maps in $\ECat$ form a
  universe $\Cls_\lambda$ satisfying \textbf{(U1--7)} and $\lambda$-compact objects are closed under
  finite limits.
\end{theorem}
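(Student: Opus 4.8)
The plan is to recognize $\Cls_\lambda$ as an instance of Streicher's sheaf universe $\ShCls\Sub{\VTY}$ for a suitably chosen Grothendieck universe $\VTY$, at which point \cref{thm:s} delivers axioms \textbf{(U1--6)}; the remaining axiom \textbf{(U7)} and the two ancillary conclusions come directly from earlier results.

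First I would fix a site presentation $\ECat = \Sh{\CCat, J}$, let $\lambda_0$ be the cardinal supplied by \cref{lem:incl-preserves-compactness}, and choose $\kappa$ large enough that every strongly inaccessible $\lambda \SharplyGt \kappa$ is sharply larger than both $\lambda_0$ and $\verts{\CCat}$ (and hence in particular satisfies $\lambda > \verts{\CCat}$); this is possible by the footnote accompanying the definition of $\SharplyGt$ together with the transitivity of that relation. For such a $\lambda$, \cref{cor:sharply-larger-facts} immediately gives that $\ECat$ is locally $\lambda$-presentable and that its $\lambda$-compact objects are closed under finite limits, and \cref{lem:cls-kappa-satisfies-descent}, read with $\lambda$ in place of $\kappa$, gives that $\Cls_\lambda$ satisfies \textbf{(U7)}.

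It then remains to establish \textbf{(U1--6)}, and the crux is the identification $\Cls_\lambda = \ShCls\Sub{\VTY}$ with $\VTY \coloneqq V_\lambda$. Since $\lambda$ is strongly inaccessible, $V_\lambda$ is a Grothendieck universe whose small sets are precisely those of cardinality $<\lambda$, and $\CCat$ is $\VTY$-small because $\verts{\CCat} < \lambda$. Working first in $\Psh{\CCat}$: by \cref{lem:small-iff-compact} a presheaf is $\lambda$-compact exactly when it is $\VTY$-valued, so the defining condition of $\PrCls\Sub{\VTY}$ --- that every pullback of $f$ to a representable be $\VTY$-valued --- coincides, via \cref{lem:rel-compact-iff-repr-compact}, with relative $\lambda$-compactness; hence $\PrCls\Sub{\VTY}$ is the class of relatively $\lambda$-compact maps in $\Psh{\CCat}$. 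Passing to sheaves, $f \in \ShCls\Sub{\VTY}$ means $i_*f$ is relatively $\lambda$-compact in $\Psh{\CCat}$, which by \cref{lem:dir-image-pres-and-reflects} is equivalent to $f$ being relatively $\lambda$-compact in $\ECat$, i.e.\ $f \in \Cls_\lambda$. Thus $\Cls_\lambda = \ShCls\Sub{\VTY}$, and \cref{thm:s} transports \textbf{(U1--6)} to $\Cls_\lambda$; together with \textbf{(U7)} and the ancillary facts already noted, this would finish the argument.

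I expect the main obstacle to be purely bookkeeping: arranging the cardinal $\kappa$ so that the single hypothesis ``$\lambda$ strongly inaccessible and $\lambda \SharplyGt \kappa$'' simultaneously activates \cref{lem:small-iff-compact}, \cref{lem:rel-compact-iff-repr-compact}, \cref{lem:dir-image-pres-and-reflects}, \cref{cor:sharply-larger-facts}, and the requirement that $V_\lambda$ be a Grothendieck universe containing $\CCat$. Since all of these are governed by $\lambda_0$, $\verts{\CCat}$, and strong inaccessibility, the transitivity of $\SharplyGt$ and the fact that it refines the usual order on cardinals make this routine, and no genuinely new argument is needed beyond assembling the cited lemmas.
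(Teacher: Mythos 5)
Your proposal is correct and follows essentially the same route as the paper: the paper's proof likewise chooses $\kappa$ sharply larger than $\lambda_0$ and $\verts{\CCat}$, invokes \cref{cor:sharply-larger-facts} and \cref{lem:cls-kappa-satisfies-descent} for the ancillary claims and \textbf{(U7)}, and obtains \textbf{(U1--6)} by combining \cref{thm:s} with \cref{lem:small-iff-compact}, \cref{lem:rel-compact-iff-repr-compact}, and \cref{lem:dir-image-pres-and-reflects}. Your write-up simply makes explicit the identification $\Cls_\lambda = \ShCls\Sub{\VTY}$ with $\VTY = V_\lambda$ that the paper leaves implicit in that combination.
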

\begin{proof}
  We define $\kappa$ to be any regular cardinal sharply larger than both $\lambda_0$ and $\verts{\CCat}$. We first recall that $\ECat$
  is locally $\lambda$-presentable and that $\lambda$-compact objects are closed under finite limits
  by \cref{cor:sharply-larger-facts}. Next, \cref{thm:s} combined with
  \cref{lem:dir-image-pres-and-reflects,lem:rel-compact-iff-repr-compact,lem:small-iff-compact}
  ensures that for any $\lambda \SharplyGt \kappa$, the universe $\Cls_\lambda$ satisfies
  \textbf{(U1--6)}. Finally, we have established that $\Cls_\lambda$ satisfies \textbf{(U7)} in
  \cref{lem:cls-kappa-satisfies-descent}.
\end{proof}

\begin{definition}
  We write $\Threshold\prn{\ECat}$ for the cardinal $\kappa$ provided by
  \cref{thm:compact-universe}.
\end{definition}

\begin{corollary}\label[corollary]{lem:pres-sl-ess-small}
  For any strongly inaccessible $\lambda \SharplyGt \Threshold\prn{\ECat}$, the
  full subcategory of $\Sl{\ECat}{Y}$ spanned by relatively $\lambda$-compact
  maps is essentially small.
\end{corollary}

\begin{proof}
  Writing $\Mor[\El_\lambda]{\EL_\lambda}{\TY_\lambda}$ for the generic map of $\Cls_\lambda$, this
  subcategory of $\Sl{\ECat}{Y}$ is bounded by $\Hom{Y}{\TY_\lambda}$.
\end{proof}

\begin{lemma}
  \label[lemma]{lem:small-set-of-monos}
  For any strongly inaccessible $\lambda \SharplyGt \Threshold\prn{\ECat}$,
  there exists a $\lambda$-small set of monomorphisms $\GenCls$ generating all
  monomorphisms in $\ECat$ under pushout, transfinite composition, and retracts.
  Moreover, the domains and codomains of morphisms in $\GenCls$ are $\lambda$-compact.
\end{lemma}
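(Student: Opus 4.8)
The plan is to produce $\GenCls$ directly, as a set of monomorphisms between $\lambda$-compact objects, and then to realize every monomorphism as a transfinite composite of pushouts of its members. Fix $\lambda$ as in \cref{thm:compact-universe}: strongly inaccessible, larger than $\verts{\CCat}$, and --- by \cref{lem:incl-preserves-compactness} --- such that $i^*$ and $i_*$ preserve $\lambda$-compact objects. I would take $\GenCls$ to be a set of representatives for the isomorphism classes of those monomorphisms $S\rightarrowtail Q$ in $\ECat$ for which $Q$ is a quotient of a sheafified representable $i^*\Yo{C}$ and $S$ is a subobject of $Q$.

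The first thing to check is that $\lambda$-compactness in $\ECat$ is closed under subobjects: if $E'\le E$ with $E$ $\lambda$-compact, then $i_*E'$ is a sub-presheaf of the $\lambda$-small presheaf $i_*E$ (as $i_*$ preserves monomorphisms, and $\lambda>\verts{\CCat}$), hence $\lambda$-small and so $\lambda$-compact in $\Psh{\CCat}$ by \cref{lem:small-iff-compact}; applying $i^*$ and using $i^*i_*E'\cong E'$ then shows $E'$ is $\lambda$-compact. Together with closure of $\lambda$-compact objects under finite limits (\cref{thm:compact-universe}) and under finite colimits \citep{adamek-rosicky:1994}, this also makes images and quotients of $\lambda$-compact objects $\lambda$-compact, since such objects are coequalizers of congruences and a congruence on $E$ is a subobject of the $\lambda$-compact object $E\times E$. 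It follows that the domains and codomains of the members of $\GenCls$ are $\lambda$-compact --- the \emph{moreover} clause --- and that $\GenCls$ is $\lambda$-small: there are fewer than $\lambda$ objects of $\CCat$, each $\lambda$-compact object has fewer than $\lambda$ subobjects and fewer than $\lambda$ quotients up to isomorphism (as $\lambda$ is a strong limit), and a $\lambda$-small union of $\lambda$-small sets is $\lambda$-small since $\lambda$ is regular.

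The remaining task is generation. Monomorphisms in the topos $\ECat$ are closed under pushout (adhesivity; \cref{sec:descent}), transfinite composition (\cref{lem:filtered-colim-inj-mono}), and retracts, and every member of $\GenCls$ is a monomorphism, so the closure of $\GenCls$ under these operations lies inside the monomorphisms; only the reverse inclusion needs an argument. So fix a monomorphism $m\colon A\rightarrowtail B$. Since $\ECat$ is a topos, the subobjects of $B$ form a set; and since the sheafified representables $\{i^*\Yo{C}\}$ form a family of generators for $\ECat$ (every sheaf being a colimit of sheafified representables), for each subobject $B_0\le B$ with $B_0\neq B$ there is a map $g\colon i^*\Yo{C}\to B$ not factoring through $B_0$, whence $Q\coloneqq\mathrm{im}\,g\le B$ is a quotient of a generator with $Q\not\le B_0$. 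Now construct a non-decreasing chain of subobjects of $B$ with $A_0=A$: at a successor stage, if $A_\beta=B$ stop, and otherwise take such a $Q$ for $B_0\coloneqq A_\beta$ and set $A_{\beta+1}\coloneqq A_\beta\cup Q$; take unions at limit stages. Each successor step $A_\beta\rightarrowtail A_{\beta+1}$ is the pushout of $A_\beta\cap Q\rightarrowtail Q$ --- a member of $\GenCls$ up to isomorphism --- along $A_\beta\cap Q\rightarrowtail A_\beta$, because in a topos the join of two subobjects is their pushout over their meet; and the limit-stage unions remain subobjects of $B$, a directed union of subobjects being computed as a filtered colimit (cf.\ \cref{lem:filtered-colim-inj-mono}). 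The chain increases strictly at each non-terminal successor, so by well-poweredness it stabilizes, and it can stabilize only at $B$; this presents $m$ as a transfinite composite of pushouts of maps in $\GenCls$.

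The crux, and the only place the hypotheses inherited from \cref{thm:compact-universe} genuinely enter, is the closure of $\lambda$-compactness under subobjects and quotients (via \cref{lem:small-iff-compact,lem:incl-preserves-compactness}): this is precisely what makes $\GenCls$ at once $\lambda$-small --- where strong inaccessibility and regularity of $\lambda$ are indispensable --- and a family of maps between $\lambda$-compact objects. A secondary subtlety is to attach at each stage an image $\mathrm{im}\,g$ of a generator rather than the generator $i^*\Yo{C}$ itself: only then is $A_\beta\cup Q$ simultaneously a subobject of $B$ and the pushout of the attached cell, whereas a pushout formed along a map out of $i^*\Yo{C}$ directly need not embed into $B$.
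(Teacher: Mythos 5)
Your argument is correct, but it takes a different route than the paper's. The paper reduces the problem to presheaves: it cites a result of Beke (Proposition~1.12 of \citep{beke:2000}) that the set $J$ of monomorphisms $A\rightarrowtail B$ in $\Psh{\CCat}$ with $B$ a quotient of a representable generates all monomorphisms, bounds $|J|$ by counting maps $\Yo{C}\times\Yo{C}\to\Omega$ and sub-presheaves, and then transports along the geometric embedding, setting $\GenCls\coloneqq i^*J$ and using that $i_*$ preserves monos while $i^*$ is cocontinuous and left exact. You instead build $\GenCls$ directly in $\ECat$ as the sub-quotients of the generators $i^*\Yo{C}$, and prove generation from scratch via the transfinite cell-attachment argument: pick a generator-image not contained in the current subobject, adjoin it by the pushout-over-pullback formula $A_\beta\cup Q\cong A_\beta\sqcup_{A_\beta\cap Q}Q$ (valid by adhesivity), and stabilize by well-poweredness. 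The trade-off is roughly this: the paper's route is shorter, given Beke's theorem as a black box, and keeps the cardinality bookkeeping entirely inside $\Psh{\CCat}$ where $\lambda$-small has a concrete levelwise meaning; your route is self-contained and stays intrinsic to $\ECat$, at the cost of needing closure of $\lambda$-compactness under subobjects and quotients, which you establish correctly by round-tripping through $i_*$ and \cref{lem:small-iff-compact} and invoking closure of $\lambda$-compact objects under finite colimits. Your observation that one must attach the \emph{image} $\mathrm{im}\,g$ rather than the representable itself is exactly right and is the point where this cell-attachment style for cofibrantly generating monomorphisms differs from the usual one for generating (trivial) cofibrations, where images are unavailable.

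One small point of bookkeeping worth making explicit: counting subobjects and quotients of a $\lambda$-compact sheaf $E$ is most safely done via $i_*E$, which is a levelwise $\lambda$-small presheaf; the number of sub-presheaves is then bounded by $\prod_{c}2^{|(i_*E)(c)|}\le\lambda^{|\CCat|}=\lambda$, using both strong inaccessibility (each factor $<\lambda$) and regularity of $\lambda$. You gesture at this but do not say where the subobject count happens; in $\ECat$ directly it is less obvious since $\Omega_{\ECat}$ need not be $\lambda$-compact. This does not affect correctness, only clarity.
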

\begin{proof}
  \citet[Proposition 1.12]{beke:2000} shows that the collection of sub-quotients of representables
  $J$ generate all monomorphisms in $\Psh{\CCat}$. Explicitly, $J$ is the collection of
  monomorphisms $\Mor|>->|{A}{B}$ where $B$ is the quotient of a representable $\Yo{C}$. As
  $\Psh{\CCat}$ is both well-powered and co-well-powered there is essentially a set of such
  monomorphisms.

  A quotient of a representable $\Yo{C}$ is determined by a morphism
  $\Mor{\Yo{C} \times \Yo{C}}{\Omega}$. As $\lambda > \verts{\CCat}$, $\Omega$ is $\lambda$-small
  and there is a $\lambda$-small set of representables therefore $J$ may be chosen to be
  $\lambda$-small. Finally, the domains and codomains of monomorphisms in $J$ are $\lambda$-small,
  since they are subquotients of representables which are $\lambda$-small;
  and by \cref{lem:small-iff-compact}, this implies they are $\lambda$-compact.

  We now define $\GenCls \subset \Hom[\ECat]$ as the image of $J$ under $i^*$. As $i_*$ preserves
  monomorphisms and $i^*$ preserves all colimits, $\GenCls$ generates all monomorphisms in $\ECat$
  under pushout, transfinite composition, and retracts. The domains and codomains of morphisms in
  $\GenCls$ are seen to be $\lambda$-compact by \cref{lem:incl-preserves-compactness}.
\end{proof}

\section{Main result: a universe satisfying realignment}\label{sec:universe}

Let $\ECat$ be a Grothendieck topos and fix a strongly inaccessible cardinal
$\kappa \SharplyGt \Threshold\prn{\ECat}$.
We have previously shown that $\Cls_\kappa$ satisfies \textbf{(U1--7)}. We construct a new generic
map for this class and thereby conclude that $\Cls_\kappa$ satisfies \textbf{(U8)}.

\subsection{Saturation of solvable realignment problems}\label{sec:saturation}

In \cref{def:realignment} we specified what it means for a universe to have
realignment for a class of monomorphisms $\mathcal{M}$. On the other
hand, any pullback-stable class of maps $\Cls$ and morphism $\Mor[\pi]{E}{U}\in \Cls$
determines a class $\GlCls$ of monomorphisms along which realignment problems can
be solved (regardless of whether $\Cls$ is a universe and whether $\pi$ is
generic).

\begin{notation}\label[notation]{nota:gl-cls}
  We will write $\GlCls$ for the set of all monomorphisms in $\ECat$
  with respect to which $\prn{\Cls,\pi}$ satisfies the realignment property.
\end{notation}

We will establish the closure of $\GlCls$ under pushout,
transfinite composition, and retracts.

\begin{lemma}
  The class of realignable monomorphisms $\GlCls$ is stable under pushout.
\end{lemma}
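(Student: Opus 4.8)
I want to show that if $\Mor|>->|[m]{A}{B}\in\GlCls$ and
\[
  \begin{tikzpicture}[diagram,baseline=(sq/sw.base)]
    \SpliceDiagramSquare<sq/>{
      width = 2.5cm,
      height = 2cm,
      nw = A,
      sw = B,
      ne = A',
      se = B',
      west = m,
      east = m',
      west/style = >->,
      east/style = >->,
    }
  \end{tikzpicture}
\]
is a pushout square, then $m'\in\GlCls$; here $m'$ is a monomorphism because monomorphisms are stable under pushout in a topos. So fix a realignment problem for $m'$, unfolding \cref{rem:unfolded-realignment}: a map $\Mor[f]{Q}{B'}\in\Cls$ together with a cartesian square $\Mor[\chi]{{m'}^*f}{\pi}$, and I must extend it along $m'$ to a cartesian square $\Mor{f}{\pi}$. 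The idea is to pull the problem back along $B\to B'$ to get a realignment problem for $m$, solve it using $m\in\GlCls$, and then glue.

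\emph{The steps.} First, form $g \coloneqq (B\to B')^*f$, a map into $B$ lying in $\Cls$ by stability \textbf{(U1)}. Pulling $\chi$ back along the square, and using that $(B\to B')^*\circ{m'}^* = m^*\circ(A\to A')^*$ (since the square is cartesian as a commuting square of monos — or more simply, just that all the relevant squares compose), we obtain a cartesian map $m^*g\to\pi$. Since $m\in\GlCls$, this extends along $m$ to a cartesian square $\Mor[\psi]{g}{\pi}$. Now I have two cartesian squares into $\pi$: the given $\chi$ over $A'\hookrightarrow B'\xleftarrow{} Q$ (i.e.\ $\Mor{{m'}^*f}{\pi}$), and the newly built $\psi$ over $B\hookrightarrow B'\xleftarrow{} Q$ (i.e.\ $\Mor{g}{\pi}$). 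The key compatibility to check is that these two agree after restricting to $A$: restricting $\chi$ along $A\to A'$ gives a cartesian square over $(A\to A')^*{m'}^*f = m^*g$, and this is precisely the square we used to build $\psi$ via the realignment along $m$ — so by construction $\psi$ restricted along $m$ equals $\chi$ restricted along $A\to A'$. Because $A = B\times_{B'} A'$ is a pullback (the original square) and $B\sqcup_A A' \cong B'$ is the corresponding pushout, the classifying maps $Q\to E$ and $B\to U$, $A'\to U$ agreeing over $A$ glue to a single map $B'\to U$ (respectively $Q\to E$), using the universal property of the pushout $B' = B\sqcup_A A'$; this yields the desired cartesian square $\Mor{f}{\pi}$ extending $\chi$ along $m'$. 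One must also confirm the glued square is cartesian, which follows since being cartesian can be checked after pulling back along the jointly-epic pair $B\to B'\leftarrow A'$ (here using that the pushout square for monos is also a pullback and that $B, A'$ jointly cover $B'$ — or, more carefully, van Kampen/descent for pushouts along monomorphisms, \cref{def:descent}), and over each leg it is cartesian by $\psi$ and $\chi$ respectively.

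\emph{The main obstacle.} The routine part is the bookkeeping of pullback squares; the delicate point is the gluing of the classifying maps and the verification that the result is genuinely cartesian. Concretely, I need that a pair of cartesian squares into $\pi$ over $B\to B'$ and $A'\to B'$ which agree over the intersection $A$ assemble into a cartesian square over $B'$ — this is exactly an instance of descent for the pushout $B' = B\sqcup_A A'$ along the monomorphism $m$ (adhesivity, item (2) in the list following \cref{lem:colim-in-cart-arr}), applied in $\CartArrCat{\ECat}$ via \cref{lem:colim-in-cart-arr}. So the cleanest phrasing is: the realignment data over $B$ (namely $\psi$) and over $A'$ (namely $\chi$) form a cocone on the pushout diagram $B\hookleftarrow A\hookrightarrow A'$ in $\CartArrCat{\ECat}$, whose colimit is a cartesian square over $B' = B\sqcup_A A'$ mapping to $\pi$; restricting it along $m'$ recovers $\chi$ since $A'\to B'$ is a leg of the colimit cocone. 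I expect the only real care is making sure the two squares $\psi|_A$ and $\chi|_A$ are literally equal (not merely isomorphic), which holds because $\psi$ was defined by realigning precisely $\chi|_A$ along $m$, so $\psi|_A = \chi|_A$ on the nose by the statement of the realignment property.
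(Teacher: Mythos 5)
Your proof is correct and takes essentially the same route as the paper's: both pull back the realignment problem along the other leg $B\to B'$ of the pushout to get a realignment problem over $m\in\GlCls$, solve it, and then use the universal property of the induced pushout in $\CartArrCat{\ECat}$ (available by \cref{lem:colim-in-cart-arr} and adhesivity) to assemble the answer over $B'$. The paper leaves the cartesianness of the glued square implicit in \cref{lem:colim-in-cart-arr}, whereas you spell out the descent argument for it directly; this is a difference of exposition, not of substance.
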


\begin{proof}
  Fix $\Mor|>->|{A}{B}\in\GlCls$ and a pushout diagram in the following configuration:
  \begin{equation}\label[diagram]{diag:gl-po-st:0}
    \DiagramSquare{
      nw = A,
      sw = B,
      ne = C,
      se = D,
      west/style = >->,
      east/style = >->,
      se/style = pushout,
    }
  \end{equation}

  We must show that $\Mor|>->|{C}{D}\in\GlCls$; to that end, we fix a realignment
  problem $\GlSpan{f}{h}{\pi}$ whose extent lies over $\Mor|>->|{C}{D}$.
  \begin{equation}\label[diagram]{diag:gl-po-st:1}
    \begin{tikzpicture}[diagram,baseline=(sq/sw.base)]
      \SpliceDiagramSquare<sq/>{
        width = 3.5cm,
        nw = h,
        sw = C,
        ne = \pi,
        se = U,
        west/style = lies over,
        east/style = lies over,
        south/style = {color = {LightGray}}
      }
      \node (n) [between = sq/nw and sq/ne, yshift = -0.75cm] {$f$};
      \node (s) [between = sq/sw and sq/se, yshift = -0.75cm] {$D$};
      \node (A) [left = of sq/sw] {$A$};
      \node (B) [left = of s] {$B$};
      \path[->] (A) edge (sq/sw);
      \path[->] (B) edge (s);
      \path[>->] (A) edge (B);
      \path[lies over] (n) edge (s);
      \path[>->] (sq/nw) edge (n);
      \path[>->] (sq/sw) edge (s);

      \node (Arr/E) [right = of sq/ne] {$\ArrCat{\ECat}$};
      \node (E) [right = of sq/se] {$\ECat$};
      \path[fibration] (Arr/E) edge (E);
    \end{tikzpicture}
  \end{equation}

  We will transform the realignment problem of \cref{diag:gl-po-st:1} into one that we
  can already solve; first we fill in the cartesian lifts over the pushout
  square in the base.
  \begin{equation}\label[diagram]{diag:gl-po-st:2}
    \begin{tikzpicture}[diagram,baseline=(sq/sw.base)]
      \SpliceDiagramSquare<sq/>{
        width = 3.5cm,
        nw = h,
        sw = C,
        ne = \pi,
        se = U,
        east/style = lies over,
        south/style = {color = {LightGray}},
        west/style = {lies over,color = {LightGray}}
      }
      \node (f) [between = sq/nw and sq/ne, yshift = -0.75cm] {$f$};
      \node (D) [between = sq/sw and sq/se, yshift = -0.75cm] {$D$};
      \node (A) [left = 3.5cm of sq/sw] {$A$};
      \node (B) [left = 3.5cm of D] {$B$};
      \node (A*h) [left = 3.5cm of sq/nw] {$A^*h$};
      \node (B*f) [left = 3.5cm of f] {$B^*f$};
      \path[->,exists,color=magenta] (A*h) edge (sq/nw);
      \path[lies over] (A*h) edge (A);
      \path[>->,exists,color=magenta] (A*h) edge (B*f);
      \path[->,exists,color=magenta] (B*f) edge (f);
      \path[lies over] (B*f) edge (B);

      \path[->,color=LightGray] (A) edge (sq/sw);
      \path[->] (B) edge (D);
      \path[>->] (A) edge (B);
      \path[lies over] (f) edge (D);
      \path[>->,color=magenta] (sq/nw) edge (f);
      \path[>->] (sq/sw) edge (D);

      \node (Arr/E) [right = of sq/ne] {$\ArrCat{\ECat}$};
      \node (E) [right = of sq/se] {$\ECat$};
      \path[fibration] (Arr/E) edge (E);
    \end{tikzpicture}
  \end{equation}

  By the universality of colimits, the \textcolor{magenta}{upper face} is a
  pushout; therefore to solve our realignment problem, it suffices to find a
  map $\Mor{B^*f}{\pi}$ making the following square commute:
  \begin{equation}\label[diagram]{diag:gl-po-st:3}
    \DiagramSquare{
      nw = A^*h,
      sw = B^*f,
      ne = h,
      se = \pi,
      west/style = {>->,magenta},
      north/style = {->,magenta},
      south/style = {->,exists},
    }
  \end{equation}

  Because $\Cls$ is stable under pullback, we have $B^*f\in\Cls$; therefore
  \cref{diag:gl-po-st:3} is itself a realignment problem whose extent lies over an
  element of $\GlCls$.
\end{proof}

\begin{notation}
  We will write $\OrdToCat{\alpha}$ for the filtered poset of ordinal numbers
  $\beta < \alpha$.
\end{notation}

\begin{lemma}
  The class of realignable monomorphisms $\GlCls$ is stable under transfinite composition.
\end{lemma}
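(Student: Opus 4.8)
The statement is that $\GlCls$ is stable under transfinite composition, so fix an ordinal $\alpha$ and a chain $\Mor[F]{\OrdToCat{\alpha+1}}{\ECat}$ whose every successor step $\Mor|>->|{F\prn{\beta}}{F\prn{\beta+1}}$ lies in $\GlCls$ and which is \emph{continuous} — i.e.\ $F\prn{\lambda} = \Colim{\beta<\lambda}{F\prn{\beta}}$ at limit stages, so that the morphisms $\Mor|>->|{F\prn{\beta}}{F\prn{\lambda}}$ are the colimit injections, which are monomorphisms by \cref{lem:filtered-colim-inj-mono}. We must show the composite $\Mor|>->|{F\prn{0}}{F\prn{\alpha}}$ lies in $\GlCls$. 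To that end, fix a realignment problem $\GlSpan{f}{h}{\pi}$ whose extent lies over $\Mor|>->|{F\prn{0}}{F\prn{\alpha}}$: that is, $f\in\Cls$ with codomain $F\prn{\alpha}$, and a cartesian $\Mor{h}{\pi}$ where $h$ is the restriction of $f$ along $F\prn{0}\hookrightarrow F\prn{\alpha}$.

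The plan is to build, by transfinite recursion on $\beta\le\alpha$, a compatible family of cartesian maps $\Mor[\chi_\beta]{F\prn{\beta}^*f}{\pi}$ extending $h$, where $F\prn{\beta}^*f$ denotes the pullback of $f$ along $\Mor|>->|{F\prn{\beta}}{F\prn{\alpha}}$. At the base, $\chi_0$ is the given map $\Mor{h}{\pi}$. At a successor stage, supposing $\chi_\beta$ is defined, note that $\Mor|>->|{F\prn{\beta}}{F\prn{\beta+1}}\in\GlCls$ and that $F\prn{\beta}^*f$ is the pullback of the $\Cls$-map $F\prn{\beta+1}^*f$ along this monomorphism (by pasting of pullbacks against $F\prn{\beta+1}\hookrightarrow F\prn{\alpha}$); since $\Cls$ is pullback-stable, $F\prn{\beta+1}^*f\in\Cls$, so the square
\[
  \begin{tikzpicture}[diagram,baseline=(sw.base)]
    \node (nw) {$F\prn{\beta}^*f$};
    \node (sw) [below = 2cm of nw] {$F\prn{\beta+1}^*f$};
    \node (ne) [right = 3cm of nw] {$\pi$};
    \path[>->] (nw) edge (sw);
    \path[->] (nw) edge node [above] {$\chi_\beta$} (ne);
    \path[->,exists] (sw) edge (ne);
  \end{tikzpicture}
\]
is a realignment problem over an element of $\GlCls$, whose solution we take as $\chi_{\beta+1}$. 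At a limit stage $\lambda$, continuity of $F$ together with universality of colimits gives $F\prn{\lambda}^*f = \Colim{\beta<\lambda}{F\prn{\beta}^*f}$, this colimit being taken over the chain of the (monic, by pullback-stability) inclusions $\Mor|>->|{F\prn{\beta}^*f}{F\prn{\gamma}^*f}$; moreover the base chain $\beta\mapsto F\prn{\beta}$ is an ideal diagram and hence satisfies descent (\cref{lem:filtered-colimit-descent}), so by \cref{lem:colim-in-cart-arr} the colimit $F\prn{\lambda}^*f = \Colim{\beta<\lambda}{F\prn{\beta}^*f}$ computed in $\CartArrCat{\ECat}$ agrees with the one in $\ArrCat{\ECat}$, the inductively-constructed $\chi_\beta$ assemble into a cocone over $\pi$, and its mediating map $\Mor[\chi_\lambda]{F\prn{\lambda}^*f}{\pi}$ is cartesian. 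At the end, $\chi_\alpha$ solves the original problem, since $F\prn{\alpha}^*f = f$ and by construction $\chi_\alpha$ restricts along $F\prn{0}\hookrightarrow F\prn{\alpha}$ to $\chi_0 = $ the given $\Mor{h}{\pi}$.

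The main subtlety is the limit stage: one must confirm that the $\chi_\beta$ form a \emph{cocone} — that each triangle $\Mor{F\prn{\beta}^*f}{F\prn{\gamma}^*f}{\pi}$ (for $\beta\le\gamma<\lambda$) commutes — which follows because at each successor step $\chi_{\beta+1}$ was chosen to restrict to $\chi_\beta$ and the choices compose, and then to apply \cref{lem:colim-in-cart-arr} one must check that the restricted diagram $\beta\mapsto F\prn{\beta}^*f$ really lands in $\CartArrCat{\ECat}$, i.e.\ each $\Mor|>->|{F\prn{\beta}^*f}{F\prn{\gamma}^*f}$ is cartesian over $\Mor|>->|{F\prn{\beta}}{F\prn{\gamma}}$ — but this is automatic since both are pullback squares of $f$. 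A secondary point worth stating explicitly is that the recursion is well-founded precisely because at successor stages the realignability of $\Mor|>->|{F\prn{\beta}}{F\prn{\beta+1}}$ is used, and at limit stages nothing is chosen — the map is forced by the universal property — so the construction goes through for all $\beta\le\alpha$ by transfinite induction.
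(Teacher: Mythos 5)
Your proof is correct and follows essentially the same approach as the paper's: build the solution by transfinite recursion through the chain, invoking realignability of the successor monomorphisms at successor stages and the universal property of the colimit (plus descent for filtered colimits and \cref{lem:colim-in-cart-arr}) at limit stages. You are somewhat more explicit than the paper about verifying the cocone condition and the cartesianness of the mediating map, but these are just details the paper leaves implicit, not a different argument.
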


\begin{proof}
  Let $\Mor[F]{\OrdToCat{\alpha}}{\ECat}$ be a cocontinuous functor such that each
  $\Mor{F\prn{\beta}}{F\prn{\beta+1}}$ is an element of $\GlCls$. We must show that the transfinite
  composition $\Mor{F\prn{0}}{\Colim{\OrdToCat{\alpha}}{F}}$ is an element of $\GlCls$.
  We fix a realignment situation
  whose extent lies over some $\Mor|>->|{F\prn{0}}{\Colim{\OrdToCat{\alpha}}{F}}$:
  \begin{equation}\label[diagram]{diag:gl-trans-comp-st:0}
    \begin{tikzpicture}[diagram,baseline=(sq/sw.base)]
      \SpliceDiagramSquare<sq/>{
        width = 3.75cm,
        nw = f_0,
        sw = F\prn{0},
        ne = \pi,
        se = U,
        west/style = lies over,
        east/style = lies over,
        south/style = {color = {LightGray}}
      }
      \node (f) [between = sq/nw and sq/ne, yshift = -1cm] {$f$};
      \node (F/w) [between = sq/sw and sq/se, yshift = -1cm] {$\Colim{\OrdToCat{\alpha}}{F}$};
      \path[lies over] (f) edge (F/w);
      \path[>->] (sq/nw) edge (f);
      \path[>->] (sq/sw) edge (F/w);

      \node (Arr/E) [right = of sq/ne] {$\ArrCat{\ECat}$};
      \node (E) [right = of sq/se] {$\ECat$};
      \path[fibration] (Arr/E) edge (E);
    \end{tikzpicture}
  \end{equation}

  By the universality of colimits, we may replace
  $f$ with $\Colim{\OrdToCat{\alpha}}{f_\bullet}$ where we define
  $\Mor[f_\bullet]{\OrdToCat{\alpha}}{\CartArrCat{\ECat}}$ to extend $f_0$ by
  sending each $f_\beta$ to the following cartesian lift:
  \begin{equation}\label[diagram]{diag:gl-trans-comp-st:1}
    \begin{tikzpicture}[diagram,baseline=(sq/sw.base)]
      \SpliceDiagramSquare<sq/>{
        width = 2.5cm,
        nw/style = pullback,
        west/style = lies over,
        east/style = lies over,
        north/style = {exists,>->},
        south/style = >->,
        nw = f_\beta,
        sw = F\prn{\beta},
        ne = f,
        se = \Colim{\OrdToCat{\alpha}}{F}
      }
      \node (Arr/E) [right = of sq/ne] {$\ArrCat{\ECat}$};
      \node (E) [right = of sq/se] {$\ECat$};
      \path[fibration] (Arr/E) edge (E);
    \end{tikzpicture}
  \end{equation}

  Our realignment problem can therefore be rewritten as follows:
  \begin{equation}\label[diagram]{diag:gl-trans-comp-st:2}
    \begin{tikzpicture}[diagram,baseline=(sw.base)]
      \node (f/0) {$f_0$};
      \node (f) [below = of f/0] {$\Colim{\OrdToCat{\alpha}}{f_\bullet}$};
      \node (pi) [right = of f/0] {$\pi$};
      \draw[>->] (f/0) to (f);
      \draw[->] (f/0) to (pi);
    \end{tikzpicture}
  \end{equation}

  We will define the natural transformation
  $\Mor{\Colim{\OrdToCat{\alpha}}{f_\bullet}}{\pi}$ by transfinite induction on
  $\beta\leq \alpha$.  In the zero case, we use our existing map
  $\Mor{f_0}{\pi}$.  In the successor case, we assume a map
  $\Mor{f_\beta}{\pi}$ extending $\Mor|>->|{f_0}{f_\beta}$ and glue
  $\Mor{f_\beta}{\pi}$ along $\Mor|>->|{f_\beta}{f_{\beta+1}}\in\GlCls$:
  \begin{equation}\label[diagram]{diag:gl-trans-comp-st:3}
    \begin{tikzpicture}[diagram,baseline=(sw.base)]
      \node (f/0) {$f_0$};
      \node (f/beta) [below = of f/0] {$f_\beta$};
      \node (f/beta+1) [below = of f/beta] {$f_{\beta+1}$};
      \node (pi) [right = of f/0] {$\pi$};
      \draw[>->] (f/0) to (f/beta);
      \draw[>->] (f/beta) to (f/beta+1);
      \draw[->] (f/0) to (pi);
      \draw[->] (f/beta) to (pi);
      \draw[->,bend right=20,exists] (f/beta+1) to (pi);
    \end{tikzpicture}
  \end{equation}

  The limit case is trivial, as we may assemble all the prior solutions into a
  single one using the universal property of $f_\beta$ as
  $\Colim{\OrdToCat{\beta}}{f_\bullet}$:
  \begin{equation}\label[diagram]{diag:gl-trans-comp-st:4}
    \begin{tikzpicture}[diagram,baseline=(sw.base)]
      \node (f/0) {$f_0$};
      \node (f) [below = of f/0] {$\Colim{\OrdToCat{\beta}}{f_\bullet} = f_\beta$};
      \node (pi) [right = 3cm of f,yshift = -1cm] {$\pi$};
      \draw[>->] (f/0) to (f);
      \draw[->,bend left=30] (f/0) to (pi);
      \draw[->,exists] (f) to (pi);
      \node (f/alpha) [color=LightGray,left = 3cm of f,yshift=-1cm] {$f_{\alpha<\beta}$};
      \draw[>->,color=LightGray] (f/alpha) to (f);
      \draw[>->,bend right=30,color=LightGray] (f/0) to (f/alpha);
      \draw[->,bend right=30,color=LightGray] (f/alpha) to (pi);
    \end{tikzpicture}
  \end{equation}
  We note that $\Mor{\Colim{\OrdToCat{\beta}}{f_\bullet}}{\pi}$ remains
  cartesian by the universality of colimits.

  The extension to \cref{diag:gl-trans-comp-st:4} remains natural because
  we have merely combined the solutions to the smaller realignment problems.
\end{proof}

\begin{lemma}
  The class of realignable monomorphisms $\GlCls$ is closed under retracts.
\end{lemma}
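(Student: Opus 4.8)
The plan is to run the standard retract argument for left-lifting classes, adapted to realignment problems, in the same spirit as the preceding two lemmas. Suppose $\Mor|>->|[m]{A}{B}$ is a retract of $\Mor|>->|[n]{C}{D}\in\GlCls$ in $\ArrCat{\ECat}$, witnessed by a section $\prn{s,u}:m\to n$ and a retraction $\prn{r,v}:n\to m$ with $rs=\IdArr{A}$ and $vu=\IdArr{B}$ (and hence $us=\ldots$, $ns=um$, $mr=vn$). A retract of a monomorphism is again a monomorphism, so $m$ is itself a monomorphism; it remains to show that every realignment problem $\GlSpan{f}{h}{\pi}$ whose extent lies over $m$ admits a solution.

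First I would transport the problem across the retract. Given such a problem --- $f\in\Cls$ over $B$, a cartesian monomorphism $\Mor|>->|[\phi]{h}{f}$ over $m$, and a cartesian map $\Mor[\alpha]{h}{\pi}$ --- form $f'\coloneqq v^*f\in\Cls$ over $D$ (using pullback-stability of $\Cls$), together with the canonical cartesian monomorphism $\Mor|>->|[\phi']{n^*f'}{f'}$ over $n$. Since $vn=mr$, there is a canonical identification $n^*f'\cong r^*h$, hence a canonical cartesian map $\Mor[\psi]{n^*f'}{h}$ over $r$; setting $\alpha'\coloneqq\alpha\circ\psi$ yields a cartesian map $\Mor{n^*f'}{\pi}$. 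The triple $\prn{f',\phi',\alpha'}$ is then a realignment problem whose extent lies over $n\in\GlCls$, so it has a solution: a cartesian $\Mor[\gamma']{f'}{\pi}$ with $\gamma'\circ\phi'=\alpha'$.

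Next I would transport the solution back. Since $vu=\IdArr{B}$, there is a canonical identification $u^*f'\cong f$ and a canonical cartesian map $\Mor[\chi]{f}{f'}$ over $u$; define $\tilde\alpha\coloneqq\gamma'\circ\chi:f\to\pi$, cartesian as a composite of cartesian maps. To check the boundary condition $\tilde\alpha\circ\phi=\alpha$: the map $\chi\circ\phi:h\to f'$ lies over $um=ns$, so by cartesianness of $\phi'$ it factors as $\phi'\circ\delta$ for a unique $\Mor[\delta]{h}{n^*f'}$ over $s$; unwinding the identifications, $\psi\circ\delta$ is the identity cartesian lift of $h$ over $rs=\IdArr{A}$. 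Hence
\[
  \tilde\alpha\circ\phi = \gamma'\circ\chi\circ\phi = \gamma'\circ\phi'\circ\delta = \alpha'\circ\delta = \alpha\circ\psi\circ\delta = \alpha,
\]
so $\tilde\alpha$ solves the original problem and $m\in\GlCls$.

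The main obstacle is purely bookkeeping: keeping the cartesian lifts and the canonical identifications $u^*v^*f\cong f$ and $n^*v^*f\cong r^*h$ straight so that the strict equalities in the ``restricts to'' conditions hold on the nose rather than merely up to isomorphism. As elsewhere, this is managed by fixing a well-behaved choice of pullbacks for the codomain fibration (normalized so that identity lifts are identities); with that in place the displayed computation is routine. Note that, exactly as in the proofs for pushouts and transfinite composition, nothing here uses any property of $\pi$ or $\Cls$ beyond pullback-stability of $\Cls$.
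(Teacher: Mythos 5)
Your proof is correct and follows the same strategy as the paper's: pull back the realignment problem along the retraction to obtain a problem over the mono in $\GlCls$, solve it there, and then restrict the solution back along the section, using the fact that section followed by retraction is the identity to verify the boundary condition. The only differences are notational (you swap the roles of $A,B$ and $C,D$ relative to the paper) and expository (you spell out the bookkeeping that the paper leaves implicit in its diagrams). One small remark: your closing worry about needing a ``well-behaved choice of pullbacks'' is a bit overstated — the required strict equalities such as $\psi\circ\delta=\IdArr{h}$ and $\chi\circ\phi=\phi'\circ\delta$ can be resolved purely from the universal property of cartesian morphisms (compose with the cartesian projections down to $f$ and use uniqueness of factorizations over a fixed base map), with no need for a chosen cleavage, let alone a split one.
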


\begin{proof}
  We fix $\Mor|>->|[j]{A}{B}\in\GlCls$ and a retract $\Mor|>->|[i]{C}{D}$ of
  $j$ in $\ArrCat{\ECat}$:
  \begin{equation}\label[diagram]{diag:gl-ret-st:0}
    \begin{tikzpicture}[diagram,baseline=(l/sw.base)]
      \SpliceDiagramSquare<l/>{
        nw = C,
        sw = D,
        ne = A,
        se = B,
        west = i,
        east = j,
        west/style = >->,
        east/style = >->,
        east/node/style = upright desc,
      }
      \SpliceDiagramSquare<r/>{
        glue = west, glue target = l/,
        ne = C,
        se = D,
        east = i,
        east/style = >->,
      }
      \draw[double,bend left=30] (l/nw) to (r/ne);
      \draw[double,bend right=30] (l/sw) to (r/se);
    \end{tikzpicture}
  \end{equation}

  To check that $i\in\GlCls$, we fix a realignment problem whose
  extent lies over $\Mor|>->|[i]{C}{D}$:
  \begin{equation}\label[diagram]{diag:gl-ret-st:1}
    \begin{tikzpicture}[diagram,baseline=(sq/sw.base)]
      \SpliceDiagramSquare<sq/>{
        width = 3.5cm,
        nw = h,
        sw = C,
        ne = \pi,
        se = U,
        west/style = lies over,
        east/style = lies over,
        south/style = {color = {LightGray}}
      }
      \node (f) [between = sq/nw and sq/ne, yshift = -0.75cm] {$f$};
      \node (D) [between = sq/sw and sq/se, yshift = -0.75cm] {$D$};
      \draw[lies over] (f) to (D);
      \draw[>->] (sq/nw) to (f);
      \draw[>->] (sq/sw) to (D);

      \node (Arr/E) [right = of sq/ne] {$\ArrCat{\ECat}$};
      \node (E) [right = of sq/se] {$\ECat$};
      \draw[fibration] (Arr/E) to (E);
    \end{tikzpicture}
  \end{equation}

  We restrict \cref{diag:gl-ret-st:1} along \cref{diag:gl-ret-st:0}:
  \begin{equation}\label[diagram]{diag:gl-ret-st:2}
    \begin{tikzpicture}[diagram,baseline=(sq/sw.base)]
      \SpliceDiagramSquare<sq/>{
        width = 3.5cm,
        nw = h,
        sw = C,
        ne = \pi,
        se = U,
        west/style = {lies over, color = LightGray},
        east/style = lies over,
        south/style = {color = LightGray}
      }
      \node (f) [between = sq/nw and sq/ne, yshift = -0.75cm] {$f$};
      \node (D) [between = sq/sw and sq/se, yshift = -0.75cm] {$D$};
      \node (A) [left = 2.75cm of sq/sw] {$A$};
      \node (B) [left = 2.75cm of D] {$B$};
      \node (C') [left = 2.75cm of A] {$C$};
      \node (D') [left = 2.75cm of B] {$D$};

      \node (B*f) [left = 2.75cm of f] {$B^*f$};
      \node (A*h) [left = 2.75cm of h] {$A^*h$};
      \draw[>->] (A*h) to (B*f);

      \node (f') [left = 2.75cm of B*f] {$f$};
      \node (h') [left = 2.75cm of A*h] {$h$};
      \draw[>->] (h') to (f');

      \draw[->,color=LightGray] (A) to (sq/sw);
      \draw[->] (B) to (D);
      \draw[>->] (A) to (B);

      \draw[->,color=LightGray] (C') to (A);
      \draw[->] (D') to (B);
      \draw[>->] (C') to (D');

      \draw[lies over] (f) to (D);
      \draw[>->] (sq/nw) to (f);
      \draw[>->] (sq/sw) to (D);

      \draw[->] (h') to (A*h);
      \draw[->] (A*h) to (h);
      \draw[double,bend left=30] (h') to (h);

      \draw[lies over] (h') to (C');
      \draw[lies over] (f') to (D');
      \draw[lies over,color=LightGray] (A*h) to (A);
      \draw[lies over] (B*f) to (B);
      \draw[->] (f') to (B*f);
      \draw[->] (B*f) to (f);

      \node (Arr/E) [right = of sq/ne] {$\ArrCat{\ECat}$};
      \node (E) [right = of sq/se] {$\ECat$};
      \draw[fibration] (Arr/E) to (E);
    \end{tikzpicture}
  \end{equation}

  We first glue $\Mor{A^*h}{\pi}$ along $\Mor|>->|{A^*h}{B^*f}$ which lies over $j\in\GlCls$:
  \begin{equation}\label[diagram]{diag:gl-ret-st:3}
    \DiagramSquare{
      nw = A^*h,
      ne = h,
      se = \pi,
      sw = B^*f,
      west/style = >->,
      south/style = exists,
    }
  \end{equation}

  The dotted map of \cref{diag:gl-ret-st:3} restricts along the left-most
  square of $\Mor{f}{B^*f}$ to a solution to our original realignment problem
  (\cref{diag:gl-ret-st:1}):
  \[
    \begin{tikzpicture}[diagram,baseline=(l/sw.base)]
      \SpliceDiagramSquare<l/>{
        nw = h,
        sw = f,
        ne = A^*h,
        se = B^*f,
        west/style = >->,
      }
      \SpliceDiagramSquare<r/>{
        glue = west,
        glue target = l/,
        ne = h,
        se = \pi,
      }
      \draw[double,bend left=30] (l/nw) to (r/ne);
      \draw[->,exists,bend right=30] (l/sw) to (r/se);
    \end{tikzpicture}
    \qedhere
  \]
\end{proof}

\subsection{A small object argument}\label{sec:soa}

In this section we construct a candidate for the generic family
of $\Cls_\kappa$, using a variant of the small object argument. Our
construction is very similar to that of
\citet{shulman:2015:elegant,shulman:2019} but relies on different assumptions.

By \cref{lem:small-set-of-monos}, there is a $\kappa$-small set of monomorphisms
$\GenCls\subseteq\ECat\Sup{\rightarrowtail}$ generating all the monomorphisms of $\ECat$ under
pushout, transfinite composition, and retracts, and whose domains and codomains are $\kappa$-compact.

\begin{definition}\label[definition]{def:realignment-data}
  Let $\Mor[\pi]{E}{U}$ be a relatively $\kappa$-compact map.  A \emph{realignment
  datum} for $\pi$ is defined to be a relatively $\kappa$-compact map $f$
  together with a span of the following form in $\CartArrCat{\ECat}$, in which
  $\Mor|>->|{h}{f}$ lies horizontally over an element of $\GenCls$:
  \[
    \begin{tikzpicture}[diagram]
      \node (h) {$h$};
      \node (f) [left = of nw] {$f$};
      \node (pi) [right = of nw] {$\pi$};
      \path[>->] (h) edge node [sloped,below] {} (f);
      \path[->] (h) edge node [above] {} (pi);
    \end{tikzpicture}
  \]
\end{definition}

There is of course a proper class of realignment data in the sense of
\cref{def:realignment-data}, but \cref{lem:pres-sl-ess-small} ensures that up to
isomorphism there is only a set of realignment data.

\begin{notation}
  We will write $\GlData<\kappa>{\pi}$ for the chosen set of representatives of
  isomorphism classes of realignment data for $\pi$; for any
  $d\in\GlData<\kappa>{\pi}$, we will write
  $\GlSpan{f_d}{h_d}{\pi}$ for the span it represents.
\end{notation}

We record the following lemma for use in \cref{sec:hierarchy}.
\begin{lemma}
  \label[lemma]{lem:small-set-of-realignment-datums}
  Given a strongly inaccessible cardinal $\mu > \kappa$ and a relatively $\kappa$-compact morphism
  $\Mor[\pi]{E}{U}$ such that $U$ is $\mu$-compact, the set $\GlData<\kappa>{\pi}$ is $\mu$-small.
\end{lemma}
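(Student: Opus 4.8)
The plan is to bound the set $\GlData<\kappa>{\pi}$ of isomorphism classes of realignment data by fibering it over the $\kappa$-small set $\GenCls$ and counting, for each $\Mor|>->|[m]{X}{Y}\in\GenCls$, the data that lie over $m$. By \cref{lem:small-set-of-monos} the objects $X$ and $Y$ are $\kappa$-compact, hence also $\mu$-compact. Since the leftward leg $\Mor|>->|{h}{f}$ of a realignment datum is cartesian over $m$, it exhibits $h$ as the pullback $m^*f$; so, up to isomorphism, a datum over $m$ amounts to a relatively $\kappa$-compact map $f$ over $Y$, taken up to isomorphism over $Y$, together with a cartesian map $\Mor[\alpha]{m^*f}{\pi}$. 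I would thus reduce the lemma to showing that each of these two collections is $\mu$-small, after which the conclusion follows because $\GenCls$ is $\kappa$-small, $\kappa<\mu$, and $\mu$ — being regular and strongly inaccessible — is closed under the relevant products and $\kappa$-indexed sums of smaller cardinals.

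A preliminary observation I would record is that $\mu$, being strongly inaccessible with $\mu>\kappa$, is sharply larger than $\kappa$: for $\nu<\mu$ and $\rho<\kappa$ one has $\nu^\rho\le 2^{\max\prn{\nu,\rho}}<\mu$, and $\mu$ is regular. Since $\kappa$ is itself sharply larger than $\lambda_0$ and $\verts{\CCat}$ and $\SharplyGt$ is transitive, $\mu$ is sharply larger than $\lambda_0$ and $\verts{\CCat}$ as well, so \cref{cor:sharply-larger-facts} is available with $\mu$ in place of $\lambda$: $\mu$-compact objects are closed under finite limits and hom-sets between $\mu$-compact objects are $\mu$-small.

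For the relatively $\kappa$-compact maps over $Y$: by \cref{lem:pres-sl-ess-small} these are bounded up to isomorphism by $\Hom{Y}{\TY_\kappa}$, where $\Mor[\El_\kappa]{\EL_\kappa}{\TY_\kappa}$ is the generic map of $\Cls_\kappa$. I would take $\El_\kappa$ to be the sheafification $i^*\El$ of the Hofmann--Streicher generic family of \cref{sec:hs-and-s} associated to a Grothendieck universe $\VTY$ of size $\kappa$; since $\verts{\CCat}<\kappa$ and $\kappa$ is inaccessible, each component $\TY_C=\Psh[\VTY]{\Sl{\CCat}{C}}$ has cardinality at most $\kappa$, so $\TY$ is valued in $\mu$-small sets and is therefore $\mu$-compact in $\Psh{\CCat}$ by \cref{lem:small-iff-compact}; as $i^*$ preserves $\mu$-compact objects (\cref{lem:incl-preserves-compactness}), $\TY_\kappa$ is $\mu$-compact, and since $Y$ is $\mu$-compact too, $\Hom{Y}{\TY_\kappa}$ is $\mu$-small by \cref{cor:sharply-larger-facts}. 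For the cartesian maps $\Mor{m^*f}{\pi}$: fixing such an $f$ and writing $m^*f$ as $\Mor{P}{X}$, pulling $f$ back along $m$ (where it remains relatively $\kappa$-compact) and then along $\IdArr{X}$ shows $P$ is $\kappa$-compact, hence $\mu$-compact; a cartesian map $\Mor{m^*f}{\pi}$ is exactly its bottom component $\Mor{X}{U}$ together with an isomorphism over $X$ identifying $P$ with the pullback of $\pi$ along that component, so the number of such maps is at most $\verts{\Hom{X}{U}}$ — which is $\mu$-small because $X$ and $U$ are $\mu$-compact (\cref{cor:sharply-larger-facts}) — times $\verts{\Hom[\ECat]{P}{P}}$, which is $\kappa$-small because $P$ is $\kappa$-compact (\cref{cor:sharply-larger-facts}). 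Both bounds being $\mu$-small, the count is complete.

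I expect the main obstacle to be purely bookkeeping: keeping straight which of the objects in play ($Y$, $X$, $P$, $U$, $\TY_\kappa$) are $\kappa$- versus $\mu$-compact, and verifying that $\mu$ inherits enough ``sharpness'' to license \cref{cor:sharply-larger-facts}. The identification of $\El_\kappa$ with the sheafified Hofmann--Streicher family — used only to estimate $\verts{\TY_C}$ — is the one step where I would re-examine the earlier development; if one prefers to sidestep it, one can instead show directly that the isomorphism classes of $\kappa$-compact objects of $\ECat$ form a $\mu$-small set (each is a retract of the sheafification of a $\kappa$-small presheaf, and there are at most $2^\kappa<\mu$ of the latter) and then bound the relatively $\kappa$-compact maps over $Y$ by morphisms into $Y$ in the essentially small category of $\kappa$-compact objects, whose hom-sets are $\kappa$-small.
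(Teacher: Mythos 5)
Your proof is correct and takes the same basic approach as the paper's: fiber the set $\GlData<\kappa>{\pi}$ over the $\kappa$-small set $\GenCls$ and, for each generating monomorphism, bound the remaining data by hom-sets between $\mu$-compact objects via \cref{cor:sharply-larger-facts}. However, the paper's own proof is a two-sentence sketch that counts only $\GenCls$ and $\Hom{B}{U}$, and thereby leaves two things implicit that you make explicit — both worth making explicit. First, you correctly record the bookkeeping point that $\mu$, being strongly inaccessible and $>\kappa$, is sharply larger than $\kappa$ and hence (by transitivity of $\SharplyGt$) than $\lambda_0$ and $\verts{\CCat}$, which is needed before \cref{cor:sharply-larger-facts} can be invoked with $\mu$ in place of $\lambda$. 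Second, and more substantively, a realignment datum involves not just a generating mono $m\colon A\rightarrowtail B$ and a cartesian map $h\to\pi$ (bounded by $\Hom{A}{U}$), but also the choice of a relatively $\kappa$-compact $f$ over $B$ restricting to $h$; the paper's proof does not visibly account for this. You bound it via \cref{lem:pres-sl-ess-small} by $\Hom{B}{\TY_\kappa}$ and then argue $\TY_\kappa$ is $\mu$-compact by unwinding the sheafified Hofmann--Streicher description of the generic map. That argument is sound, though it depends on the chosen presentation of $\El_\kappa$; your alternative of directly bounding the isomorphism classes of $\kappa$-compact objects of $\ECat$ (retracts of sheafifications of $\kappa$-small presheaves, of which there are $\le 2^\kappa<\mu$) is cleaner and less tied to the earlier development, and is probably the route I would recommend writing up.

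One minor point of tidiness: when you count cartesian maps $\Mor{m^*f}{\pi}$ as $\verts{\Hom{X}{U}}\cdot\verts{\Hom[\ECat]{P}{P}}$, what you really want is a bound on isomorphisms $P\cong\chi^*E$; since $\chi^*E$ is $\kappa$-compact (as $X$ is $\kappa$-compact and $\pi$ is relatively $\kappa$-compact), $\Hom{P}{\chi^*E}$ is $\kappa$-small by \cref{cor:sharply-larger-facts} applied at level $\kappa$, which is the bound you want. Your stated bound $\Hom{P}{P}$ works because $P\cong\chi^*E$, but phrasing it via $\Hom{P}{\chi^*E}$ avoids a silent identification.
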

\begin{proof}
  Given $\Mor|>->|{A}{B} \in \GenCls$, there is a $\mu$-small set of morphisms $\Mor{B}{U}$ by
  \cref{cor:sharply-larger-facts}. As $\GenCls$ is $\kappa$-small, the conclusion then follows.
\end{proof}

\begin{construction}\label[construction]{con:soa}
  We will define an ideal diagram
  $\Mor[\pi_\kappa^\bullet]{\OrdToCat{\kappa}}{\CartArrCat{\ECat}}$ by well-founded
  induction, finally defining the family $\pi_\kappa:\CartArrCat{\ECat}$ to be
  $\Colim{\OrdToCat{\kappa}}{\pi_\kappa^\bullet}$:
  \[
    \begin{tikzpicture}[diagram]
      \SpliceDiagramSquare<1/>{
        nw/style = pullback,
        ne/style = pullback,
        nw = E_\kappa^0,
        sw = U_\kappa^0,
        west = \pi_\kappa^0,
        ne = \ldots,
        se = \ldots,
        north/style = >->,
        south/style = >->,
      }
      \SpliceDiagramSquare<2/>{
        ne/style = pullback,
        glue = west, glue target = 1/,
        ne = E_\kappa^\alpha,
        se = U_\kappa^\alpha,
        north/style = >->,
        south/style = >->,
      }
      \SpliceDiagramSquare<3/>{
        ne/style = pullback,
        glue = west, glue target = 2/,
        ne = \ldots,
        se = \ldots,
        north/style = >->,
        south/style = >->,
      }
      \SpliceDiagramSquare<4/>{
        glue = west, glue target = 3/,
        ne = E_\kappa,
        se = U_\kappa,
        east = \pi_\kappa,
        north/style = >->,
        south/style = >->,
      }
    \end{tikzpicture}
  \]

  We initialize the iteration by setting $\pi_\kappa^0 \coloneqq
  \InitObj{\CartArrCat{\ECat}}$.  In the successor case, we assume $\pi_\kappa^\alpha \in
  \CartArrCat{\ECat}$ and define $\pi_\kappa^{\alpha+1}$ to be the following pushout
  computed in $\CartArrCat{\ECat}$ using
  \cref{lem:colim-in-cart-arr}.
  \begin{equation}\label[diagram]{diag:soa-succ}
    \DiagramSquare{
      width = 3cm,
      se/style = pushout,
      west/style = >->,
      east/style = {exists,>->},
      south/style = {->,exists},
      nw = \Coprod{d\in\GlData<\kappa>{\pi_\kappa^\alpha}}h_d,
      ne = \pi_\kappa^\alpha,
      sw = \Coprod{d\in\GlData<\kappa>{\pi_\kappa^\alpha}}f_d,
      se = \pi_\kappa^{\alpha+1}
    }
  \end{equation}

  At a limit ordinal $\alpha$, fix an ideal diagram
  $\Mor[\pi_\kappa^\bullet]{\OrdToCat{\alpha}}{\CartArrCat{\ECat}}$ and define
  $\pi_\kappa^\alpha\coloneqq\Colim{\OrdToCat{\alpha}}\pi_\kappa^\bullet$.
\end{construction}

\begin{lemma}\label[lemma]{lem:soa:stages-rel-pres}
  The ideal diagram $\Mor[\pi_\kappa^\bullet]{\OrdToCat{\kappa}}{\CartArrCat{\ECat}}$ from \cref{con:soa} is valued in
  relatively $\kappa$-compact morphisms.
\end{lemma}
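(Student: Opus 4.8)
The plan is to prove by transfinite induction on $\alpha < \kappa$ that each stage $\pi_\kappa^\alpha$ is relatively $\kappa$-compact, treating the three cases of \cref{con:soa} separately and, in each, reducing the claim to \cref{lem:colim-rel-pres} by exhibiting the descent property enjoyed by the base of the relevant colimit diagram. Throughout I will use freely that, by \cref{thm:compact-universe}, the class $\Cls_\kappa$ of relatively $\kappa$-compact maps satisfies \textbf{(U1--7)}.

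For the base case, $\pi_\kappa^0 = \InitObj{\CartArrCat{\ECat}}$ is the identity morphism on the strict initial object $\InitObj{\ECat}$ (strictness makes every square out of it cartesian, so $\id_{\InitObj{\ECat}}$ is indeed initial in $\CartArrCat{\ECat}$); being a monomorphism, it lies in $\Cls_\kappa$ by \textbf{(U2)}. For the successor case, assume $\pi_\kappa^\alpha$ is relatively $\kappa$-compact. By \cref{def:realignment-data} each $f_d$, for $d \in \GlData<\kappa>{\pi_\kappa^\alpha}$, is relatively $\kappa$-compact; and each $h_d$, being the pullback of $f_d$ along the monomorphism of $\GenCls$ over which $\Mor|>->|{h_d}{f_d}$ lies, is relatively $\kappa$-compact by pullback-stability \textbf{(U1)}. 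Since coproducts enjoy descent, \cref{lem:colim-rel-pres} applied to the discrete diagram on $\GlData<\kappa>{\pi_\kappa^\alpha}$ shows $\Coprod{d\in\GlData<\kappa>{\pi_\kappa^\alpha}}h_d$ and $\Coprod{d\in\GlData<\kappa>{\pi_\kappa^\alpha}}f_d$ are relatively $\kappa$-compact. Now $\pi_\kappa^{\alpha+1}$ is the pushout in $\CartArrCat{\ECat}$ of the span of \cref{diag:soa-succ}, whose three vertices are now all relatively $\kappa$-compact; its base is a pushout along the coproduct, over $d$, of the generating monomorphisms of $\GenCls$ — which is again a monomorphism, since a coproduct of monomorphisms in a topos is a monomorphism — so this base diagram enjoys descent by adhesivity. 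Hence \cref{lem:colim-rel-pres} gives $\pi_\kappa^{\alpha+1} \in \Cls_\kappa$.

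For the limit case, let $\alpha < \kappa$ be a limit ordinal; the poset $\OrdToCat{\alpha}$ is filtered, so the base of the ideal diagram $\Mor[\pi_\kappa^\bullet]{\OrdToCat{\alpha}}{\CartArrCat{\ECat}}$ enjoys descent by \cref{lem:filtered-colimit-descent}. As each $\pi_\kappa^\beta$ with $\beta < \alpha$ is relatively $\kappa$-compact by the inductive hypothesis, \cref{lem:colim-rel-pres} shows that $\pi_\kappa^\alpha = \Colim{\OrdToCat{\alpha}}{\pi_\kappa^\bullet}$ is relatively $\kappa$-compact, completing the induction.

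There is no essential obstacle once \cref{lem:colim-rel-pres} and the three descent facts — for coproducts, for pushouts along monomorphisms, and for filtered colimits — are in hand; the only points requiring care are that \cref{def:realignment-data} supplies relative $\kappa$-compactness of $f_d$ but not directly of $h_d$ (so pullback-stability must be invoked to handle $h_d$ and the coproduct $\Coprod{d}{h_d}$), and that one must record the elementary observation that the coproduct of the generating monomorphisms in $\GenCls$ is again a monomorphism, so that the successor-stage base is genuinely a pushout along a mono and adhesivity applies.
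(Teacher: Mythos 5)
Your proof is correct and follows essentially the same route as the paper: transfinite induction on $\alpha$, reducing each step to \cref{lem:colim-rel-pres} via the descent properties of coproducts, pushouts along monomorphisms, and filtered colimits. The only cosmetic differences are that for the base case you invoke \textbf{(U2)} rather than applying \cref{lem:colim-rel-pres} to the empty diagram, and that you are slightly more precise about the source of relative $\kappa$-compactness of $f_d$ (directly from \cref{def:realignment-data}) versus $h_d$ (by pullback-stability), where the paper somewhat loosely calls both ``pullbacks of $\pi_\kappa^\alpha$.''
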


\begin{proof}
  We proceed by induction on ordinals $\alpha\leq \kappa$. The base case
  $\pi_\kappa^0 = \InitObj{\CartArrCat{\ECat}}$ is relatively $\kappa$-compact by
  \cref{lem:colim-rel-pres}.
  Next we check that $\pi_\kappa^{\alpha+1}$ is relatively $\kappa$-compact
  assuming $\pi_\kappa^\alpha$ is relatively $\kappa$-compact.  We may
  apply \cref{lem:colim-rel-pres} because \cref{diag:soa-succ} enjoys descent
  as a pushout along a monomorphism, so it suffices to check that each node of
  \cref{diag:soa-succ} is relatively $\kappa$-compact. We have already
  assumed that $\pi_\kappa^\alpha$ is relatively $\kappa$-compact; both
  $\Coprod{d\in\GlData<\kappa>{\pi_\kappa^\alpha}}{f_d}$ and
  $\Coprod{d\in\GlData<\kappa>{\pi_\kappa^\alpha}}{h_d}$ are relatively
  $\kappa$-compact again by \cref{lem:colim-rel-pres} because coproducts
  enjoy descent and both $f_d,h_d$ are relatively $\kappa$-compact as
  pullbacks of $\pi_\kappa^\alpha$.
  In the limit case we assume
  $\pi_\kappa^\beta$ relatively $\kappa$-compact for each $\beta<\alpha$,
  and observe that $\Colim{\OrdToCat{\alpha}}{\pi_\kappa^\bullet}$ is
  relatively $\kappa$-compact by \cref{lem:colim-rel-pres} again, since
  $\OrdToCat{\alpha}$ is a filtered preorder and therefore its diagrams
  enjoy descent (\cref{lem:filtered-colimit-descent}).
\end{proof}

\begin{lemma}
  The transfinite composition $\pi_\kappa\coloneqq \Colim{\OrdToCat{\kappa}}{\pi_\kappa^\bullet}$ is relatively $\kappa$-compact.
\end{lemma}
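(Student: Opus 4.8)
The plan is to read this off from \cref{lem:colim-rel-pres}; indeed, the statement is precisely the $\alpha = \kappa$ instance of the limit case already handled in \cref{lem:soa:stages-rel-pres}, since $\kappa$ is a limit ordinal and $\pi_\kappa$ is by definition $\Colim{\OrdToCat{\kappa}}{\pi_\kappa^\bullet}$.

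Spelling this out, we would argue as follows. The indexing category $\OrdToCat{\kappa}$ is a filtered poset, so by \cref{lem:filtered-colimit-descent} each of its diagrams enjoys descent; in particular the base $\Mor{\OrdToCat{\kappa}}{\ECat}$ of the ideal diagram $\pi_\kappa^\bullet$, namely the diagram $\alpha \mapsto U_\kappa^\alpha$ of codomains, satisfies \cref{def:descent}. By \cref{con:soa} the diagram $\pi_\kappa^\bullet$ is valued in $\CartArrCat{\ECat}$, and by \cref{lem:soa:stages-rel-pres} each $\pi_\kappa^\alpha$ is relatively $\kappa$-compact. All the hypotheses of \cref{lem:colim-rel-pres} are then in place, and we conclude that the colimit $\pi_\kappa = \Colim{\OrdToCat{\kappa}}{\pi_\kappa^\bullet}$, formed in $\CartArrCat{\ECat}$, is relatively $\kappa$-compact.

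There is essentially no obstacle here: the proof is just an assembly of \cref{lem:filtered-colimit-descent,lem:soa:stages-rel-pres,lem:colim-rel-pres}. The only point that even warrants mention is that $\OrdToCat{\kappa}$ is a filtered poset — which is immediate since $\kappa$, being strongly inaccessible, is a limit ordinal — so that descent applies and relative $\kappa$-compactness is preserved along this transfinite colimit of cartesian squares.
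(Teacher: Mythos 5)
Your proof is correct and coincides with the paper's: both simply invoke \cref{lem:colim-rel-pres} together with \cref{lem:soa:stages-rel-pres}, using the descent of filtered colimits from \cref{lem:filtered-colimit-descent}. You have merely spelled out the same one-line argument in more detail.
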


\begin{proof}
  By \cref{lem:colim-rel-pres,lem:soa:stages-rel-pres} using the fact that
  transfinite compositions enjoy descent (\cref{lem:filtered-colimit-descent}).
\end{proof}

\subsection{Realignment for the universe}

In \cref{sec:soa} we have constructed a relatively $\kappa$-compact map
$\Mor[\pi_\kappa]{E_\kappa}{U_\kappa}$ using the small object argument. We wish
to show that this map exhibits $\Cls_\kappa$ as a universe satisfying
\textbf{(U5,8)}, \ie $\pi_\kappa$ is generic for relatively
$\kappa$-compact maps and satisfies the realignment condition. Because
realignment is stronger than genericity
(\cref{lem:genericity-from-realignment}), we will focus on the former.

We recall from \cref{nota:gl-cls} that $\GlCls{\pi_\kappa}$ denotes the largest
class of monomorphisms relative to which $\prn{\Cls,\pi_\kappa}$ supports
realignment. From \cref{lem:small-set-of-monos} we recall that $\GenCls$ is a set
of monomorphisms generating $\ECat$ under pushout, transfinite composition, and
retracts, and we have assumed that the domain of any $m\in\GenCls$ is
$\kappa$-compact.

\begin{lemma}\label[lemma]{lem:gen-monos-realignable}
  Every generating monomorphism is realignable, \ie we have
  $\GenCls\subseteq\GlCls$.
\end{lemma}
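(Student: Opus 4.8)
The plan is to show that a realignment problem lying over a generating monomorphism $\Mor|>->|[m]{A}{B}\in\GenCls$ is \emph{already} solved by the small object argument of \cref{con:soa}: such a problem is a realignment datum for one of the stages $\pi_\kappa^\alpha$, and the successor pushout \cref{diag:soa-succ} is designed precisely to extend those. Concretely, I fix $m\in\GenCls$ together with a realignment problem over it — a relatively $\kappa$-compact $f\in\Cls_\kappa$ with codomain $B$ and a cartesian map $\Mor{h}{\pi_\kappa}$, where $h\coloneqq m^*f$ sits cartesianly inside $f$ over $m$ — and must extend $\Mor{h}{\pi_\kappa}$ along $\Mor|>->|{h}{f}$ to a cartesian map $\Mor{f}{\pi_\kappa}$.

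The first step is to push the target of $\Mor{h}{\pi_\kappa}$ down to a finite stage. Since $\pi_\kappa=\Colim{\OrdToCat{\kappa}}{\pi_\kappa^\bullet}$ is the colimit of an ideal diagram over the $\kappa$-filtered poset $\OrdToCat{\kappa}$, and filtered diagrams enjoy descent (\cref{lem:filtered-colimit-descent}, via \cref{lem:colim-in-cart-arr}), each inclusion $\Mor{\pi_\kappa^\alpha}{\pi_\kappa}$ is cartesian and $U_\kappa$ is the $\kappa$-filtered colimit of the bases $U_\kappa^\alpha$. As the domain $A$ of $m$ is $\kappa$-compact (\cref{lem:small-set-of-monos}), the base map $\Mor{A}{U_\kappa}$ of our cartesian square factors through some $\Mor{A}{U_\kappa^\alpha}$; pulling $\pi_\kappa^\alpha$ back along this factorization reproduces $h$ (up to canonical isomorphism, using that $\Mor{\pi_\kappa^\alpha}{\pi_\kappa}$ is cartesian), so $\Mor{h}{\pi_\kappa}$ factors as a cartesian map $\Mor{h}{\pi_\kappa^\alpha}$ followed by $\Mor{\pi_\kappa^\alpha}{\pi_\kappa}$.

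Now the span $\GlSpan{f}{h}{\pi_\kappa^\alpha}$ is a realignment datum for $\pi_\kappa^\alpha$ in the sense of \cref{def:realignment-data}: $f$ is relatively $\kappa$-compact and $\Mor|>->|{h}{f}$ lies horizontally over $m\in\GenCls$. By \cref{lem:soa:stages-rel-pres} and \cref{lem:pres-sl-ess-small} this datum is isomorphic to some $d\in\GlData<\kappa>{\pi_\kappa^\alpha}$, so after transporting along that isomorphism we may take $f=f_d$ and $h=h_d$. Restricting the pushout square \cref{diag:soa-succ} to its $d$-component, the induced map $\Mor{f_d}{\pi_\kappa^{\alpha+1}}$ is cartesian (the corners of a pushout in $\CartArrCat{\ECat}$ map cartesianly to it, and coproduct inclusions are cartesian by disjointness) and restricts along $\Mor|>->|{h_d}{f_d}$ to the composite of $\Mor{h_d}{\pi_\kappa^\alpha}$ with $\Mor{\pi_\kappa^\alpha}{\pi_\kappa^{\alpha+1}}$. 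Post-composing with $\Mor{\pi_\kappa^{\alpha+1}}{\pi_\kappa}$ and transporting back along the isomorphism yields a cartesian map $\Mor{f}{\pi_\kappa}$ restricting along $\Mor|>->|{h}{f}$ to the original $\Mor{h}{\pi_\kappa}$, which is the desired solution; hence $m\in\GlCls$.

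The main obstacle — really the only step that is not pure bookkeeping — is the reduction in the second paragraph: upgrading the factorization of the \emph{base} map $\Mor{A}{U_\kappa}$ through a stage $U_\kappa^\alpha$ to a factorization of the entire cartesian square through $\pi_\kappa^\alpha$, and checking that the square one recovers is the one we started with. This is exactly where the descent property of the ideal diagram $\pi_\kappa^\bullet$ (equivalently, the cartesianness of each $\Mor{\pi_\kappa^\alpha}{\pi_\kappa}$) does the work. Everything afterwards is diagram chasing inside \cref{con:soa}: that our datum is hit on the nose by the chosen representative $d$, that the relevant leg of the pushout \cref{diag:soa-succ} is cartesian, and that restriction along $m$ commutes with the composites involved.
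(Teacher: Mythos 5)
Your proposal is correct and follows essentially the same route as the paper's proof: use $\kappa$-compactness of the domain of the generating monomorphism to factor the cartesian map $\Mor{h}{\pi_\kappa}$ through a stage $\pi_\kappa^\alpha$, recognize the resulting span as a realignment datum in $\GlData<\kappa>{\pi_\kappa^\alpha}$, and read off the solution from the successor pushout \cref{diag:soa-succ}. Your second paragraph merely spells out (via cartesianness of the stage inclusions) a step the paper leaves implicit, which is a reasonable elaboration rather than a deviation.
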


\begin{proof}
  Let $\Mor|>->|[i]{A}{B}$ be an element of $\GenCls$; to check that
  $i\in\GlCls{\pi_\kappa}$, we fix a realignment problem in $\Cls_\kappa$ whose extent lies over
  $\Mor|>->|[i]{A}{B}$.
  \begin{equation}\label[diagram]{diag:gen-monos-realignable:0}
    \begin{tikzpicture}[diagram,baseline=(sq/sw.base)]
      \SpliceDiagramSquare<sq/>{
        width = 3.75cm,
        nw = h,
        sw = A,
        ne = \pi_\kappa,
        se = U_\kappa,
        west/style = lies over,
        east/style = lies over,
        south/style = {color = {LightGray}}
      }
      \node (f) [between = sq/nw and sq/ne, yshift = -1cm] {$f$};
      \node (B) [between = sq/sw and sq/se, yshift = -1cm] {$B$};
      \path[lies over] (f) edge (B);
      \path[>->] (sq/nw) edge (f);
      \path[>->] (sq/sw) edge (B);

      \node (Arr/E) [right = of sq/ne] {$\ArrCat{\ECat}$};
      \node (E) [right = of sq/se] {$\ECat$};
      \path[fibration] (Arr/E) edge (E);
    \end{tikzpicture}
  \end{equation}

  Because $\Mor|>->|{A}{B}\in\GenCls$, we know that $A$ is
  $\kappa$-compact; this is the same as to say that $\Hom[\ECat]{A}{-}$
  commutes with $\kappa$-filtered colimits, in particular the colimit
  $U_\kappa = \Colim{\OrdToCat{\kappa}}{U_\kappa^\bullet}$.
  Thus, using the construction of colimits in the category of sets, there
  exists some $\alpha$ such that $\Mor{h}{\pi_\kappa}$ factors
  through $\Mor|>->|{\pi_\kappa^\alpha}{\pi_\kappa}$; the successor case of the
  small object argument adjoins realignments along generating monomorphisms, so it is
  appropriate to factor our realignment problem like so:
  \begin{equation}\label[diagram]{diag:gen-monos-realignable:1}
    \begin{tikzpicture}[diagram,baseline=(f.base)]
      \node (h) {$h$};
      \node (f) [below = of h] {$f$};
      \node (pi/k/a) [right = of h] {$\pi_\kappa^\alpha$};
      \node (pi/k/a+1) [right = of pi/k/a] {$\pi_\kappa^{\alpha+1}$};
      \node (pi/k) [right = of pi/k/a+1] {$\pi_\kappa$};
      \draw[>->] (h) to (f);
      \draw[->,color=RedDevil] (h) to (pi/k/a);
      \draw[>->] (pi/k/a) to (pi/k/a+1);
      \draw[>->] (pi/k/a+1) to (pi/k);
    \end{tikzpicture}
  \end{equation}

  The intermediate realignment span $\GlSpan{f}{h}{\pi_\kappa^\alpha}$ can be
  represented by a realignment datum $d\in\GlData<\kappa>{\pi_\kappa^\alpha}$. We may
  therefore compose the induced injections to obtain a solution
  $\Mor{f}{\pi_\kappa}$ to the realignment problem \cref{diag:gen-monos-realignable:0}.
  \[
    \begin{tikzpicture}[diagram,baseline=(l/sw.base)]
      \SpliceDiagramSquare<l/>{
        nw = h,
        sw = f,
        ne = \Coprod{d\in\GlData<\kappa>{\pi_\kappa^\alpha}}h_d,
        se = \Coprod{d\in\GlData<\kappa>{\pi_\kappa^\alpha}}f_d,
        west/style = >->,
        east/style = >->,
        north/style = >->,
        south/style = {>->,color = RegalBlue},
        width = 2.5cm,
      }
      \SpliceDiagramSquare<r/>{
        glue = west,
        glue target = l/,
        width = 2.5cm,
        east/style = >->,
        ne = \pi_\kappa^\alpha,
        se = \pi_\kappa^{\alpha+1},
        se/style = pushout,
        south/style = {->,color = RegalBlue},
      }
      \draw[->,color=RedDevil,bend left=30] (l/nw) to (r/ne);
      \node (pi/k) [right = of r/ne] {$\pi_\kappa$};
      \draw[>->] (r/ne) to (pi/k);
      \draw[>->,color=RegalBlue] (r/se) to (pi/k);
    \end{tikzpicture}
    \qedhere
  \]
\end{proof}

\begin{corollary}
  All monomorphisms are realignable, \ie we have $\GlCls{\pi_\kappa} = \ECat\Sup{\rightarrowtail}$.
\end{corollary}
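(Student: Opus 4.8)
The plan is to combine the four preceding results into a one-line saturation argument. First I would recall that, by the three lemmas of \cref{sec:saturation}, the class $\GlCls{\pi_\kappa}$ of realignable monomorphisms is closed under pushout, transfinite composition, and retracts. Next I would invoke \cref{lem:gen-monos-realignable}, which gives the base case $\GenCls \subseteq \GlCls{\pi_\kappa}$, together with \cref{lem:small-set-of-monos}, which states that $\GenCls$ generates all monomorphisms of $\ECat$ under exactly those three operations.

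Then the conclusion is immediate: $\GlCls{\pi_\kappa}$ is a class of monomorphisms that contains the generating set $\GenCls$ and is closed under the operations by which $\GenCls$ generates every monomorphism in $\ECat$, so every monomorphism lies in $\GlCls{\pi_\kappa}$. Since $\GlCls{\pi_\kappa} \subseteq \ECat\Sup{\rightarrowtail}$ by definition, we obtain the equality $\GlCls{\pi_\kappa} = \ECat\Sup{\rightarrowtail}$.

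There is no real obstacle here; the corollary is a purely formal consequence, and the substantive work was already done in establishing closure of $\GlCls$ (\cref{sec:saturation}), the generation statement (\cref{lem:small-set-of-monos}), and the base case (\cref{lem:gen-monos-realignable}). The only thing worth being careful about is that the generating operations for which \cref{lem:small-set-of-monos} is stated — pushout, transfinite composition, retracts — match exactly the closure properties proved for $\GlCls$; this alignment is what makes the saturation argument go through verbatim. Combined with \cref{lem:genericity-from-realignment}, this also shows $\prn{\Cls_\kappa,\pi_\kappa}$ satisfies \textbf{(U5)}, so $\pi_\kappa$ is a generic map for $\Cls_\kappa$ enjoying realignment \textbf{(U8)} against all monomorphisms.
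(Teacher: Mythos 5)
Your proposal matches the paper's proof exactly: both combine the saturation of $\GlCls{\pi_\kappa}$ under pushout, transfinite composition, and retracts (\cref{sec:saturation}) with the fact that $\GenCls$ generates all monomorphisms under those operations (\cref{lem:small-set-of-monos}) and the base case $\GenCls\subseteq\GlCls{\pi_\kappa}$ (\cref{lem:gen-monos-realignable}). The argument is correct and complete as stated.
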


\begin{proof}
  We have assumed that $\GenCls$ generates $\ECat\Sup{\rightarrowtail}$ under
  pushout, transfinite composition, and retracts; but $\GlCls{\pi_\kappa}$ is
  saturated (\cref{sec:saturation}), so our result follows from the fact that
  generating monomorphisms are realignable (\cref{lem:gen-monos-realignable}).
\end{proof}

\begin{corollary}\label[corollary]{cor:rel-cpt-universe-all-axioms}
  $\Cls_\kappa$ is a universe satisfying \textbf{(U1--8)}.
\end{corollary}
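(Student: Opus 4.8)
The plan is to assemble results already in hand; this corollary introduces nothing genuinely new. I would take the relatively $\kappa$-compact map $\Mor[\pi_\kappa]{E_\kappa}{U_\kappa}$ produced by \cref{con:soa} as the generic morphism for $\Cls_\kappa$, and then check the eight axioms against this choice.

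For \textbf{(U1--4,6,7)} I would simply cite \cref{thm:compact-universe} (with \textbf{(U7)} also isolated in \cref{lem:cls-kappa-satisfies-descent}): these axioms constrain only the \emph{class} $\Cls_\kappa$ of relatively $\kappa$-compact maps and are insensitive to which generic morphism one chooses, and the standing hypotheses on $\kappa$ fixed at the start of \cref{sec:universe} are exactly those of that theorem.

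For \textbf{(U8)} I would invoke the preceding corollary, which states $\GlCls{\pi_\kappa} = \ECat\Sup{\rightarrowtail}$; unwinding \cref{nota:gl-cls}, this says precisely that every monomorphism of $\ECat$ is realignable with respect to $\prn{\Cls_\kappa,\pi_\kappa}$, which is \textbf{(U8)} for $\mathcal{M}$ the class of all monomorphisms. For \textbf{(U5)} I would then appeal to \cref{lem:genericity-from-realignment}: the map $\pi_\kappa$ lies in $\Cls_\kappa$ since it is relatively $\kappa$-compact, the class $\Cls_\kappa$ is pullback-stable by \textbf{(U1)}, and $\pi_\kappa$ has realignment along all monomorphisms, so that lemma delivers genericity of $\pi_\kappa$. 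Together these give \textbf{(U1--8)} for $\prn{\Cls_\kappa,\pi_\kappa}$.

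I do not expect any real obstacle at this final step: the substance of the argument lies entirely upstream — principally in \cref{lem:gen-monos-realignable} and the small object argument of \cref{con:soa}, whose delicate point is arranging that every stage stays relatively $\kappa$-compact (via the descent-closure lemmas of \cref{sec:descent}) while absorbing all realignment data along generating monomorphisms, together with the saturation lemmas of \cref{sec:saturation} that promote realignability from generating monomorphisms to all monomorphisms. The only thing to be careful about here is that one has genuinely replaced the tautological generic morphism furnished by \cref{thm:compact-universe} with $\pi_\kappa$ throughout, which costs nothing precisely because \textbf{(U1--4,6,7)} never mention the generic morphism at all.
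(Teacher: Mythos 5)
Your proposal is correct and is exactly the argument the paper intends (the paper states this corollary without proof, having set it up in precisely this way): \textbf{(U1--7)} from \cref{thm:compact-universe}, \textbf{(U8)} from the preceding corollary that $\GlCls{\pi_\kappa}$ is all monomorphisms, and \textbf{(U5)} for $\pi_\kappa$ via \cref{lem:genericity-from-realignment}. Your remark that the non-\textbf{(U5)} axioms are insensitive to the choice of generic morphism, so replacing the tautological generic map with $\pi_\kappa$ costs nothing, is the right observation and matches the paper's framing.
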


\subsection{A cumulative universe hierarchy}
\label{sec:hierarchy}

Fix a second strongly inaccessible cardinal $\mu > \kappa$. We obtain a generic map $\pi_\mu$ for
$\Cls_\mu$ satisfying \textbf{(U8)} by the same small object argument detailed in \cref{sec:soa}.

Genericity of $\pi_\mu$ implies that we automatically obtain a cartesian morphism
$\Mor{\pi_\kappa}{\pi_\mu}$ but this map is not generally a monomorphism. On the other hand, we can
choose our own cartesian monomorphism $\Mor|>->|{\pi_\kappa}{\pi_\mu}$ by means of a pointwise
construction.

\begin{lemma}
  There exists a cartesian monomorphism $\Mor|>->|{\pi_\kappa}{\pi_\mu}$.
\end{lemma}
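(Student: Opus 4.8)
The plan is to construct $\pi_\mu$ and the cartesian monomorphism $\pi_\kappa\rightarrowtail\pi_\mu$ in one stroke, by re-running the small object argument of \cref{con:soa} for the class $\Cls_\mu$ with $\pi_\kappa$ in place of $\InitObj{\CartArrCat{\ECat}}$ as the base case of the transfinite iteration. This detour seems forced: a cartesian map $\Mor{\pi_\kappa}{\pi_\mu}$ extracted from genericity need not be monic, and---as one sees by unwinding \cref{rem:unfolded-realignment}---neither need the cartesian extensions furnished by \textbf{(U8)} be monic; so rather than map $\pi_\kappa$ into a fixed $\pi_\mu$, we let $\pi_\mu$ be built \emph{around} $\pi_\kappa$, where the monomorphisms we need are supplied for free by the pushouts of generating monomorphisms that drive the construction.

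First I would check that the iteration makes sense with this altered base case. The map $\pi_\kappa$ is not only relatively $\kappa$-compact but relatively $\mu$-compact: in the presheaf presentation $\ECat=\Sh{\CCat,J}$ the fibres of $i_*\pi_\kappa$ over representables are $\kappa$-compact, hence $\mu$-compact, so $i_*\pi_\kappa$ is relatively $\mu$-compact in $\Psh{\CCat}$ by \cref{lem:rel-compact-iff-repr-compact}, and $\pi_\kappa$ is relatively $\mu$-compact in $\ECat$ because $i_*$ reflects relative $\mu$-compactness (\cref{lem:dir-image-pres-and-reflects}). Moreover its base $U_\kappa$ is $\mu$-compact---precisely the hypothesis isolated in \cref{lem:small-set-of-realignment-datums}---hence $\nu$-compact for the next strongly inaccessible $\nu>\mu$, so that \cref{lem:small-set-of-realignment-datums} bounds each $\GlData<\mu>{\pi_\mu^\alpha}$ and the construction stays set-sized. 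Relative $\mu$-compactness of every stage then follows exactly as in \cref{lem:soa:stages-rel-pres}, the base case now being given by hypothesis rather than by \cref{lem:colim-rel-pres}.

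Next I would note that $\pi_\mu\coloneqq\Colim{\OrdToCat{\mu}}{\pi_\mu^\bullet}$ is still generic for $\Cls_\mu$ and still satisfies \textbf{(U8)}: the saturation lemmas of \cref{sec:saturation}, together with \cref{lem:gen-monos-realignable} and the ensuing identity $\GlCls{\pi_\mu}=\ECat\Sup{\rightarrowtail}$, make no use of the base case beyond the fact that every realignment datum is eventually adjoined, and that is unchanged.

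It remains to see that the canonical comparison is a cartesian monomorphism. Each structure map $\Mor{\pi_\mu^\alpha}{\pi_\mu^{\alpha+1}}$ is cartesian by \cref{lem:colim-in-cart-arr}; computing the pushout of \cref{diag:soa-succ} componentwise in $\ArrCat{\ECat}$, each of its two components is a pushout of a coproduct of (the components of) the cartesian monomorphisms $\Mor|>->|{h_d}{f_d}$, hence a monomorphism in $\ECat$, monomorphisms being stable under coproduct and pushout. Thus the domain and codomain diagrams underlying $\pi_\mu^\bullet$ are ideal diagrams in $\ECat$, and \cref{lem:filtered-colim-inj-mono} shows that the transfinite composite $\Mor{\pi_\kappa=\pi_\mu^0}{\Colim{\OrdToCat{\mu}}{\pi_\mu^\bullet}=\pi_\mu}$ has monic components; it is cartesian by \cref{lem:colim-in-cart-arr}, and so it is the required cartesian monomorphism. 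The one point of genuine friction I anticipate is the size bookkeeping for the reindexed iteration, which the $\mu$-compactness of $U_\kappa$ settles; everything else is a reprise of \cref{sec:soa,sec:saturation}.
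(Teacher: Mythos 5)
Your approach is genuinely different from the paper's. The paper keeps the $\pi_\mu$ already fixed in the preamble (the $\mu$-indexed small object argument with base case $\InitObj{\CartArrCat{\ECat}}$) and constructs the monomorphism by comparing the two transfinite iterations \emph{stage by stage}: a cartesian monomorphism $\Mor|>->|{\pi_\kappa^\alpha}{\pi_\mu^\alpha}$ is built by induction, the successor step furnished by the inclusion of realignment data for $\pi_\kappa^\alpha$ into that for $\pi_\mu^\alpha$ via postcomposition, and then \cref{lem:good-colim-preserves-monos} is applied to the resulting cartesian mono of diagrams. You instead discard the paper's $\pi_\mu$, re-run the $\mu$-indexed small object argument with $\pi_\kappa$ as the base case, and observe that the resulting comparison map from the initial stage is a mono for free because the iteration forms an ideal diagram (\cref{lem:filtered-colim-inj-mono}). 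Your argument is correct in outline, and the justification that the saturation lemmas and \cref{lem:gen-monos-realignable} are insensitive to the base case is exactly right, as is the observation that the successor maps are monic because monos are stable under coproduct and pushout. Your approach is arguably cleaner for an inductive construction of the whole hierarchy: each level is built directly on top of the previous one, so the monos are never in question. What it gives up is that it proves the lemma only for a \emph{redefined} $\pi_\mu$; this is harmless for \cref{cor:hierarchy}, which only asserts the existence of suitable generic maps, but it does quietly substitute a different object for the one the text already nominated. (Your opening remark that the paper's approach ``seems forced'' because genericity and \textbf{(U8)} don't yield monos misdiagnoses the paper: the paper's mono does not come from genericity or realignment at all, but from tracking stage-by-stage inclusions, which is no more and no less direct than your re-indexing.)

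One concrete muddle in the size-bookkeeping paragraph: you do not need $U_\kappa$ to be $\mu$-compact for your construction to go through. The small object argument only requires $\pi_\kappa\in\Cls_\mu$ (which you correctly verify) and that $\GlData<\mu>{\pi_\mu^\alpha}$ be a \emph{set} at each stage; that is supplied directly by \cref{lem:pres-sl-ess-small}, not by \cref{lem:small-set-of-realignment-datums}. Citing the latter is doubly off: first because you invoke a lemma whose \emph{hypothesis}---not conclusion---is the $\mu$-compactness of the base, so it cannot be used to establish that fact; and second because the $\mu$-compactness of $U_\kappa$ is the content of the lemma that \emph{follows} the one under discussion in the paper (its proof is independent of the current lemma, so there is no circularity, but your proposal should either reprove it or defer to it explicitly rather than gesture at \cref{lem:small-set-of-realignment-datums}). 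Cutting this digression would leave a cleaner proof: relative $\mu$-compactness of $\pi_\kappa$, well-definedness of the re-indexed iteration via \cref{lem:pres-sl-ess-small}, validity of \textbf{(U5,U8)} as you argue, and the ideal-diagram argument for monicity of the comparison map.
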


\begin{proof}
  We recall that each $\pi_\lambda$ is
  $\Colim{\OrdToCat{\kappa}}{\pi_\lambda^\bullet}$.
  Because filtered colimits enjoy descent, by \cref{lem:good-colim-preserves-monos} to construct a cartesian monomorphism
  $\Mor|>->|{\Colim{\OrdToCat{\kappa}}{\pi_\kappa^\bullet}}{\Colim{\OrdToCat{\kappa}}{\pi_\mu^\bullet}}$,
  it suffices to define a cartesian monomorphism of diagrams
  $\Mor|>->,color=RegalBlue|[\ell]{\pi_\kappa^\bullet}{\pi_\mu^\bullet}$:
  \begin{equation}
    \begin{tikzpicture}[diagram]
      \node (pi/k/*) {$\pi_\kappa^\bullet$};
      \node (pi/m/*) [above right = 2.5cm of pi/k/*] {$\pi_\mu^\bullet$};
      \node (pi/m) [below right = 2.5cm of pi/m/*] {$\brc{\pi_\mu}$};
      \draw[>->,exists] (pi/k/*) to (pi/m/*);
      \draw[>->,color=RegalBlue] (pi/k/*) to (pi/m/*);
      \draw[>->] (pi/m/*) to node [sloped,above] {$\eta\Sub{\pi_\mu^\bullet}$} (pi/m);
      \draw[>->,exists] (pi/k/*) to (pi/m);
    \end{tikzpicture}
  \end{equation}

  We construct our natural transformation
  $\Mor|>->|{\pi_\kappa^\bullet}{\pi_\mu^\bullet}$ step-wise; the only
  interesting case is to define
  $\Mor|>->|{\pi_\kappa^{\alpha+1}}{\pi_\mu^{\alpha+1}}$ given
  $\Mor|>->|{\pi_\kappa^\alpha}{\pi_\mu^\alpha}$.
  By \cref{lem:good-colim-preserves-monos} it suffices to define a cartesian
  monomorphism between the defining spans of
  $\pi_\kappa^{\alpha+1},\pi_\mu^{\alpha+1}$, since they are pushouts along
  monomorphisms and hence enjoy descent in $\ArrCat{\ECat}$. Such a morphism is
  trivially induced by the embedding that sends a realignment span
  $\GlSpan{f}{h}{\pi_\kappa^\alpha}$ to $\GlSpan{f}{h}{\pi_\kappa^{\alpha+1}}$
  by postcomposition with
  $\Mor|>->|{\pi_\kappa^\alpha}{\pi_\kappa^{\alpha+1}}$.
\end{proof}

\begin{lemma}
  $U_\kappa$ is $\mu$-compact.
\end{lemma}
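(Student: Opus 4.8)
The plan is to prove, by transfinite induction on $\alpha < \kappa$, that each stage $U_\kappa^\alpha$ (the base of $\pi_\kappa^\alpha$) of the small object argument of \cref{con:soa} is $\mu$-compact; the lemma then follows at once, since $U_\kappa = \Colim{\OrdToCat{\kappa}}{U_\kappa^\bullet}$ is a colimit over the $\mu$-small poset $\OrdToCat{\kappa}$ (using $\kappa < \mu$), hence a $\mu$-small colimit of $\mu$-compact objects. Two standard facts will be used throughout: $\mu$-compact objects are closed under $\mu$-small colimits \citep{adamek-rosicky:1994}, and every $\kappa$-compact object is $\mu$-compact because $\kappa \le \mu$ (a $\mu$-filtered colimit is in particular $\kappa$-filtered).

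The base case is trivial: $\pi_\kappa^0 = \InitObj{\CartArrCat{\ECat}}$ has base $\InitObj{\ECat}$, and $\Hom[\ECat]{\InitObj{\ECat}}{-}$ is constant, so $\InitObj{\ECat}$ is $\lambda$-compact for every $\lambda$. The limit case is equally quick: for a limit ordinal $\alpha < \kappa$, the inductive hypothesis makes each $U_\kappa^\beta$ ($\beta < \alpha$) $\mu$-compact, and $U_\kappa^\alpha = \Colim{\OrdToCat{\alpha}}{U_\kappa^\bullet}$ is a colimit over the $\mu$-small poset $\OrdToCat{\alpha}$, hence $\mu$-compact.

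The successor case is where the real work lies. Assume $U_\kappa^\alpha$ is $\mu$-compact. By \cref{lem:colim-in-cart-arr} the pushout \cref{diag:soa-succ} defining $\pi_\kappa^{\alpha+1}$ is computed pointwise in $\ArrCat{\ECat}$, so its base $U_\kappa^{\alpha+1}$ is the pushout in $\ECat$ of $U_\kappa^\alpha$ and $\Coprod_{d}B_d$ along $\Coprod_{d}A_d$, where $d$ runs over $\GlData<\kappa>{\pi_\kappa^\alpha}$ and $\Mor|>->|{A_d}{B_d}\in\GenCls$ is the horizontal extent of the realignment datum $d$. Now $\pi_\kappa^\alpha$ is relatively $\kappa$-compact by \cref{lem:soa:stages-rel-pres}, so, $U_\kappa^\alpha$ being $\mu$-compact by hypothesis, \cref{lem:small-set-of-realignment-datums} (applicable since $\mu > \kappa$ is strongly inaccessible) guarantees that $\GlData<\kappa>{\pi_\kappa^\alpha}$ is $\mu$-small. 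Each $A_d$ and $B_d$ is $\kappa$-compact by \cref{lem:small-set-of-monos}, hence $\mu$-compact, so the two coproducts are $\mu$-small colimits of $\mu$-compact objects and therefore $\mu$-compact; being a finite---hence $\mu$-small---colimit of $\mu$-compact objects, $U_\kappa^{\alpha+1}$ is $\mu$-compact, completing the induction.

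I do not anticipate a genuine obstacle. The one delicate point is that the successor step uses the inductive hypothesis ``$U_\kappa^\alpha$ is $\mu$-compact'' twice---once to bound the index set $\GlData<\kappa>{\pi_\kappa^\alpha}$ via \cref{lem:small-set-of-realignment-datums}, and once as one of the three $\mu$-compact vertices of the pushout---but since the induction is precisely on $\alpha$, this is exactly what is available, so there is no circularity.
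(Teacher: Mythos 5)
Your proof is correct and follows essentially the same route as the paper: transfinite induction on the stages $U_\kappa^\alpha$, with the successor case handled by combining \cref{lem:small-set-of-monos} and \cref{lem:small-set-of-realignment-datums} to exhibit the defining pushout as a $\mu$-small colimit of $\mu$-compact objects, and the limit case and final conclusion following from closure of $\mu$-compact objects under $\mu$-small colimits. Your additional remarks (the base case, the $\kappa$-compact implies $\mu$-compact step, and the double use of the inductive hypothesis) merely make explicit what the paper leaves implicit.
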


\begin{proof}
  We argue that $U_\kappa$ is $\mu$-compact by showing that it is the $\mu$-small colimit of
  $\mu$-small objects. Recall that $U_\kappa = \Colim{\OrdToCat{\kappa}} U_\kappa^\bullet$, so it
  suffices to argue that $U_\kappa^\alpha$ is $\mu$-compact for each $\alpha < \kappa$.

  We show this by transfinite induction on $\alpha < \kappa$. The limit case is immediate:
  $U_\kappa^\alpha$ is then a $\mu$-small colimit of $\mu$-compact objects. Fix $\alpha < \kappa$
  and assume that $U_\kappa^\alpha$ is $\mu$-small. $U_\kappa^{\alpha + 1}$ is defined as the
  following pushout:
  \[
    \DiagramSquare{
      width = 3cm,
      se/style = pushout,
      west/style = >->,
      east/style = {exists,>->},
      south/style = {->,exists},
      nw = \Coprod{d\in\GlData<\kappa>{\pi_\kappa^\alpha}} \Cod{h_d},
      ne = U_\kappa^\alpha,
      sw = \Coprod{d\in\GlData<\kappa>{\pi_\kappa^\alpha}} \Cod{f_d},
      se = U_\kappa^{\alpha+1}
    }
  \]

  By \cref{lem:small-set-of-monos,lem:small-set-of-realignment-datums} together with our assumption that
  $U_\kappa^\alpha$ is $\mu$-compact, this is a $\mu$-small colimit of $\mu$-compact objects so
  $U_\kappa^{\alpha + 1}$ is $\mu$-compact.
\end{proof}

Given a poset $(I, \le)$ and conservative functor $\Mor[\lambda]{I}{\CoSl{\mathbf{Card}}{\kappa}}$
of strongly inaccessible cardinals, these results extend to a hierarchy of universes indexed in $I$:
\begin{corollary}
  \label[corollary]{cor:hierarchy}
  Each universe $S_{\lambda_{i}}$ satisfies \textbf{(U1--8)} and for each $i < j$, there is a
  cartesian monomorphism $\Mor|>->|{\pi_{\lambda_i}}{\pi_{\lambda_j}}$ and $\Cod{\pi_{\lambda_i}}$
  is $\lambda_j$-compact.
\end{corollary}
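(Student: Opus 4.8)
The plan is to obtain the corollary by applying results already established to each member of the hierarchy individually and to each comparable pair; there is no genuine obstacle, only some bookkeeping to do in the generality of an arbitrary indexing poset.

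For the first assertion, fix $i\in I$. The cardinal $\lambda_i$ is strongly inaccessible and lies above $\kappa$; we take it (as we may, enlarging the hierarchy if necessary) to satisfy the standing assumptions of \cref{thm:compact-universe}. Then \cref{cor:rel-cpt-universe-all-axioms}, read with $\lambda_i$ in place of $\kappa$, furnishes a generic map $\pi_{\lambda_i}$ for the class $S_{\lambda_i} = \Cls_{\lambda_i}$ of relatively $\lambda_i$-compact maps --- built by the small object argument of \cref{con:soa} applied to $\lambda_i$ --- and shows that $S_{\lambda_i}$ satisfies \textbf{(U1--8)}.

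For the comparison data, fix $i<j$ in $I$. Because $\CoSl{\mathbf{Card}}{\kappa}$ is a preorder, $\lambda$ carries the morphism $i\to j$ to an isomorphism precisely when $\lambda_i=\lambda_j$; were that so, conservativity of $\lambda$ would give $i=j$, contradicting $i<j$. Hence $\lambda_i<\lambda_j$ strictly, and we are in the situation of the two lemmas immediately preceding, specialized to $\kappa\coloneqq\lambda_i$ and $\mu\coloneqq\lambda_j$: $\lambda_j$ is strongly inaccessible and strictly above the strongly inaccessible $\lambda_i$. The first of those lemmas produces the cartesian monomorphism $\Mor|>->|{\pi_{\lambda_i}}{\pi_{\lambda_j}}$; its proof uses only descent for filtered colimits (\cref{lem:filtered-colimit-descent}) and for pushouts along monomorphisms, \cref{lem:good-colim-preserves-monos}, and the step-wise embedding of realignment data, none of which is sensitive to which strongly inaccessibles are in play. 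The second lemma shows that $\Cod{\pi_{\lambda_i}} = U_{\lambda_i}$ is $\lambda_j$-compact, by presenting $U_{\lambda_i}$ as a $\lambda_j$-small colimit of $\lambda_j$-compact objects via \cref{lem:small-set-of-monos,lem:small-set-of-realignment-datums}.

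The two points that need genuine checking are exactly those used above: that each $\lambda_i$ meets the hypotheses of \cref{thm:compact-universe} (so that \cref{cor:rel-cpt-universe-all-axioms} applies), and that conservativity of $\lambda$ upgrades $i<j$ to the strict inequality $\lambda_i<\lambda_j$ needed to invoke the monomorphism and compactness lemmas; both are immediate. If one further wants the family of cartesian monomorphisms $\Mor|>->|{\pi_{\lambda_i}}{\pi_{\lambda_j}}$ to be \emph{coherent}, i.e. to assemble into a functor $I\to\CartArrCat{\ECat}$, one observes that the step-wise construction in the first lemma is canonical --- the inclusion of sets of realignment data is functorial in the universe being enlarged --- so a single recursion over $I$ produces a compatible system; but the statement as given asks only for the pairwise data, so nothing beyond the two preceding lemmas is required.
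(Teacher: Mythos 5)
Your proposal is correct and follows exactly the route the paper intends: the paper offers no separate proof of \cref{cor:hierarchy}, presenting it as the immediate pairwise application of \cref{cor:rel-cpt-universe-all-axioms} together with the two lemmas of \cref{sec:hierarchy}, which is precisely what you do. Your observations that conservativity of $\lambda$ upgrades $i<j$ to $\lambda_i<\lambda_j$ and that the hypotheses of \cref{thm:compact-universe} must be checked for each $\lambda_i$ are the right (and only) bookkeeping points.
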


\section{Relating internal formulations of realignment}\label{sec:internal-formulations}

We have focused on the external formulation of realignment as a property of a
class of maps; recent years have seen several applications of type-theoretic
formulation of realignment that employs the internal language of a topos. In
\cref{sec:internal-realignment} we discuss a logical formulation popularized by
Orton and Pitts, which we compare with a more geometrical formulation due to
Sterling in \cref{sec:recollement} that mirrors the recollement of a space from
open and closed subspaces, completing the latent analogy with Artin gluing.

\subsection{Internal realignment \`a la Orton and Pitts}\label{sec:internal-realignment}

In another guise, \citet{cchm:2017} has employed the realignment property in the
cubical set model of cubical type theory, later rephrased into the internal
language of topoi by \citet{bbcgsv:2016} and employed by
\citet{orton-pitts:2016} to give more abstract and general constructions of
models of cubical type theory in presheaf topoi.

In what follows, we fix a universe $\Cls$ satisfying \textbf{(U1--5)} such that,
in particular, there is a generic map $\Mor[\pi]{E}{U}$ for $\Cls$. We recall
the internal version of the realignment axiom for $U$ below as presented by
\textcite[Axiom 9 (\texttt{ax}\textsubscript{9})]{orton-pitts:2016}, using
informal type theoretic notations.

\begin{notation}
  For any $B:U$, an \emph{isomorph} of $B$ is defined to be a type $A:U$
  together with an isomorphism $f : A \cong B$. We will write $\Iso{\Cls}{B}
  \coloneqq \Sum{A:U}{A\cong B}$ for the type of isomorphs of $B$,
  and $\Iso{\Cls}\coloneqq\Sum{B:U}\Iso{\Cls}{B}$ for the object of
  isomorphisms.
\end{notation}

\begin{notation}
  We will write $X^+$ for the partial map classifier $\Sum{\phi:\Omega}
  X^\phi$, and $\eta^+: X \to X^+$ for its unit.
\end{notation}

\begin{definition}\label[definition]{def:internal-glue}
  A \emph{realignment structure} is defined to
  be an element of the dependent type
  $
    \Prod{B : U}\Prod{A:\Iso{\Cls}{B}^+}
    \Compr{G:\Iso{\Cls}{B}}{
      {A{\downarrow}} \to
      A = \eta^+\prn{G}
    }
  $.
  The \emph{realignment axiom} on $U$ postulates the existence of a realignment
  structure.
\end{definition}

Combining the application described in
\cref{sec:realignment-stc} with the internal perspective of
\citet{orton-pitts:2016}, the realignment operation is included as
an axiom of \emph{synthetic Tait computability}~\citep{sterling:2021:thesis},
the mathematical framework behind the recent normalization result for cubical
type theory~\citep{sterling-angiuli:2021}.

We demonstrate in \cref{lem:external-to-internal,lem:internal-to-external} that
the existence of realignment structures in the sense of
\cref{def:internal-glue} is equivalent to the realignment property of
\cref{def:realignment}.

\begin{notation}
  We will write
  $\Iso{\Cls}^* : \Sl{\ECat}{\Iso{\Cls}}$ for the dependent type
  $I:\Iso{\Cls}\vdash \pi_1\prn{I}$ of pointed isomorphisms.
  We define the type $\Desc{\Cls}$ of $\Cls$-\emph{realignment data} to be the
  dependent sum $\Sum{B:U}\Iso{\Cls}{B}^+$. We will write
  $\Desc{\Cls}^* : \Sl{\ECat}{\Desc{\Cls}}$
  for the dependent type $D:\Desc{\Cls}\vdash \pi_1\prn{D}$ of pointed realignment data.
\end{notation}

\begin{lemma}\label[lemma]{lem:external-to-internal}
  Let $\Cls$ be a universe satisfying \textbf{(U8)} for the class of all
  monomorphisms; then $\Cls$ has a realignment structure.
\end{lemma}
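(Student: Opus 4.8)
The plan is to unpack \cref{def:internal-glue} through the semantics of the internal language and recognize it as a single instance of the external realignment property of \cref{rem:unfolded-realignment}. A realignment structure is a global section of a $\Prod{B:U}\Prod{A:\Iso{\Cls}{B}^+}\Sum{G}\prn{\ldots}$-type, which is the same datum as a section over the object of realignment data $\Desc{\Cls}=\Sum{B:U}\Iso{\Cls}{B}^+$ of the fibrewise type $\Sum{G:\Iso{\Cls}{B}}\prn{{A\downarrow}\to A=\eta^+\prn{G}}$, built from the \emph{generic} realignment datum. So I would first unwind this generic datum externally: writing $\Mor[\pi]{E}{U}$ for the generic map, the generic $B$ is a map $\Mor[B]{\Desc{\Cls}}{U}$ classifying $Q\coloneqq B^*E$, which lies in $\Cls$ by \textbf{(U1)}; the generic proof of ${A\downarrow}$ is a monomorphism $\Mor|>->|[m]{R}{\Desc{\Cls}}$; and the generic partial isomorph restricts over $R$ to a code $\Mor[a]{R}{U}$ together with an isomorphism $\Mor[\theta]{a^*E}{m^*Q}$ over $R$.

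With this in hand I would feed \textbf{(U8)} the following realignment problem, in the notation of \cref{rem:unfolded-realignment} with $m$ our monomorphism $R\rightarrowtail\Desc{\Cls}$ and $f$ the projection $\Mor{Q}{\Desc{\Cls}}$ (so that $m^*f$ has total space $m^*Q$ and base $R$): the canonical cartesian map $\Mor[\epsilon_a]{a^*E}{E}$ lying over $a$, precomposed with $\theta^{-1}$, is the top edge of a cartesian square $\Mor{m^*f}{\pi}$ lying over $a$. Realignment extends this along $m$ to a cartesian square $\Mor{f}{\pi}$ lying over some $\Mor[g]{\Desc{\Cls}}{U}$ with $g\circ m=a$ and whose total edge $\Mor[\beta]{Q}{E}$ restricts along $m^*Q\hookrightarrow Q$ to $\epsilon_a\circ\theta^{-1}$. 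The cartesian square $\prn{Q,\beta,g}$ exhibits $Q$ as a pullback of $\pi$ along $g$, hence furnishes a canonical isomorphism $\Mor[\chi]{Q}{g^*E}$ over $\Desc{\Cls}$; I then set $G\coloneqq\prn{g,\chi^{-1}}:\Iso{\Cls}{B}$, with underlying code $g$ and isomorphism $\Mor{g^*E}{B^*E}$ given by $\chi^{-1}$.

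It remains to verify the boundary condition ${A\downarrow}\to A=\eta^+\prn{G}$, which externally is exactly the assertion that $G$ pulls back along $m$ to the generic partial isomorph over $R$, \ie that $g\circ m=a$ and $m^*\prn{\chi^{-1}}=\theta$. The first holds by construction. For the second, $m^*\chi$ and $\theta^{-1}$ are both morphisms $\Mor{m^*Q}{a^*E}$ over $R$ whose composite with $\epsilon_a$ is $\epsilon_a\circ\theta^{-1}$ — for $m^*\chi$ this is the restriction along $m$ of the identity $\epsilon_g\circ\chi=\beta$ — and since $a^*E=R\times_U E$ is a pullback, such a morphism is unique, whence $m^*\chi=\theta^{-1}$ and $m^*\prn{\chi^{-1}}=\theta$. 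Packaging $G$ with this proof yields the required section over $\Desc{\Cls}$, hence the realignment structure. The step I expect to be the main obstacle is precisely this on-the-nose boundary check: it is essential that \textbf{(U8)} returns a representative $\beta$ that \emph{strictly} restricts to $\epsilon_a\circ\theta^{-1}$, not merely one agreeing up to isomorphism, since the internal equation $A=\eta^+\prn{G}$ demands literal equality of both the code and the isomorphism data; the rest is the routine translation between global sections of a $\Prod\Prod\Sum$-type and sections over $\Desc{\Cls}$.
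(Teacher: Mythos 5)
Your proof is correct and is essentially the paper's argument: both solve the realignment problem once, at the \emph{generic} realignment datum, by applying \textbf{(U8)} to the monomorphism from the object of total isomorphs (your $R$, the extent of the generic support, which is the paper's $\Iso{\Cls}\rightarrowtail\Desc{\Cls}$) into $\Desc{\Cls}$, with the partial family classified via the domain of the generic isomorphism. The paper states this tersely in the fibrational language of cartesian squares, while you unpack the externalization and the strict boundary check explicitly, but the underlying construction is identical.
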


\begin{proof}
  We have a cartesian monomorphism $\Mor|>->|{\Iso{\Cls}^*}{\Desc{\Cls}^*}$
  that turns an isomorphism into the corresponding \emph{total} realignment datum
  with $\phi\coloneqq\top$. Taking the domain of an isomorphism
  corresponds to a cartesian map $\Mor{\Iso{\Cls}^*}{\pi}$.
  Combining these, we may rephrase \cref{def:internal-glue} as the existence of
  a cartesian morphism $\Mor{\Desc{\Cls}^*}{\pi}$ in the following
  configuration:
  \begin{equation}\label[diagram]{diag:iso-desc-lift}
    \begin{tikzpicture}[diagram,baseline=(sw.base)]
      \node (nw) {$\Iso{\Cls}^*$};
      \node (sw) [below = of nw] {$\Desc{\Cls}^*$};
      \node (ne) [right = 2.5cm of nw] {$\pi$};
      \path[->] (nw) edge (ne);
      \path[>->] (nw) edge (sw);
      \path[->,exists] (sw) edge (ne);
    \end{tikzpicture}
  \end{equation}

  The dotted map of \cref{diag:iso-desc-lift} exists by the realignment
  axiom because $\Desc{\Cls}^*\in\Cls$.
\end{proof}

\begin{lemma}\label[lemma]{lem:internal-to-external}
  Suppose that $\Cls$ has a realignment structure; then $\Cls$ satisfies
  \textbf{(U8)} for the class of all monomorphisms.
\end{lemma}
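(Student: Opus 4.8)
The plan is to dualize the argument of \cref{lem:external-to-internal}. First observe that a realignment structure on $\Cls$ is the same datum as a cartesian morphism $r\colon \Desc{\Cls}^* \to \pi$ whose restriction along the cartesian monomorphism $\Mor|>->|{\Iso{\Cls}^*}{\Desc{\Cls}^*}$ is the canonical cartesian map $\delta\colon\Iso{\Cls}^*\to\pi$ that sends a pointed isomorphism to its domain (the map appearing in the proof of \cref{lem:external-to-internal}). Indeed, unfolding \cref{def:internal-glue} in context $\Desc{\Cls}=\Sum{B:U}\Iso{\Cls}{B}^+$, a realignment structure picks for each realignment datum an isomorph of its underlying code, which externally is exactly a classifying (hence cartesian) map $r\colon\Desc{\Cls}^*\to\pi$; the equational clause ${A{\downarrow}}\to A=\eta^+(G)$ says precisely that $r$ agrees with $\delta$ on the sub-datum $\Iso{\Cls}^*$ of total isomorphs. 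So it suffices to solve an arbitrary realignment problem given such an $r$.

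Fix a realignment problem in the unfolded form of \cref{rem:unfolded-realignment}: a monomorphism $\Mor|>->|[m]{A}{B}$, a map $\Mor[f]{Q}{B}\in\Cls$, and a cartesian square $\Mor[\alpha]{m^*f}{\pi}$ with classifying map $\Mor[\chi_A]{A}{U}$ and accompanying structural isomorphism over $A$. Since $\Cls$ is a universe, \textbf{(U5)} furnishes a classifying map $\Mor[c_0]{B}{U}$ for $f$ together with a structural isomorphism $\theta$ exhibiting $f$ as $c_0^*\pi$. Reading $m$ internally as a predicate $\Mor[\phi]{B}{\Omega}$, define a map $\Mor[d]{B}{\Desc{\Cls}}$ sending $b:B$ to the realignment datum whose underlying code is $c_0(b)$ and whose partial isomorph has domain of definition $\phi(b)$ and value, when $b=m(a)$, the isomorph $\chi_A(a)$ of $c_0(b)$ equipped with the composite $\chi_A(a)\cong (m^*Q)_a = Q_b \cong c_0(b)$ built from $\alpha$'s structural isomorphism followed by $\theta$; this is well defined because $m$ is a monomorphism, so $a$ is determined by the equation $b=m(a)$. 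By construction the iso $\theta$ exhibits $f\cong d^*\Desc{\Cls}^*$ over $B$, and $d\circ m$ factors through $\Mor|>->|{\Iso{\Cls}}{\Desc{\Cls}}$ as a map $\bar d\colon A\to\Iso{\Cls}$ — the partial isomorph becomes total, being $\eta^+$ of the datum read off from $\alpha$.

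Now compose the cartesian squares $f \xrightarrow{\ \cong\ } d^*\Desc{\Cls}^* \to \Desc{\Cls}^* \xrightarrow{\ r\ } \pi$ to obtain a cartesian square $f\to\pi$ with classifying map $B\to U$. Its restriction along $m$ is the composite $m^*f \xrightarrow{\ \cong\ } \bar d^*\Iso{\Cls}^* \to \Iso{\Cls}^* \xrightarrow{\ \delta\ } \pi$, using the boundary condition $r|_{\Iso{\Cls}^*}=\delta$. Tracing through, $\delta$ classifies $\bar d^*\Iso{\Cls}^*$ by the code $\chi_A$, and the two occurrences of $\theta$ (one from $f\cong d^*\Desc{\Cls}^*$, one hidden in the isomorph we placed in $d$) cancel, so this composite is exactly $\alpha$. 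Hence the constructed cartesian square $f\to\pi$ extends $\alpha$ along $m$, which is \textbf{(U8)} for all monomorphisms.

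The main obstacle is bookkeeping rather than ideas: one must carry the structural isomorphism $\theta$ of \textbf{(U5)} through the definition of $d$ so that, over $A$, the two copies of $\theta$ cancel and the restriction of the constructed square is $\alpha$ \emph{on the nose} rather than merely up to isomorphism. This strictness is exactly what the equational clause ${A{\downarrow}}\to A=\eta^+(G)$ of \cref{def:internal-glue} secures, and it is legitimate because we reason in the extensional internal language of the $1$-topos $\ECat$; the only remaining point — that this internal-language reasoning about $U$, $\pi$, $\Omega$ and the partial map classifier externalizes to the stated property of $\Cls$ — is routine.
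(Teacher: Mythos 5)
Your proposal is correct and follows essentially the same route as the paper: use \textbf{(U5)} to classify $f$, read the mono as a predicate to build a map into $\Desc{\Cls}$ (code plus partial isomorph), observe that over the subobject this lands in $\Iso{\Cls}$, and post-compose with the realignment structure. You are somewhat more explicit than the paper about carrying the structural isomorphism $\theta$ through the construction and verifying the on-the-nose cancellation over $A$, which is a welcome clarification rather than a deviation.
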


\begin{proof}
  We transform external realignment problems into internal ones. Fix a span of cartesian maps as below such that $f\in \Cls$:
  \begin{equation}
    \begin{tikzpicture}[diagram,baseline=(U.base)]
      \node (h) [pullback, flipped pullback] {$h$};
      \node (f) [left = of h] {$f$};
      \node (pi) [right = of h] {$\pi$};
      \node (Phi) [below = of h] {$\Phi$};
      \node (Gm) [below = of f] {$\Gamma$};
      \node (U) [below = of pi] {$U$};
      \path[->,color=RedDevil] (h) edge node [above] {} (pi);
      \path[>->,color=RegalBlue] (h) edge (f);
      \path[>->,color=RegalBlue] (Phi) edge node [below] {$p_\phi$} (Gm);
      \path[->,color=RedDevil] (Phi) edge node [below] {$A$} (U);
      \path[lies over] (h) edge (Phi);
      \path[lies over] (f) edge (Gm);
      \path[lies over] (pi) edge (U);
    \end{tikzpicture}
  \end{equation}

  Because $f\in \Cls$, we additionally have:
  \begin{equation}
    \begin{tikzpicture}[diagram,baseline=(l/sw.base)]
      \SpliceDiagramSquare<l/>{
        west/style = lies over,
        east/style = lies over,
        south/style = {color=RegalBlue,>->},
        north/style = {color=RegalBlue,>->},
        nw = h,
        ne = f,
        sw = \Phi,
        se = \Gamma,
        south = p_\phi,
        nw/style = pullback,
        ne/style = pullback,
      }
      \SpliceDiagramSquare<r/>{
        glue = west, glue target = l/,
        east/style = lies over,
        se = U,
        ne = \pi,
        south = B,
      }
    \end{tikzpicture}
  \end{equation}

  We take the characteristic map of $\Phi$:
  \begin{equation}
    \DiagramSquare{
      nw = \Phi,
      sw = \Gamma,
      ne = \TermObj{\ECat},
      se = \Omega,
      west = p_\phi,
      east = \top,
      south = \phi,
      west/style = >->,
      south/style = {->,exists},
      nw/style= pullback,
    }
  \end{equation}

  We have a map $\Mor{\Phi}{\Iso{\Cls}{B\circ p_\phi}}$ determined by $A$,
  which we observe forms the base of a cartesian map $\Mor{h}{\Iso{\Cls}^*}$. On
  the other hand, we have a map $\Mor{\Gamma}{\Iso{\Cls}{B}^+}$, \ie a partial
  isomorphism with support $\phi$ between $A$ and $B\circ p_\phi$.
  Therefore we
  have a realignment datum $\Mor{\Gamma}{\Desc{\Cls}}$ determined by $B$ and our
  partial isomorphism; in fact, this is the base of a cartesian map
  $\Mor{f}{\Desc{\Cls}^*}$ which we may compose with the realignment
  structure to obtain the desired factorization:
  \begin{equation}
    \begin{tikzpicture}[diagram,baseline=(sw.base)]
      \node (nw) {$\Iso{\Cls}^*$};
      \node (sw) [below = of nw] {$\Desc{\Cls}^*$};
      \node (nww) [left = of nw] {$h$};
      \node (sww) [left = of sw] {$f$};
      \node (ne) [right = 2.5cm of nw] {$\pi$};
      \path[->] (nw) edge (ne);
      \path[->,color=RedDevil,bend left=30] (nww) edge (ne);
      \path[>->] (nw) edge (sw);
      \path[->] (sw) edge (ne);
      \path[->] (sww) edge (sw);
      \path[>->,color=RegalBlue] (nww) edge (sww);
      \path[->] (nww) edge (nw);
      \path[->,exists,bend right=70] (sww) edge (ne);
    \end{tikzpicture}
  \end{equation}

  In short, we solved the realignment problem by restricting from the \emph{generic} case.
\end{proof}

\subsection{Realignment and recollement}\label{sec:recollement}

Sterling has recently advanced an alternative~\citep{sterling-harper:2022} to
the internal characterization of Orton and Pitts (\cref{sec:internal-realignment})
based on the \emph{recollement} of a sheaf from its components over
complementary open and closed subspaces. We recall the basics of the theory
from SGA~4~\citep{sga:4}.

When $\XCat$ is a topos, a subterminal object $\Mor|>->|{J}{\TermObj{\XCat}}$
corresponds to an open subtopos $\Sl{\XCat}{J}$ such that the open inclusion
geometric morphism $\Mor|open immersion|[j_*]{\Sl{\XCat}{J}}{\XCat}$ is the
right adjoint to the pullback functor $\Mor[j^*]{\XCat}{\Sl{\XCat}{J}}$ that sends
$E$ to $\Mor{E\times J}{J}$. Meanwhile we may form the complementary
\emph{closed} subtopos $\ClSubcat{\XCat}{U} = \XCat\setminus\Sl{\XCat}{J}$ by
considering the subcategory of $\XCat$ spanned by objects $E$ for which the
canonical map $\Mor{E\times J}{J}$ is an isomorphism. The closed inclusion
$\Mor|closed immersion|[i_*]{\ClSubcat{\XCat}{J}}{\XCat}$ then has a left exact
left adjoint $\Mor[i^*]{\XCat}{\ClSubcat{\XCat}{J}}$ taking $E$ to the join
$E\star J$, \ie the following pushout:
\[
  \DiagramSquare{
    nw = E\times J,
    sw = E,
    ne = J,
    se = E\star J,
    se/style = dotted pushout,
    nw/style = dotted pullback,
    west/style = >->,
    south/style = {exists,->},
    east/style = {exists,>->},
  }
\]

The Grothendieck school then develops both a \emph{global} and a \emph{local}
recollement theory for the open-closed partition
$\prn{\Sl{\XCat}{J},\ClSubcat{\XCat}{J}}$ of $\XCat$:

\subsubsection{Global recollement~\citep{sga:4}}\label{sec:global-recollement}
The topos $\XCat$ may be reconstructed from its open and closed subtopoi as
the comma category $\ClSubcat{\XCat}{J}\downarrow i^*j_*$, \ie the Artin gluing of $i^*j_*$. In other
words, the diagram below is pseudocartesian in the (very large) bicategory of all
categories, in which the upper functor
$\Mor[q]{\XCat}{\ArrCat{\ClSubcat{\XCat}{J}}}$ sends an object $E$ to the
morphism $i^*\prn{\Mor[\eta_E]{E}{j_*j^*E}}$ in $\ClSubcat{\XCat}{J}$.
\[
  \DiagramSquare{
    nw = \XCat,
    ne = \ArrCat{\ClSubcat{\XCat}{J}},
    se = \ClSubcat{\XCat}{J},
    east = \Cod\Sub{\ClSubcat{\XCat}{J}},
    south = i^*j_*,
    west = j^*,
    sw = \Sl{\XCat}{J},
    north = q,
    nw/style = pullback,
  }
  \qedhere
\]
\endproof

From the global recollement of the topos $\XCat$ from its open and closed
subtopoi, the Grothendieck school concludes a \emph{local} recollement or
\emph{fracture theorem} that reconstructs an object of the topos from its
components over the open and closed subtopoi.\footnote{Such a fracture theorem
is developed in much greater generality for left exact modalities by
\citet{rijke-shulman-spitters:2020}.}

\subsubsection{Local recollement~\citep{sga:4}}\label{sec:local-recollement}

Under the same assumptions, any object $E$ of $\XCat$ may be reconstructed from
its restrictions $j^*E,i^*E$ to the open and closed subtopoi respectively. In
particular, the following diagram is cartesian in $\XCat$:
\[
  \DiagramSquare{
    nw = E,
    ne = i_*i^*E,
    sw = j_*j^*E,
    se = i_*i^*j_*j^*E,
    north = \eta_E,
    west = \eta_E,
    east = {i_*i^*\eta_E = \mathrlap{i_*qE}},
    south = \eta\Sub{j_*j^*E},
    nw/style = pullback,
    width = 3cm,
  }
  \qedhere
\]
\endproof

The above follows immediately from the global recollement
(\cref{sec:global-recollement}); conversely, if $O:\Sl{\XCat}{J}$ is an object of the open
subtopos and $\Mor[p]{K}{i^*O}:\ClSubcat{\XCat}{J}$ is a family of objects in the closed subtopos,
then the pullback of the latter along $\Mor{O}{i_*j^*O}$ in $\XCat$ is a
morphism $\Mor{E}{j_*O}$ that is \emph{isomorphic} to the unit
$\Mor{E}{j_*j^*E}$:
\begin{equation}\label[diagram]{diag:recollement:KO}
  \begin{tikzpicture}[diagram,baseline=(sw.base)]
    \SpliceDiagramSquare{
      nw = E,
      ne = i_*K,
      sw = j_*O,
      se = i_*i^*j_*O,
      west = \eta\Sub{j_*O}^*\prn{i_*p},
      east = i_*p,
      south = \eta\Sub{j_*O},
      nw/style = pullback,
      west/node/style = upright desc,
      width = 3cm,
      height = 2.5cm,
    }
    \node (jE) [between = nw and sw, xshift=-2cm] {$j_*j^*E$};
    \draw[->] (nw) to node [sloped,above] {$\eta_E$} (jE);
    \draw[->] (jE) to node [sloped,below] {$\cong$} (sw);
  \end{tikzpicture}
\end{equation}

\begin{question}\label[question]{question:realignment}
  Can $E$ be chosen in \cref{diag:recollement:KO} to make the
  isomorphism $\Mor{j_*j^*E}{j_*O}$ an \emph{identity} map?
\end{question}

Although identity of objects is not properly part of the language of category
theory, it becomes meaningful when considering \emph{internal categories} as we
do in \cref{sec:internal-recollement} below. We will see that the realignment
axiom \textbf{(U8)} for a full internal subtopos corresponds to the ability to
construct a version of \cref{diag:recollement:KO} in which $j_*j^*E = j^*O$
strictly.

\subsubsection{Internal recollement}\label{sec:internal-recollement}

Let $\mathcal{U}$ be a universe in $\XCat$ and let $\Mor[p]{E}{U}$ be a generic
family for $\mathcal{U}$; then $U$ constitutes a \emph{full internal subtopos} of
$\XCat$ in the sense of \citet{benabou:1973}. Consequently we may think of
$\mathcal{U}$ as a topos $C^*U$ in every slice $\Sl{\XCat}{C}$ of $\XCat$; hence
any monomorphism $\Mor|>->|{J}{C}$ in $\XCat$ corresponds to a subterminal
object in $\Sl{\XCat}{C}$, \ie an open subtopos of $C^*U$. Therefore we may
replay the global and local recollement for each $C^*U$ using the same
constructions.

Letting $J:\Omega$ be a proposition in $\XCat$, we note that the exponential
family $\Mor{E^J}{U^J}$ is generic for the \emph{open} subtopos of $U$
determined by the proposition $J$. We will write $\Mor[J_*]{U^J}{U}$ for the
function that sends a family $O:U^J$ to its dependent product $\Prod{z:J}Oz$;
the left adjoint $\Mor[J^*]{U}{U^J}$ takes a type $A$ to the constant family
$\lambda\_:J.A$.
Likewise we may obtain a generic family for the \emph{closed}
subtopos by considering the subobject $\ClSubcat{U}{J} \subseteq U$ spanned
by types $A$ such that $\Mor{p\brk{A}\times J}{J}$ is an isomorphism;
following \citet{rijke-shulman-spitters:2020}, we will refer to such types as
$J$-connected.

We may now revisit our \cref{question:realignment} concerning
\cref{diag:recollement:KO} in the internal language. Let $O:U^J$ be an object
of the open subtopos and let
$K:J_*O\to\ClSubcat{U}{J}$ be a family of $J$-connected
objects. Then an affirmative answer to \cref{question:realignment} would
produce some $E:U$ together with an isomorphism
$f_E:\prn{\Sum{x:J_*O}Kx}\to E$ in $U$ such that $j^*E = O$ strictly and $j^*f_E$ is strictly equal to
$\lambda z:J. \lambda \prn{x,y}. xz$. In other words, we are asking for a type
constructor $\Con{Glue}$ on $U$ with the following interface:
\begin{align*}
  \Con{Glue} &: \Prod{J:\Omega}\Prod{O:U^J} \Prod{K:{J_*O}\to\ClSubcat{U}{J}}\Compr{G:U}{\forall z:J. G = Oz}\\
  \Con{glue} &: \Prod{J:\Omega}\Prod{O:U^J}\Prod{K:{J_*O}\to\ClSubcat{U}{J}}
  \Compr{
    f:\prn{\Sum{x:J_*O}Kx}
    \cong
    \Con{Glue}\,O\,K
  }{
    \forall z:J.\forall x,y.
    f\prn{x,y} = xz
  }
\end{align*}

It is not difficult to verify that the existence of such a type constructor is
equivalent to the internal realignment axiom discussed in \cref{sec:internal-realignment}.

\begin{lemma}
  Let $G$ be a realignment structure for $U$ in the sense of
  \cref{def:internal-glue}; then there exists a $\Con{Glue}$ connective
  satisfying the described rules.
\end{lemma}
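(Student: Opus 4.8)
The plan is to solve a single, well-chosen realignment problem with the structure $G$ from \cref{def:internal-glue} and then read off $\Con{Glue}$ and $\Con{glue}$ from its output. Fix $J:\Omega$, $O:U^J$ and $K:J_*O\to\ClSubcat{U}{J}$, and write $B\coloneqq\Sum{x:J_*O}Kx:U$ for the prospective glued total space. The first step is to present $B$ as a \emph{partial isomorph of itself along $J$ that restricts to $O$}: that is, to build a partial element $A:\Iso{\Cls}{B}^+$ with support $J$ whose underlying type over $z:J$ is $Oz$.

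The point is that over the support $J$ the space $B$ collapses onto $O$. When $J$ is inhabited, say by $z:J$, it is contractible as a proposition, so evaluation at $z$ is an isomorphism $J_*O=\Prod{z':J}Oz'\cong Oz$; moreover each $Kx$ is $J$-connected and hence contractible once $J$ is inhabited. Assembling these, I obtain for every $z:J$ an isomorphism $e_z:Oz\cong B$ whose forward direction sends $a:Oz$ to the pair consisting of the constant-up-to-transport family in $J_*O$ together with the centre of the relevant fibre of $K$, and whose backward direction sends $\prn{x,y}:B$ to $xz:Oz$. Verifying that $e_z$ is an isomorphism is the routine core of the argument; it uses only that $J$ is a proposition and that $J$-connected types are contractible when $J$ is inhabited. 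The assignment $z\mapsto\prn{Oz,e_z}$ is then precisely a partial element $A:\Iso{\Cls}{B}^+$ supported on $J$.

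Now apply the realignment structure: $G\,B\,A:\Iso{\Cls}{B}$ is a total isomorph, say $\prn{T,g}$ with $g:T\cong B$, satisfying $A\downarrow\to A=\eta^+\prn{G\,B\,A}$. Unfolding this equation under the hypothesis $z:J$ gives $T=Oz$ and $g=e_z$ \emph{strictly}. I accordingly define $\Con{Glue}\,O\,K\coloneqq T$, which discharges the side condition $\forall z:J.\ \Con{Glue}\,O\,K=Oz$, and $\Con{glue}\,O\,K\coloneqq g^{-1}:\prn{\Sum{x:J_*O}Kx}\cong T=\Con{Glue}\,O\,K$. Since $g$ agrees strictly with $e_z$ whenever $z:J$, its inverse agrees strictly with the backward direction of $e_z$, namely $\prn{x,y}\mapsto xz$, which is exactly the remaining side condition $\forall z:J.\forall x,y.\ g^{-1}\prn{x,y}=xz$. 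Apart from the verification that $e_z$ is an isomorphism, the argument is pure bookkeeping driven by the strict equation supplied by $G$; I expect that verification — in particular the contractibility of the fibres of $K$ over the support — to be the only place any real work is needed.
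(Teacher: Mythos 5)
Your argument is correct and follows the same route as the paper's proof: set $B\coloneqq\Sum{x:J_*O}Kx$, build the partial isomorph of $B$ over $J$ with underlying type $Oz$ (which exists because $J_*O\cong Oz$ by evaluation once $z:J$ is given, and the fibers of $K$ are $J$-connected hence contractible over $J$), and feed this to the realignment structure $G$. You have merely spelled out the construction of $e_z$ and the verification of the boundary conditions, which the paper leaves implicit.
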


\begin{proof}
  Let $O,K$ as above and consider the application of $G$ to $B\coloneqq
  \Sum{x:J_*O}Kx$ and the partial isomorphism $z:J\vdash B \cong Oz$,
  which exists because each fiber of $K$ is $J$-connected. From this pair we
  thus obtain both $\Con{Glue}\,J\,O\,K$ and $\Con{glue}\,J\,O\,K$.
\end{proof}

\begin{lemma}
  Conversely, suppose that we have a $\Con{Glue}$ connective in the sense
  described above; then there exists a realignment structure in the sense of \cref{def:internal-glue}.
\end{lemma}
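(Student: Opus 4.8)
The input to a realignment structure, spelled out, is an object $B : U$ together with a partial isomorph $A : \Iso{\Cls}{B}^+$; write $A = \prn{\phi, a}$ with $\phi : \Omega$ and $a$ assigning to each witness $z : \brk{\phi}$ a pair $\prn{A_0 z, \iota_z}$ where $A_0 z : U$ and $\iota_z : A_0 z \cong B$. For each such $B$ and $A$ we must produce $G : \Iso{\Cls}{B}$ such that ${A\downarrow}$ forces $A = \eta^+\prn{G}$. The plan is to feed the $\Con{Glue}$ connective the proposition $J \coloneqq \phi$ and the open-part family $O \coloneqq A_0 : U^{\phi}$: then, no matter which $K$ we supply, $\Con{Glue}\,J\,O\,K$ already satisfies $\forall z:\phi.\, \Con{Glue}\,J\,O\,K = A_0 z$, so the whole problem reduces to choosing a family $K : J_*O \to \ClSubcat{U}{J}$ for which the total type $\Sum{x:J_*O}Kx$ is isomorphic to $B$.

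The key construction is a \emph{total} map $g : B \to J_*O$, namely $g\prn{b} \coloneqq \lambda z : \phi.\, \iota_z^{-1}\prn{b}$; its fibers will be the values of $K$, so I set $Kx \coloneqq \Sum{b:B}{\prn{g\prn{b} = x}}$. First I would check that $K$ lands in $\ClSubcat{U}{J}$: whenever $\phi$ holds, $J_*O = \Prod{z:\phi}{A_0 z}$ is isomorphic to $A_0 z$ by evaluation at the witness, and under this identification $g$ becomes $\iota_z^{-1}$, hence an isomorphism, so every fiber $Kx$ is contractible over $\phi$, i.e.\ $J$-connected. Second, $\Sum{x:J_*O}Kx$ is the total space of the fibers of $g$, hence isomorphic to $B$ via $e\prn{x,\prn{b,p}} \coloneqq b$. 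Applying the connective gives $G_0 \coloneqq \Con{Glue}\,J\,O\,K : U$ with $G_0 = A_0 z$ over $\phi$, and composing $e$ with the isomorphism $f : \prn{\Sum{x:J_*O}Kx} \cong G_0$ provided by $\Con{glue}$ yields $\gamma \coloneqq e \circ f^{-1} : G_0 \cong B$. We return $G \coloneqq \prn{G_0, \gamma}$.

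The remaining step — and I expect the only genuinely delicate one — is to verify ${A\downarrow} \to A = \eta^+\prn{G}$. Fixing a witness $z : \brk{\phi}$ we get $\phi = \top$ and $G_0 = A_0 z$ strictly, so the matter reduces to checking that $\gamma$ restricts over $\phi$ to the \emph{given} isomorphism $\iota_z$, not to some unrelated isomorphism $A_0 z \cong B$. This is exactly what the boundary clause $f\prn{x,y} = xz$ of $\Con{glue}$ provides: given $t : G_0$ (so also $t : A_0 z$) and writing $f^{-1}\prn{t} = \prn{x, \prn{b, p}}$, that clause gives $xz = t$, while $p : g\prn{b} = x$ gives $xz = \iota_z^{-1}\prn{b}$, whence $b = \iota_z\prn{t}$ and therefore $\gamma\prn{t} = e\prn{x, \prn{b, p}} = b = \iota_z\prn{t}$. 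Thus $\prn{G_0, \gamma} = \prn{A_0 z, \iota_z} = a\prn{z}$, which is the equation $A = \eta^+\prn{G}$ under ${A\downarrow}$. Everything else — the reassociation exhibiting $\Sum{x:J_*O}Kx \cong B$ and the bookkeeping of partial elements — is routine; the crux is arranging $g$, $K$, and $e$ so that this composite unwinds back to the original partial isomorphism.
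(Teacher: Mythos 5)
Your proposal is correct and follows essentially the same route as the paper: you take $J\coloneqq\phi$, $O\coloneqq A_0$, and your $K x=\Sum{b:B}\prn{g\prn{b}=x}$ is exactly the paper's $K x=\Compr{y:B}{\forall z:J.\,\iota_z\prn{xz}=y}$ (since $\iota_z^{-1}\prn{b}=xz$ iff $b=\iota_z\prn{xz}$), and your $e\circ f^{-1}$ is the paper's $\pi_2\circ\prn{\Con{glue}\,J\,O\,K}^{-1}$. The paper leaves the $J$-connectedness of the fibers and the boundary verification implicit; your explicit check of both is the right one.
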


\begin{proof}
  Given a type $B$ and a partial isomorph $\prn{J,A}:\Iso{\mathcal{U}}{B}^+$,
  we let $O \coloneqq \lambda z:J.\pi_1\prn{Az}$ and $K\coloneqq\lambda
  x:J_*O.\Compr{y:B}{\forall z:J.\prn{\pi_2\prn{Az}}\prn{xz} = y}$. Then we consider the total
  isomorph given by the pair $\prn{\Con{Glue}\,J\,O\,K,\pi_2\circ \prn{\Con{glue}\,J\,O\,K}^{-1}}$.
\end{proof}

The benefit of the present axiomatization is that a family of types being
fiberwise $J$-connected is a \emph{property}; in contrast, the Orton--Pitts
axiomatization (\cref{def:internal-glue}) requires every use of realignment to be accompanied by a
chosen isomorphism. We have gained significant experience with both
axiomatizations in the context of synthetic Tait
computability~\citep{sterling:2021:thesis,sterling-harper:2021,sterling-angiuli:2021,gratzer:2022:lics,sterling-harper:2022,niu-sterling-grodin-harper:2022},
and found that the present one is substantially simpler to use in practice.

\section{Applications of realignment}
\label{sec:examples}

An immediate consequence of \cref{sec:universe} is an interpretation of
Martin-L{\"o}f type theory with a cumulative hierarchy of universes in arbitrary
Grothendieck topoi (recall that we have assumed a hierarchy of Grothendieck
universes). In fact, the new interpretation of Martin-L{\"o}f type
theory in Grothendieck topoi enables more direct independence proofs for various
axioms such as Markov's principle. But the realignment property itself has
played an important role in the semantics of homotopy type theory as developed
by \citet{awodey:2021:qms}, Kapulkin, Lumsdaine, and Voevodsky~\citep{kapulkin-lumsdaine:2021}, \citet{streicher:2014:simplicial,stenzel:2019:thesis,shulman:2015:elegant,shulman:2019}.
In particular, realignment appears to be a necessary ingredient for
constructing a fibrant and univalent universe.  The same principle is employed
by \citet[Lemma~5.33]{sterling-angiuli-gratzer:2022} in their proof of
\emph{canonicity} for XTT, a variant of cubical type theory: in particular,
\opcit used a special case of \textbf{(U8)} to realign codes in the universe of
an Artin gluing over chosen codes in the universe of its open subtopos.

\subsection{Independence results for Martin-L\"of type theory}\label{sec:independence}

Sheaf semantics has historically been employed to prove independence results
for various forms of logic; the use of sheaf semantics to verify the analogous
results for dependent type theory with universes has been hampered by the
(now-resolved) difficulties in constructing well-behaved universes in sheaf
topoi. These difficulties have motivated two somewhat less direct methods for
proving independence results: constructing \emph{operational} or
\emph{relational} models of type theory using the Beth--Kripke--Joyal sheaf
semantics of predicate logic~\citep{coquand-mannaa:2016}, or by constructing
denotational models of type theory in \emph{stacks} rather than
sheaves~\citep{coquand-mannaa-ruch:2017}. The present work provides a more
direct approach, as the presence of universes validating \textbf{(U1--8)}
ensures a simple and direct denotational semantics of dependent type theory in
sheaves. We illustrate this through a concrete example and sketch a simpler
proof of the independence of Markov's principle.

\subsubsection{Independence of Markov's principle}

Markov's principle states that for any decidable property $P\prn{x}$ of natural
numbers, the proposition $\exists x.P{x}$ is $\lnot\lnot$-stable:
\[
  \forall P : \mathbb{N}\to \mathbf{2}.
  {\lnot\lnot\exists x. Px = 0}
  \to
  \exists x. Px = 0
\]

Formalized in the language of dependent type theory, Markov's principle is
rendered by \citet{coquand-mannaa:2016} equivalently as the existence of a
global element of the following type:
\[
  \Prod{P:\mathbb{N}\to\mathbf{2}}
  \prn{
    \lnot\lnot{\Sum{x:\mathbb{N}} Px = 0}
    \to
    \Sum{x:\mathbb{N}}Px=0
  }
\]

The independence of Markov's principle from intuitionistic higher-order logic
is established easily by considering the internal logic of the topos of sheaves
on Cantor space $\mathcal{C}$, \ie the space of infinite binary sequences
equipped with the product topology. If $\Sh{\mathcal{C}}$ did not model
universes, we would not however be able to use it directly to verify the
independence of Markov's principle from Martin-L\"of type theory with
universes.
Our result concerning universes in Grothendieck topoi, however, allows one to
immediately deduce the independence of Markov's principle from Martin-L\"of
type theory with universes without needing to pass to the significantly more
complex stack semantics of \citet{coquand-mannaa-ruch:2017}, bypassing as well
the detour through operational semantics of \citet{coquand-mannaa:2016}.

\begin{corollary}
  Neither Markov's principle nor its negation is derivable in Martin-L\"of type theory
  with a cumulative hierarchy of strict universes.
\end{corollary}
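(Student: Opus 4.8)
The plan is to exhibit two models of Martin-L\"of type theory with a cumulative hierarchy of strict universes --- one refuting Markov's principle and one refuting its negation --- and then appeal to soundness of the interpretation. Since Markov's principle makes no mention of universes, the only role the hierarchy plays here is that the models must support one; \cref{cor:hierarchy} supplies it uniformly.

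First I would fix Cantor space $\mathcal{C}$ and apply \cref{cor:hierarchy} to the Grothendieck topos $\Sh{\mathcal{C}}$: choosing a conservative $\omega$-indexed chain of strongly inaccessible cardinals one obtains generic maps $\pi_{\lambda_0}\rightarrowtail\pi_{\lambda_1}\rightarrowtail\cdots$ each validating \textbf{(U1--8)}, and by the coherence argument of \cref{sec:cumulative-hierarchy} the codes for $\Pi$, $\Sigma$, \etc{} can be strictified so as to commute with the coercions $\pi_{\lambda_i}\rightarrowtail\pi_{\lambda_{i+1}}$. Combined with the standard splitting technology this yields a model $\mathcal{M}_{\mathcal{C}}$ of Martin-L\"of type theory with a strictly cumulative universe hierarchy, in which $\mathbb{N}$, $\mathbf{2}$, $\Omega$ (available via \textbf{(U6)}), and the ambient $\Pi$- and $\Sigma$-types are interpreted by the corresponding objects and constructions of $\Sh{\mathcal{C}}$. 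Running the same construction over the terminal Grothendieck topos $\SET$ --- which \cref{cor:rel-cpt-universe-all-axioms} covers --- produces a second model $\mathcal{M}_{\SET}$ of the same theory in which every type is an ordinary set.

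Next I would check that in $\mathcal{M}_{\mathcal{C}}$ the interpretation of the Coquand--Mannaa type $\Prod{P:\mathbb{N}\to\mathbf{2}}\prn{\lnot\lnot\Sum{x:\mathbb{N}}Px=0\to\Sum{x:\mathbb{N}}Px=0}$ has a global element if and only if Markov's principle holds in the internal logic of $\Sh{\mathcal{C}}$; this is the propositions-as-$(-1)$-types bookkeeping, using that the interpretations named above agree with their honest topos-theoretic counterparts. Since the internal logic of $\Sh{\mathcal{C}}$ refutes Markov's principle --- the classical sheaf-semantic independence result, see \eg{} \citet{coquand-mannaa:2016} --- the type has no closed term in $\mathcal{M}_{\mathcal{C}}$, so by soundness Markov's principle is not derivable. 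Dually, working in a classical metatheory, Markov's principle (indeed excluded middle) holds internally in $\SET$, and $\mathcal{M}_{\SET}$ is a consistent model, so the empty type is not inhabited there; hence $\lnot\mathrm{MP}$ has no closed term in $\mathcal{M}_{\SET}$ and is not derivable either.

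The step I expect to be the real work is the ``matching'' in the third paragraph: one must verify that the model $\mathcal{M}_{\mathcal{C}}$ furnished by \cref{cor:hierarchy} interprets $\mathbb{N}$, $\mathbf{2}$, the $\lnot\lnot$-modality assembled from $\Omega$, and the surrounding $\Pi$/$\Sigma$ structure \emph{exactly} as the standard constructions in $\Sh{\mathcal{C}}$, so that a closed inhabitant of the $\mathrm{MP}$ type genuinely witnesses internal Markov's principle. Everything else is a black-box invocation of \cref{cor:hierarchy} together with classical Beth--Kripke--Joyal sheaf semantics; this compatibility check is the one place where the argument touches the particulars of the universe construction rather than merely its formal adequacy, and it is routine but should be spelled out with care.
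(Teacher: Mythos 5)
Your proposal is correct and takes essentially the same route as the paper's implicit argument: the paper gives no explicit proof of this corollary but clearly intends the argument you spell out, namely that \cref{cor:hierarchy} equips $\Sh{\mathcal{C}}$ (refuting $\mathrm{MP}$) and $\SET$ (refuting $\lnot\mathrm{MP}$) with strict cumulative universe hierarchies so that the classical sheaf-semantic independence result applies directly rather than via stacks or operational methods. You are more explicit than the paper about the $\lnot\mathrm{MP}$ direction and about the soundness/matching bookkeeping, both of which the paper leaves to the reader.
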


\subsection{Semantics of the univalent universes}\label{sec:univalence}

The semantics of univalent universes has proved to be a crucial technical
difficulty in models of homotopy type theory and cubical type theory; in
paticular, it is necessary to translate facts between the language of model
category theory and the language of universes.  We briefly illustrate how
judicious application of \textbf{(U8)} has been used in the literature to
entirely eliminate these
difficulties~\citep{streicher:2014:simplicial,kapulkin-lumsdaine:2021,shulman:2015:elegant,shulman:2019,awodey:2021:qms}.
In fact, this observation was the original motivation for
\citet{shulman:2015:elegant} to isolate \textbf{(U8)}.

We illustrate the utility of \textbf{(U8)} by tracing through the salient
aspects of the model given by Kapulkin, Lumsdaine, and Voevodsky~\citep{kapulkin-lumsdaine:2021} and defer to
\citet{shulman:2015:elegant,shulman:2019} for a more systematic approach. Concretely, we
will work in $\SSET$ and fix a pair of strongly inaccessible cardinals
$\kappa_0 < \kappa_1$ inducing universes $\mathcal{V}_0 \subseteq \mathcal{V}_1$
each satisfying \textbf{(U1--8)}. Moreover, by \cref{sec:hierarchy}, we can
choose a generic map for $\mathcal{V}_0$ whose base lies in $\mathcal{V}_1$.

Let $\mathcal{U}_i \subseteq \mathcal{V}_i$ be the class of Kan fibrations in
$\mathcal{V}_i$.

\begin{lemma}
  The class of maps $\mathcal{U}_i$ satisfies \textbf{(U1,3,4,8)}.
\end{lemma}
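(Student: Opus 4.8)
The plan is to verify the four axioms in turn, exploiting throughout that $\mathcal{U}_i = \{\text{Kan fibrations}\} \cap \mathcal{V}_i$. Axioms \textbf{(U1)} and \textbf{(U3)} are immediate: Kan fibrations are stable under pullback and closed under composition by the model category axioms, and $\mathcal{V}_i$ has these two closure properties since it satisfies \textbf{(U1,3)}; hence so does the intersection $\mathcal{U}_i$, which in particular is a stable class of maps.

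For \textbf{(U4)}, fix $\Mor[f]{A}{I}$ and $\Mor[g]{B}{A}$ in $\mathcal{U}_i$. Since $\mathcal{V}_i$ satisfies \textbf{(U4)}, the morphism $f_*g$ is relatively $\kappa_i$-compact, so it only remains to see that it is a Kan fibration. I would argue that, $f$ being a Kan fibration, the pullback functor $\Mor[f^*]{\Sl{\SSET}{I}}{\Sl{\SSET}{A}}$ is left Quillen for the slice model structures: it preserves cofibrations because monomorphisms are pullback-stable, and it preserves acyclic cofibrations by right properness of $\SSET$, as the pullback of a weak equivalence along the Kan fibration $f$ is again a weak equivalence. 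Its right adjoint $f_*$ is therefore right Quillen and preserves fibrant objects; since $g$ is fibrant in $\Sl{\SSET}{A}$, the object $f_*g$ is fibrant in $\Sl{\SSET}{I}$, i.e. a Kan fibration over $I$, whence $f_*g \in \mathcal{U}_i$.

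For \textbf{(U8)} — equivalently, by \cref{lem:genericity-from-realignment}, for \textbf{(U5)} and \textbf{(U8)} together — I would first build a generic map for $\mathcal{U}_i$ out of the generic map $\pi_i$ of $\mathcal{V}_i$. Let $\Mor|>->|{B_i^{\mathsf f}}{\Cod{\pi_i}}$ be the subobject through which a morphism $\Mor{Y}{\Cod{\pi_i}}$ factors precisely when the pullback of $\pi_i$ along it is a Kan fibration; this really is a subpresheaf, since being a Kan fibration is stable under pullback and can be detected on restrictions to representable bases. Let $\pi_i^{\mathsf f}$ be the pullback of $\pi_i$ along $B_i^{\mathsf f}\rightarrowtail\Cod{\pi_i}$: it is relatively $\kappa_i$-compact by \textbf{(U1)} for $\mathcal{V}_i$ and is a Kan fibration by construction of $B_i^{\mathsf f}$, so $\pi_i^{\mathsf f}\in\mathcal{U}_i$; moreover every $f\in\mathcal{U}_i$ admits a cartesian map to $\pi_i^{\mathsf f}$, because its $\pi_i$-classifying map factors through $B_i^{\mathsf f}$.

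It remains to solve realignment problems for $\prn{\mathcal{U}_i,\pi_i^{\mathsf f}}$. Given a cartesian monomorphism $\Mor|>->|{h}{f}$ lying over a monomorphism with $f\in\mathcal{U}_i$ and a cartesian map $\Mor{h}{\pi_i^{\mathsf f}}$, I would postcompose with the cartesian monomorphism $\Mor|>->|{\pi_i^{\mathsf f}}{\pi_i}$ to obtain a realignment problem for $\prn{\mathcal{V}_i,\pi_i}$, which is solvable since $\mathcal{V}_i$ satisfies \textbf{(U8)} for all monomorphisms and $f\in\mathcal{V}_i$. The resulting cartesian map $\Mor{f}{\pi_i}$ has base the classifying map of the Kan fibration $f$, hence factors through $B_i^{\mathsf f}$ and so through $\pi_i^{\mathsf f}$, giving a cartesian $\Mor{f}{\pi_i^{\mathsf f}}$; and this restricts to the given $\Mor{h}{\pi_i^{\mathsf f}}$, since postcomposing either candidate restriction with the monomorphism $\Mor|>->|{\pi_i^{\mathsf f}}{\pi_i}$ of $\ArrCat{\SSET}$ yields the same map. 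I expect this transfer step to be the only point requiring care — checking that $B_i^{\mathsf f}$ is an honest subobject of $\Cod{\pi_i}$, and that the factored solution genuinely respects the prescribed boundary condition — while everything else reduces to formal manipulations once realignment for $\mathcal{V}_i$ is available, which is exactly the leverage that the main theorem supplies.
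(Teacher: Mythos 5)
Your proposal is correct and follows essentially the same route as the paper: you define the subobject $B_i^{\mathsf f} \rightarrowtail \Cod{\pi_i}$ of "Kan codes" (the paper's $U_{\mathcal{U}_i}$), restrict $\pi_i$ to get $\pi_i^{\mathsf f}$, and solve realignment problems by postcomposing with $\pi_i^{\mathsf f} \rightarrowtail \pi_i$, solving in $\mathcal{V}_i$, and observing that the solution's base factors back through $B_i^{\mathsf f}$ because the classified family is a Kan fibration. Two small points of divergence: for \textbf{(U4)} you spell out the left/right Quillen argument that the paper compresses to "right properness," and for the verification that $\pi_i^{\mathsf f}$ is itself a Kan fibration you appeal directly to the pointwise definition of $B_i^{\mathsf f}$ rather than drawing the explicit horn-lifting diagram as the paper does — both are fine, since a map of simplicial sets is a Kan fibration iff its pullbacks over all simplices are.
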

\begin{proof}
  \textbf{(U1,3)} follow immediately from the fact that $\mathcal{V}_i$
  satisfies \textbf{(U1,3)} and that any right-orthogonal class is closed under
  composition and pullback. \textbf{(U4)} is an immediate consequence of the
  right-properness of the Kan-Quillen model structure.

  To show that $\mathcal{U}_i$ satisfies \textbf{(U8)}, we being by fixing a
  generic family
  $\Mor[\pi\Sub{\mathcal{V}_i}]{E\Sub{\mathcal{V}_i}}{U\Sub{\mathcal{V}_i}}$ for
  $\mathcal{V}_i$ and defining the following restriction of
  $U\Sub{\mathcal{V}_i}$:
  \[
    U\Sub{\mathcal{U}_i} =
    \Compr{
      X : U\Sub{\mathcal{V}_i}
    }{
      X \text{ is a Kan complex}
    }
  \]

  More precisely, a point $\Mor[\alpha]{\Simplex{n}}{U\Sub{\mathcal{V}_i}}$
  factors through $U\Sub{\mathcal{U}_i}$ if $\pi^*\prn{\alpha}$ is a Kan
  fibration. This is a well-defined simplicial set because Kan fibrations are stable
  under pullback.
  We define $\pi\Sub{\mathcal{U}_i}$ (resp.\ $E\Sub{\mathcal{U}_i}$) as the
  restriction of $\pi\Sub{\mathcal{V}_i}$ (resp.\ $E\Sub{\mathcal{V}_i}$) to
  $U\Sub{\mathcal{U}_i}$. We first prove that
  $\pi\Sub{\mathcal{U}_i} \in \mathcal{U}_i$, and then verify
  \textbf{(U8)}.

  By \textbf{(U1)} we conclude that $\pi\Sub{\mathcal{U}_i}$ lies in $\mathcal{V}_i$, and it is moreover
  a Kan fibration almost by definition. Fix a commutative diagram of the
  following shape:
  \[
    \DiagramSquare{
      ne = E\Sub{\mathcal{U}_i},
      se = U\Sub{\mathcal{U}_i},
      nw = \Horn{n}{i},
      sw = \Simplex{n},
      south = \alpha,
      east = \pi\Sub{\mathcal{U}_i},
      east/style = fibration,
      west/style = >->,
    }
  \]

  By definition of $\pi\Sub{\mathcal{U}_i}$, pulling back along $\alpha$ yields
  a Kan fibration, whereby we obtain the necessary lift:
  \[
    \begin{tikzpicture}[diagram]
      \SpliceDiagramSquare{
        width = 3cm,
        ne = E\Sub{\mathcal{U}_i},
        se = U\Sub{\mathcal{U}_i},
        nw = E\Sub{\mathcal{U}_i} \times\Sub{U\Sub{\mathcal{U}_i}} \Simplex{n},
        sw = \Simplex{n},
        nw/style = pullback,
        east/style = fibration,
        west/style = fibration,
      }
      \node[left = 3.25cm of nw] (Horn) {$\Horn{n}{i}$};
      \node[left = 3.25cm of sw] (Simplex) {$\Simplex{n}$};
      \draw[>->] (Horn) to (Simplex);
      \draw[double] (Simplex) to (sw);
      \draw[->, exists] (Horn) to (nw);
      \draw[->, exists] (Horn) to node[upright desc] {$\exists !$} (nw);
      \draw[->, exists] (Simplex) to (nw);
    \end{tikzpicture}
  \]

  Consequently, $\pi\Sub{\mathcal{U}_i} \in \mathcal{U}_i$.
  It remains to show that $\pi\Sub{\mathcal{U}_i}$ satisfies
  \textbf{(U8)}. Accordingly, fix a pair of cartesian squares
  $\Mor[\alpha]{f}{\pi\Sub{\mathcal{U}_i}}$ and $\Mor|>->|[i]{f}{g}$. We apply
  \textbf{(U8)} for $\mathcal{V}_i$ to obtain a cartesian square
  $\Mor[\beta]{g}{\pi\Sub{\mathcal{V}_i}}$ fitting into the following
  commutative diagram:
  \[
    \begin{tikzpicture}[diagram]
      \node (Horn) {$f$};
      \node[below = of Horn] (Simplex) {$g$};
      \node[right = of Horn] (U) {$\pi\Sub{\mathcal{U}_0}$};
      \node[right = of U] (V) {$\pi\Sub{\mathcal{V}_0}$};
      \path[->] (Horn) edge node[above] {$\alpha$} (U);
      \path[>->] (Horn) edge (Simplex);
      \path[>->] (U) edge (V);
      \path[->,exists] (Simplex) edge node[sloped,below] {$\beta$} (V);
    \end{tikzpicture}
  \]

  To complete the proof, it suffices to show that $\beta$ factors through
  $\pi\Sub{\mathcal{U}_i}$ \ie{} that for any cartesian square $\Mor{h}{g}$ such
  that $h$ has a representable base, $h$ is a Kan fibration. This, however,
  follows immediately because $g$ is a Kan fibration.
\end{proof}

We recall a purely homotopy-theoretic fact, referred to by
\citet{awodey:2021:qms} as the \emph{fibration extension property}.
\begin{lemma}
  \label[lemma]{lem:extend-fibration}
  Given a Kan fibration $\FibMor[f]{X}{A}$ and a trivial cofibration
  $\Mor|>->|[i]{A}{B}$, there is a Kan fibration $\FibMor[g]{Y}{B}$ such that
  $i^*g = f$. Additionally, if $f\in\mathcal{V}_i$ then $g\in\mathcal{V}_i$.
\end{lemma}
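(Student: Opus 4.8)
The plan is to separate a purely homotopy-theoretic core from the bookkeeping needed to make the restriction strict and to control smallness. For the bare existence statement I would recall the classical fibration extension property for the Kan--Quillen model structure on $\SSET$, or give a self-contained argument by saturation in the style of \cref{sec:saturation}. Consider the class of monomorphisms $i$ with the property that every Kan fibration over the domain of $i$ extends to a Kan fibration over the codomain of $i$ restricting to it along $i$. One checks that this class contains the generating anodyne maps $\Horn{n}{k}\hookrightarrow\Simplex{n}$ and is closed under pushout, transfinite composition, and retracts, so that it contains every anodyne map, in particular $i$. The closure properties use that pushouts along monomorphisms in $\SSET$ are van Kampen (adhesivity) together with \cref{lem:colim-in-cart-arr,lem:filtered-colimit-descent,lem:good-colim-preserves-monos}, which let us glue Kan fibrations along monomorphisms and along filtered colimits; that a glued map is again a Kan fibration and restricts correctly is verified horn by horn, using that every simplex of such a pushout already lies in one of the two summands, so that a horn-filling problem over the pushout is transported to one of the two pieces. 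This is the same mechanism used to saturate the class of realignable monomorphisms in \cref{sec:saturation}.

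The base case $\Horn{n}{k}\hookrightarrow\Simplex{n}$ is the substantive point and is the heart of the classical fibration extension property, which I would recall from Gabriel--Zisman (see also \citet{kapulkin-lumsdaine:2021,awodey:2021:qms}): its proof passes through the theory of minimal fibrations --- replacing a Kan fibration over the contractible horn by a fiberwise-equivalent minimal one, extending the minimal fibration over $\Simplex{n}$, and re-thickening back to a Kan fibration over $\Simplex{n}$ that restricts to the original fibration. I expect the interaction of this re-thickening with the on-the-nose restriction condition, and with keeping the extension fiberwise small, to be the main obstacle; alternatively one can be less rigid here and only insist on a fiberwise isomorphism $i^*g_0\cong f$, deferring strictness to the next step.

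It then remains to prove the refinement. When $f\in\mathcal{V}_i$ the construction above can be carried out entirely within $\mathcal{V}_i$: every operation used --- pullback, passage to a minimal sub-fibration (a subobject of a relatively $\kappa_i$-compact map), the van Kampen gluings, and the transfinite composites --- preserves relative $\kappa_i$-compactness by \cref{lem:colim-rel-pres,lem:cls-kappa-satisfies-descent}, exactly as in \cref{sec:soa}, so $g$ may be taken in $\mathcal{V}_i$ (indeed in $\mathcal{U}_i$, being a Kan fibration). If one instead prefers to keep the homotopy-theoretic argument as a black box producing only $g_0$ with a fiberwise isomorphism $i^*g_0\cong f$, one can recover both strictness and smallness at once by realignment: as just shown $\mathcal{U}_i$ satisfies \textbf{(U8)} and hence \textbf{(U5)} (\cref{lem:genericity-from-realignment}), so $f$ is classified by a map $\chi_f\colon A\to U_{\mathcal{U}_i}$ with $\chi_f^*\pi_{\mathcal{U}_i}=f$; feeding the monomorphism $i\colon A\rightarrowtail B$, the family $g_0$, and the cartesian square obtained by composing $i^*g_0\cong f$ with a classifying square for $f$ into \textbf{(U8)} for $\mathcal{U}_i$ extends it to a classifying map $\psi\colon B\to U_{\mathcal{U}_i}$ with $\psi\circ i=\chi_f$, and then $g\coloneqq\psi^*\pi_{\mathcal{U}_i}$ lies in $\mathcal{U}_i\subseteq\mathcal{V}_i$ and satisfies $i^*g=(\psi\circ i)^*\pi_{\mathcal{U}_i}=\chi_f^*\pi_{\mathcal{U}_i}=f$. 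For an $f$ not assumed small one runs the same rigidification against the larger universe $\mathcal{V}_1$, every simplicial set arising in the intended application being $\mathcal{V}_1$-small, or simply returns $g_0$ with its isomorphism when strict equality is not required. The one real gap to close is the minimal-fibration base case, and in particular whether its output can be chosen fiberwise-small and compatibly with restriction --- the obstacle flagged above.
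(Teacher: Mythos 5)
The paper offers no proof of this lemma at all; it simply cites Kapulkin--Lumsdaine--Voevodsky (via Quillen's minimal fibrations) and Kerodon Tag 00ZS (via Kan's $\mathsf{Ex}_\infty$ functor) for the fibration extension property, including its smallness refinement. Your proposal instead builds scaffolding around a citation for the base case: a saturation argument reducing to the generating horn inclusions $\Horn{n}{k}\hookrightarrow\Simplex{n}$, the citation for that base case, and a realignment step to rigidify the restriction and control smallness. This is sound in outline --- the pushout and transfinite-composition closure do work as you sketch, since any map $\Simplex{n}\to B\sqcup_A C$ with $A\rightarrowtail B$ mono factors through $B$ or through $C$, and likewise maps out of $\Simplex{n}$ into a filtered colimit factor through a stage, so horn-filling problems localize --- but it is more elaborate than the situation requires. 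The sources the paper cites already establish the statement for an \emph{arbitrary} anodyne map $i$ (not only the generators), with a strictly cartesian restriction square, and their constructions (passage to a minimal subfibration, fiber-bundle trivialization over a contractible base, re-thickening) are manifestly built from subobjects, products, and pullbacks of the given data and hence stay inside $\mathcal{V}_i$; so the reduction to generators and the subsequent realignment are both dispensable. Your realignment observation is nevertheless a useful fallback worth recording: were one handed only a weak form of the extension property --- a fibration $g_0\in\mathcal{U}_i$ over $B$ together with an isomorphism $i^*g_0\cong f$ rather than an equality --- then \textbf{(U8)} for $\mathcal{U}_i$ (established just before this lemma via \textbf{(U8)} for $\mathcal{V}_i$, so there is no circularity) does indeed upgrade it to a $g$ with $i^*g=f$ on the nose, exactly as you compute. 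The ``real gap'' you flag --- smallness and strictness in the minimal-fibrations base case --- is genuine in the sense that it is what one must check in the cited reference, but it is not an obstacle there, and one need not insert the extra saturation and realignment layers to close it.
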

This result is proved by Kapulkin, Lumsdaine, and Voevodsky~\citep{kapulkin-lumsdaine:2021} using Quillen's theory
of \emph{minimal fibrations}. An alternative approach is given by
\citet[\href{https://kerodon.net/tag/00ZS}{Tag 00ZS}]{kerodon} using Kan's
$\mathsf{Ex}_\infty$ functor. A near immediate consequence of \cref{lem:extend-fibration} and \textbf{(U8)} is the fibrancy of the $U\Sub{\mathcal{U}_0}$:

\begin{theorem}
  \label[theorem]{thm:univ-is-fibrant}
  The object $U\Sub{\mathcal{U}_0}$ lies within $\mathcal{U}_1$.
\end{theorem}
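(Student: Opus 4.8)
The plan is to check the two conditions defining membership in $\mathcal{U}_1$ for the map $U\Sub{\mathcal{U}_0}\to 1$: that it is a Kan fibration, equivalently that $U\Sub{\mathcal{U}_0}$ is a Kan complex, and that it lies in $\mathcal{V}_1$, equivalently that $U\Sub{\mathcal{U}_0}$ is relatively $\kappa_1$-compact over the point. The latter is the routine half. In \cref{sec:hierarchy} we arranged (\cref{cor:hierarchy}) that the base $U\Sub{\mathcal{V}_0}$ of the chosen generic map for $\mathcal{V}_0$ is $\kappa_1$-compact; since $U\Sub{\mathcal{U}_0}$ is a subobject of $U\Sub{\mathcal{V}_0}$ and $\kappa_1>\aleph_0=\verts{\Delta}$, \cref{lem:small-iff-compact} identifies $\kappa_1$-compactness of a simplicial set with being valued in $\kappa_1$-small sets, and a levelwise subset of such a presheaf has the same property; closure of $\kappa_1$-compact objects under finite products then upgrades this to relative $\kappa_1$-compactness over the point.

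For fibrancy I would solve the horn-filling problem directly. Fix a horn inclusion $\Horn{n}{k}\hookrightarrow\Simplex{n}$ together with a map $\Horn{n}{k}\to U\Sub{\mathcal{U}_0}$. Post-composing with the inclusion $U\Sub{\mathcal{U}_0}\hookrightarrow U\Sub{\mathcal{V}_0}$ and pulling back $\pi\Sub{\mathcal{V}_0}$ produces, by the very construction of $U\Sub{\mathcal{U}_0}$, a Kan fibration $f$ over $\Horn{n}{k}$ lying in $\mathcal{V}_0$, presented by a cartesian square $f\to\pi\Sub{\mathcal{V}_0}$. Horn inclusions are trivial cofibrations, so the fibration extension property (\cref{lem:extend-fibration}) yields a Kan fibration $g$ over $\Simplex{n}$ with $i^*g=f$, and its refinement keeps $g$ inside $\mathcal{V}_0$. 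Now apply realignment \textbf{(U8)} for $\mathcal{V}_0$: the monomorphism $\Horn{n}{k}\hookrightarrow\Simplex{n}$, the family $g\in\mathcal{V}_0$ over $\Simplex{n}$, and the cartesian square presenting $i^*g=f$ over the horn assemble into a realignment problem, whose solution is a cartesian square $g\to\pi\Sub{\mathcal{V}_0}$ strictly extending the given one. Its base $\Simplex{n}\to U\Sub{\mathcal{V}_0}$ factors through $U\Sub{\mathcal{U}_0}$ because $g$ is a Kan fibration, and by the boundary condition of \textbf{(U8)} it restricts along the horn inclusion to the map we started with; this is the required filler, so $U\Sub{\mathcal{U}_0}$ is fibrant.

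The crux, and the only step that uses anything beyond routine simplicial homotopy theory, is the strict-extension move in the second paragraph: bare genericity of $\pi\Sub{\mathcal{V}_0}$ would only classify $g$ by a square agreeing with the given one over $\Horn{n}{k}$ up to isomorphism, which does not yield a well-defined filler $\Simplex{n}\to U\Sub{\mathcal{U}_0}$; it is exactly realignment \textbf{(U8)} for $\mathcal{V}_0$ --- hence the strict universe of \cref{sec:universe} --- that repairs this. A minor point to track is $\kappa_0$-smallness through \cref{lem:extend-fibration}, which is why we invoke the version that keeps the extended fibration in $\mathcal{V}_0$ rather than merely in $\SSET$.
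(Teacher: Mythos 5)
Your proof is correct and takes essentially the same route as the paper: membership in $\mathcal{V}_1$ from smallness of the base $U\Sub{\mathcal{V}_0}$, and fibrancy by pulling back the generic family over the horn, extending it along the horn inclusion via the fibration extension property (\cref{lem:extend-fibration}), and then realigning the classifying square to get a strict filler. The only cosmetic difference is that you apply \textbf{(U8)} for $\mathcal{V}_0$ and verify by hand that the realigned classifying map factors through $U\Sub{\mathcal{U}_0}$, whereas the paper invokes \textbf{(U8)} for $\mathcal{U}_0$ directly, having established it in the preceding lemma by exactly that factorization argument.
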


\begin{proof}
  As a subobject of $\mathcal{V}_0$, \textbf{(U2)} implies that
  $U\Sub{\mathcal{U}_0}$ lies within $\mathcal{V}_1$, so it suffices to show
  that $U\Sub{\mathcal{U}_0}$ is a Kan complex. Accordingly, we fix a lifting
  problem for $U\Sub{\mathcal{U}_0}$:
  \[
    \begin{tikzpicture}[diagram]
      \node (Horn) {$\Horn{n}{i}$};
      \node[below = of Horn] (Simplex) {$\Simplex{n}$};
      \node[right = of Horn] (U) {$U\Sub{\mathcal{U}_0}$};
      \path[->] (Horn) edge node[above] {$\alpha$} (U);
      \path[>->] (Horn) edge (Simplex);
    \end{tikzpicture}
  \]

  We must extend $\alpha$ along the inclusion
  $\Mor|>->|{\Horn{n}{i}}{\Simplex{n}}$. We begin by pulling back
  $\pi\Sub{\mathcal{U}_0}$ along $\alpha$, obtaining a Kan fibration
  $\FibMor{\brk{\alpha}}{\Horn{n}{i}}$ and a cartesian map
  $\Mor[h]{\brk{\alpha}}{\pi\Sub{\mathcal{U}_0}}$. Applying
  \cref{lem:extend-fibration}, we can extend $\brk{\alpha}$ along
  $\Mor|>->|{\Horn{n}{i}}{\Simplex{n}}$ to another Kan fibration
  $\FibMor{\brk{\beta}}{\Simplex{n}}$. Next, we apply \textbf{(U8)} to extend
  $h$ along the induced cartesian monomorphism
  $\Mor|>->|{\brk{\alpha}}{\brk{\beta}}$:
  \[
    \begin{tikzpicture}[diagram]
      \node (Horn) {$\brk{\alpha}$};
      \node[below = of Horn] (Simplex) {$\brk{\beta}$};
      \node[right = of Horn] (U) {$\pi\Sub{\mathcal{U}_0}$};
      \path[->] (Horn) edge node[above] {$h$} (U);
      \path[>->] (Horn) edge (Simplex);
      \path[->,exists] (Simplex) edge node [sloped,below] {$\beta$} (U);
    \end{tikzpicture}
  \]

  The downstairs component of $\Mor[\beta]{\brk{\beta}}{\pi\Sub{\mathcal{U}_0}}$ then solves the
  original lifting problem.
\end{proof}

Notice that in the above proof, the application of \textbf{(U8)} allows us to
rephrase a property of the generic family (``$U\Sub{\mathcal{U}_0}$ is a Kan
complex'') as a property of the class of maps $\mathcal{U}_0$ (``Kan fibrations extend along
trivial cofibrations'') to which the standard tools of homotopy theory apply. While
the setup is more complex, the same is true of the proof that
$\pi\Sub{\mathcal{U}_0}$ is univalent. Prior to discussing the proof of
univalence, we must fix a few definitions.

\begin{definition}
  Given Kan fibrations $\FibMor{E_0,E_1}{B}$, we define $\FibMor{\Equiv{E_0}{E_1}}{B}$ to be
  the fibration of weak equivalences between $E_0$ and $E_1$, \ie the subobject of the
  local exponential $\Mor{E_1^{E_0}}{B}$ spanned by weak equivalences.
\end{definition}

Explicitly, a simplex $\Mor[\alpha]{\Simplex{n}}{E_1^{E_0}}$ factors through
$\Equiv{E_0}{E_1}$ if the corresponding morphism
$\Mor{\alpha^*E_0}{\alpha^*E_1}$ over $\Simplex{n}$ is a weak equivalence. In
fact, a map $\Mor{X}{\Equiv{E_0}{E_1}}$ is determined by a pair of maps
$\Mor[f_i]{X}{B}$ along with a weak equivalence $\Mor{f_0^*E_0}{f_1^*E_1}$ over
$X$.

We have avoided a number of subtle points in this definition \eg{}, that weak
equivalences between fibrations are stable under pullback to show that it is
well-defined. These are addressed thoroughly by Kapulkin, Lumsdaine, and
Voevodsky~\citep{kapulkin-lumsdaine:2021}. See \citet{shulman:2015:elegant} for
a less analytic definition of the object of equivalences.

Given a Kan fibration $\FibMor{X}{B}$, we define
$\Mor[\gls{\partial_0,\partial_1}]{\Eq{X}}{B \times B}$ to be
$\Equiv{\pi_1^*X}{\pi_2^*X}$, \ie the object of equivalences between two specified fibers
of $X$. We observe that there is a canonical monomorphism
$\Mor|>->|[\delta_X]{B}{\Eq{X}}$ lying over the diagonal map $\Mor|>->|{B}{B
\times B}$ sending $b:B$ to the identity equivalence
$\Mor{X\brk{b}}{X\brk{b}}$:
\[
  \DiagramSquare{
    nw = B,
    sw = B,
    west/style = double,
    north/style = >->,
    south/style = >->,
    ne = \Eq{X},
    se = B\times B,
    north = \delta_X,
    south = \delta,
    east = \gls{\partial_0,\partial_1},
  }
\]

\begin{definition}
  A Kan fibration $\FibMor{X}{B}$ is called \emph{univalent} when
  $\Mor|>->|[\delta_X]{B}{\Eq{X}}$ is a trivial cofibration.
\end{definition}

We will now sketch the proof that $\pi\Sub{\mathcal{U}_0}$ is univalent.  Just
as with \cref{thm:univ-is-fibrant}, the proof decomposes into two pieces: a
homotopy-theoretic result and a careful analysis and application of
\textbf{(U8)} to parlay this result into the appropriate result on the
universe. For univalence, the relevant homotopy-theoretic fact is the
\emph{equivalence extension property}, apparently first isolated by Kapulkin, Lumsdaine, and Voevodsky~\citep{kapulkin-lumsdaine:2021}, named by Awodey,
and further developed by several authors including Awodey, Coquand, Sattler, and Shulman~\citep{shulman:2015:elegant,shulman:2019,sattler:2017,awodey:2021:qms,cchm:2017}.

\subsubsection{Lemma (Equivalence Extension Property)}\label[lemma]{lem:eep}
\begingroup\itshape
  We consider a diagram of the following shape, in which the downward maps are Kan fibrations, $\Mor|>->|[i]{A}{B}$ is a cofibration, and $\Mor[w]{X}{i^*Y}$ is a weak equivalence:
  \begin{equation}\label[diagram]{diag:eep:0}
    \begin{tikzpicture}[diagram,baseline=(B.base)]
      \node (X) {$X$};
      \node[below right = 0.75cm and 2cm of X] (iY) {$i^*Y$};
      \node[below right = 2.5cm and 1cm of X] (A) {$A$};
      \node[right = 4cm of A] (B) {$B$};
      \node[right = 4cm of iY] (Y) {$Y$};
      \path[>->] (A) edge node[below] {$i$} (B);
      \path[->] (X) edge node[upright desc] {$w$} (iY);
      \path[fibration] (X) edge (A);
      \path[fibration] (iY) edge (A);
      \path[fibration] (Y) edge (B);
      \path[>->] (iY) edge (Y);
    \end{tikzpicture}
  \end{equation}

  Then \cref{diag:eep:0} can be extended to a diagram of the following shape,
  in which $\Mor[\bar{w}]{\bar{X}}{Y}$ is a weak equivalence and
  $\Mor|fibration|{\bar{X}}{B}$ is a fibration, and all three squares are cartesian:
  \[
    \begin{tikzpicture}[diagram]
      \node (X) {$X$};
      \node[below right = 0.75cm and 2cm of X] (iY) {$i^*Y$};
      \node[below right = 2.5cm and 1cm of X] (A) {$A$};
      \node[right = 4cm of X] (Xext) {$\bar{X}$};
      \node[right = 4cm of A] (B) {$B$};
      \node[right = 4cm of iY] (Y) {$Y$};
      \path[>->] (A) edge node[below] {$i$} (B);
      \path[->] (X) edge node[upright desc] {$w$} (iY);
      \path[fibration] (X) edge (A);
      \path[fibration] (iY) edge (A);
      \path[fibration] (Y) edge (B);
      \path[>->] (iY) edge (Y);
      \path[fibration,color=LightGray] (Xext) edge (B);
      \path[->] (Xext) edge node[upright desc] {$\bar{w}$} (Y);
      \path[->] (X) edge (Xext);
    \end{tikzpicture}
  \]

  Moreover, if $\FibMor{X}{A}$ and $\FibMor{Y}{B}$ both belong to
  $\mathcal{U}_0$, so does $\FibMor{\bar{X}}{B}$.
\endgroup

\begin{theorem}
  The family $\Mor[\pi\Sub{\mathcal{U}_0}]{E\Sub{\mathcal{U}_0}}{U\Sub{\mathcal{U}_0}}$ is univalent.
\end{theorem}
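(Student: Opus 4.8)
The plan is to verify directly that the diagonal $\Mor|>->|[\delta]{U\Sub{\mathcal{U}_0}}{\Eq{\pi\Sub{\mathcal{U}_0}}}$ is a trivial cofibration. It is a monomorphism --- hence a cofibration --- because its composite with $\Mor[\gls{\partial_0,\partial_1}]{\Eq{\pi\Sub{\mathcal{U}_0}}}{U\Sub{\mathcal{U}_0}\times U\Sub{\mathcal{U}_0}}$ is the diagonal of $U\Sub{\mathcal{U}_0}$, which is monic; so it suffices to prove that $\delta$ is a weak equivalence. For $\epsilon\in\{0,1\}$ the projection $\Mor[\partial_\epsilon]{\Eq{\pi\Sub{\mathcal{U}_0}}}{U\Sub{\mathcal{U}_0}}$ is a Kan fibration: the map $\gls{\partial_0,\partial_1}$ is a fibration by construction, and $\Mor{U\Sub{\mathcal{U}_0}\times U\Sub{\mathcal{U}_0}}{U\Sub{\mathcal{U}_0}}$ is a fibration because $U\Sub{\mathcal{U}_0}$ is a Kan complex (\cref{thm:univ-is-fibrant}). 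Since $\partial_\epsilon\circ\delta = \IdArr{U\Sub{\mathcal{U}_0}}$, two-out-of-three reduces the goal to showing that $\partial_\epsilon$ is a weak equivalence; being a Kan fibration, $\partial_\epsilon$ is a weak equivalence precisely when it has the right lifting property against every monomorphism.

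Accordingly, I would fix a monomorphism $\Mor|>->|[m]{K}{L}$ together with maps $\Mor[\ell]{L}{U\Sub{\mathcal{U}_0}}$ and $\Mor[k]{K}{\Eq{\pi\Sub{\mathcal{U}_0}}}$ with $\partial_\epsilon\circ k = \ell\circ m$, and unwind the data. The map $\ell$ classifies a Kan fibration $\FibMor{Y}{L}$ lying in $\mathcal{U}_0$, while $k$ supplies in addition a Kan fibration $\FibMor{X}{K}$ in $\mathcal{U}_0$ (classified by the other projection of $k$) together with a weak equivalence over $K$ between $X$ and the restriction $m^*Y$. Choosing $\epsilon$ so that the variances agree, this is exactly the input to the Equivalence Extension Property (\cref{lem:eep}) for the cofibration $m$: from it I obtain a Kan fibration $\FibMor{\bar X}{L}$ with $m^*\bar X = X$, a weak equivalence between $\bar X$ and $Y$ restricting over $K$ to the given one, all pertinent squares cartesian, and --- crucially, since both inputs lie in $\mathcal{U}_0$ --- the fibration $\bar X\to L$ again in $\mathcal{U}_0$. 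The datum consisting of $Y$, $\bar X$ and the extended equivalence then determines a map $\Mor[j]{L}{\Eq{\pi\Sub{\mathcal{U}_0}}}$ with $\partial_\epsilon\circ j = \ell$.

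The main obstacle is to guarantee that $j\circ m = k$ holds \emph{strictly}, not merely up to the cartesian squares furnished by \cref{lem:eep}: the classifying map $\Mor{L}{U\Sub{\mathcal{U}_0}}$ of $\bar X$ is a priori determined only up to isomorphism, so its restriction along $m$ need not coincide with the classifying map of $X$ recorded in $k$. This is precisely what realignment is for, and I would invoke \textbf{(U8)} for $\mathcal{U}_0$ along the monomorphism $m$ to replace the classifying map of $\bar X$ by one that strictly extends that of $X$; after this rigidification $j\circ m = k$ on the nose. (A secondary bookkeeping point is to line up the direction of the equivalence produced by \cref{lem:eep} with the variance built into $\Eq{-}$, which is why one of the two projections $\partial_0,\partial_1$ is preferable to the other in the previous step.) With the lift constructed, $\partial_\epsilon$ has the right lifting property against all monomorphisms, hence is a weak equivalence; by the reduction above $\delta$ is a weak equivalence, therefore a trivial cofibration, and so $\pi\Sub{\mathcal{U}_0}$ is univalent.
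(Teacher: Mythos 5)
Your proposal is correct and follows essentially the same route as the paper: exhibit $\delta$ as a section of the fibration $\partial_1$, reduce by two-out-of-three to showing $\partial_1$ is a trivial fibration, solve the lifting problem against an arbitrary cofibration by the Equivalence Extension Property, and then use \textbf{(U8)} to realign the classifying map of the extended fibration so the lift restricts strictly to the given data. Your extra remarks (why $\partial_\epsilon$ is a Kan fibration, and the variance bookkeeping that singles out $\partial_1$) are accurate details the paper leaves implicit.
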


\begin{proof}
  Unfolding definitions, we must show that
  $\Mor|>->|[\delta\Sub{E\Sub{\mathcal{U}_0}}]{U\Sub{\mathcal{U}_0}}{\Eq{E\Sub{\mathcal{U}_0}}}$
  is a trivial cofibration; as it is already a cofibration, it is enough to
  check that it is a weak equivalence. Consider \cref{diag:univalence:0} below
  exhibiting $\delta\Sub{E\Sub{\mathcal{U}_0}}$ as a section of the fibration
  $\FibMor[\partial_1]{\Eq{E\Sub{\mathcal{U}_0}}}{U\Sub{\mathcal{U}_0}}$:
  \begin{equation}\label[diagram]{diag:univalence:0}
    \begin{tikzpicture}[diagram,baseline=(se.base)]
      \node (nw) {$U\Sub{\mathcal{U}_0}$};
      \node (ne) [right = 2.5cm of nw] {$\Eq{E\Sub{\mathcal{U}_0}}$};
      \node (se) [below = of ne] {$U\Sub{\mathcal{U}_0}$};
      \draw[double] (nw) to (se);
      \draw[>->] (nw) to node [above] {$\delta\Sub{E\Sub{\mathcal{U}_0}}$} (ne);
      \draw[fibration] (ne) to node [right] {$\partial_1$} (se);
    \end{tikzpicture}
  \end{equation}

  By the 2-out-of-3 property of weak equivalances, it therefore suffices to show that fibration
  $\Mor|fibration|[\partial_1]{\Eq{E\Sub{\mathcal{U}_0}}}{U\Sub{\mathcal{U}_0}}$
  is a \emph{trivial} fibration.
  To this end we fix a cofibration
  $\Mor|>->|{A}{B}$ to check the right lifting property for $\partial_1$:
  \begin{equation}\label[diagram]{diag:univalence:1}
    \DiagramSquare{
      width = 3.5cm,
      nw = A,
      sw = B,
      ne = \Eq{E\Sub{\mathcal{U}_0}},
      se = U\Sub{\mathcal{U}_0},
      south = \bar{\alpha},
      north = \prn{\beta, \alpha, w},
      east = \partial_1,
      west/style = >->,
      east/style = fibration,
    }
  \end{equation}

  In \cref{diag:univalence:1} above, we have written $\beta,\alpha$ for the two
  codes $\Mor{A}{U\Sub{\mathcal{U}_0}}$ and
  $\Mor[w]{\brk{\beta}}{\brk{\alpha}}$ for the weak equivalence between the
  corresponding fibers of $\pi\Sub{\mathcal{U}_0}$, writing $\brk{\alpha}$ for
  the pullback of $\pi\Sub{\mathcal{U}_0}$ along $\alpha$, \etc; then
  $\bar\alpha$ is an \emph{extension} of the code $\alpha$ along the
  cofibration $\Mor|>->|{A}{B}$. Our goal is to provide similar extensions
  of $\beta,w$ to produce an equivalence between $B$-valued fibers of
  $\pi\Sub{\mathcal{U}_0}$.
  Considering the fiber of
  $\pi\Sub{\mathcal{U}_0}$ at $\bar\alpha$, we have a Kan fibration
  $\FibMor{\brk{\bar\alpha}}{B}$ whose pullback along $\Mor|>->|{A}{B}$ is
  $\FibMor{\brk{\alpha}}{A}$. We summarize the situation as follows:
  \begin{equation}
    \begin{tikzpicture}[diagram,baseline=(A.base)]\label[diagram]{diag:univalence:2}
      \node (X) {$\brk{\beta}$};
      \node[below right = 0.75cm and 2cm of X] (iY) {$\brk{\alpha}$};
      \node[below right = 2.5cm and 1cm of X] (A) {$A$};
      \node[right = 4cm of A] (B) {$B$};
      \node[right = 4cm of iY] (Y) {$\brk{\bar\alpha}$};
      \draw[>->] (A) to (B);
      \draw[->] (X) to node[upright desc] {$w$} (iY);
      \draw[fibration] (X) to (A);
      \draw[fibration] (iY) to (A);
      \draw[fibration] (Y) to (B);
      \draw[>->] (iY) to node [upright desc] {$g$} (Y);
    \end{tikzpicture}
  \end{equation}

  Using \cref{lem:eep}, we can complete \cref{diag:univalence:2} as follows:
  \begin{equation}\label[diagram]{diag:univalence:3}
    \begin{tikzpicture}[diagram,baseline=(A.base)]
      \node (X) {$\brk{\beta}$};
      \node[below right = 0.75cm and 2cm of X] (iY) {$\brk{\alpha}$};
      \node[below right = 2.5cm and 1cm of X] (A) {$A$};
      \node[magenta,right = 4cm of X] (Xext) {$\brk{\bar\beta}$};
      \node[right = 4cm of A] (B) {$B$};
      \node[right = 4cm of iY] (Y) {$\brk{\bar\alpha}$};
      \draw[>->] (A) to (B);
      \draw[->] (X) to node[upright desc] {$w$} (iY);
      \draw[fibration] (X) to (A);
      \draw[fibration] (iY) to (A);
      \draw[fibration] (Y) to (B);
      \draw[>->] (iY) to node [upright desc] {$g$} (Y);
      \draw[fibration,color=LightGray] (Xext) to (B);
      \draw[exists,->,magenta] (Xext) to node[upright desc] {$\bar{w}$} (Y);
      \draw[magenta, exists,>->] (X) to node [above] {$f$} (Xext);
    \end{tikzpicture}
  \end{equation}

  By \textbf{(U8)} we solve the following realignment problem to obtain an
  extension of the code $\Mor[\beta]{A}{U\Sub{\mathcal{U}_0}}$ along
  $\Mor|>->|{A}{B}$, using the fact that $\brk{\bar\beta}$ lies in
  $\mathcal{U}_0$ by assumption:
  \begin{equation}\label[diagram]{diag:univalence:4}
    \begin{tikzpicture}[diagram]
      \node (nw) {$\brk{\beta}$};
      \node (ne) [right = of nw] {$\pi\Sub{\mathcal{U}_0}$};
      \node (sw) [below = of nw] {$\brk{\bar\beta}$};
      \draw[>->] (nw) to node [left] {$f$} (sw);
      \draw[->] (nw) to node [above] {$\beta$} (ne);
      \draw[->,exists,magenta] (sw) to node [sloped,below] {$\bar\beta$ }(ne);
    \end{tikzpicture}
  \end{equation}

  The indicated lift of \cref{diag:univalence:4} then supplies in conjunction
  with the weak equivalence $\Mor[\bar{w}]{\brk{\bar\beta}}{\brk{\bar\alpha}}$
  the required lift for \cref{diag:univalence:1}:
  \[
    \begin{tikzpicture}[diagram]
      \SpliceDiagramSquare{
        width = 4cm,
        height = 2.5cm,
        nw = A,
        sw = B,
        ne = \Eq{E\Sub{\mathcal{U}_0}},
        se = U\Sub{\mathcal{U}_0},
        south = \bar{\alpha},
        north = \prn{\beta, \alpha, w},
        east = \partial_1,
        west/style = >->,
        east/style = fibration,
      }
      \draw[exists,->,magenta] (sw) to node [desc] {$\prn{\bar\beta,\bar\alpha,\bar{w}}$} (ne);
    \end{tikzpicture}
  \]

  Therefore $\partial_1$ is a trivial fibration and thus $\pi\Sub{\mathcal{U}_0}$ is univalent.
\end{proof}

\subsection{Artin gluing and synthetic Tait computability}\label{sec:realignment-stc}

Artin gluing is used by computer scientists to prove metatheorems for type
theories and programming languages such as normalization, canonicity,
decidability, parametricity, conservativity, and computational adequacy.
\citet{sterling-harper:2021} have introduced \emph{synthetic Tait
computability}  as an abstraction for working in the internal language of
glued topoi, taking the realignment law \textbf{(U8)} in its internal
form (see \cref{sec:internal-formulations}) as a basic axiom.

\subsubsection{History and motivation}

Synthetic Tait computability (or \textbf{STC}) was first employed in \opcit to
prove a generalized abstraction/parametricity theorem for a language of
software packages (``modules'') in the style of Standard ML; subsequently,
\citet{sterling-angiuli:2021} used STC to positively resolve the long-standing
\emph{normalization conjecture} for cubical type
theory~\citep{abcfhl:2021}.\footnote{See also Sterling's
dissertation~\citep{sterling:2021:thesis} for a more detailed treatment of both this result and synthetic Tait computability in general.}
Building on these results, \citet{gratzer:2022:lics} adapted STC to
verify the analogous conjecture for \emph{multimodal type
theory}~\citep{gratzer-kavvos-nuyts-birkedal:2020}. In their original formulation, all of these results relied
heavily on \textbf{(U8)}, but the glued topoi in the cited results were all of
presheaf type and hence the presheaf-theoretic universes of
\citet{hofmann-streicher:1997} could be brought to bear without broaching the
question of strict universes in sheaf topoi.

More recently, synthetic Tait computability has been employed in scenarios
where the glued topos is not known to be of presheaf type. For example,
\citet{gratzer-birkedal:2022} proved a canonicity result for a version of
\emph{guarded dependent type theory} for which the necessary instance of STC
involved a Grothendieck topology. It has therefore become a matter
of some urgency to verify the existence of universes satisfying \textbf{(U1-8)}
in arbitrary Grothendieck topoi.

\subsubsection{Universes in Artin gluings}

Let $\Mor[F]{\ECat}{\FCat}$ be a left exact functor between topoi such that
$\ECat$ carries the structure of a model of Martin-L\"of type theory, \ie a
pre-universe $\mathcal{T}$ in the sense of \cref{def:pre-universe}. Write
$\GCat\coloneqq\FCat\downarrow F$ for the Artin gluing of $F$, and let
$\Mor|open immersion|[j]{\ECat}{\GCat}$ be the corresponding open immersion of
topoi. Fixing a universe $\Cls$ in $\GCat$ (\ie a class of maps satisfying
\textbf{(U1--7)}) that contains $j_*\mathcal{T}$, we may define a new
pre-universe $\mathcal{U}$ consisting of the subclass of $\Cls$ spanned by maps
$f$ with $j^*f\in\mathcal{T}$.

We wish to verify that $\mathcal{U}$ likewise carries the structure of a model
of Martin-L\"of type theory in the same sense of satisfying \textbf{(U1,3--5)};
results of this kind are used to prove important syntactic metatheorems for
type theories, such as canonicity (a type theoretic analogue to the existence
property), normalization, decidability of judgmental equality, and
conservativity.

\begin{lemma}
  The class of maps $\mathcal{U}\subseteq\Hom[\GCat]$ satisfies \textbf{(U1,3,4)}.
\end{lemma}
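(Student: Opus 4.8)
The plan is to show closure under \textbf{(U1)}, \textbf{(U3)}, and \textbf{(U4)} by reducing each property to the corresponding closure property of the ambient universe $\Cls$ and the pre-universe $\mathcal{T}$, using the fact that $j^*$ preserves the relevant structure. The key observation throughout is that $\mathcal{U}$ is defined by a pullback-style condition: $f \in \mathcal{U}$ iff $f \in \Cls$ \emph{and} $j^*f \in \mathcal{T}$. So for each operation I need to know that (a) $\Cls$ is closed under that operation (given, since $\Cls$ satisfies \textbf{(U1--7)}), and (b) the operation commutes with $j^*$ in a way that lets me conclude $j^*(\text{result}) \in \mathcal{T}$ from $j^*f, j^*g \in \mathcal{T}$.

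First, for \textbf{(U1)} (pullback-stability): given $f \in \mathcal{U}$ and a cartesian square exhibiting $g$ as a pullback of $f$, I note $g \in \Cls$ by \textbf{(U1)} for $\Cls$. Since $j^*$ is a left exact functor (it is the inverse image of a geometric morphism), it preserves pullbacks, so $j^*g$ is a pullback of $j^*f \in \mathcal{T}$; then $j^*g \in \mathcal{T}$ by \textbf{(U1)} for $\mathcal{T}$. Hence $g \in \mathcal{U}$. Next, for \textbf{(U3)} (closure under composition): given composable $f, g \in \mathcal{U}$, the composite lies in $\Cls$ by \textbf{(U3)} for $\Cls$, and $j^*(g \circ f) = j^*g \circ j^*f$ since $j^*$ is a functor, so this composite lies in $\mathcal{T}$ by \textbf{(U3)} for $\mathcal{T}$; hence $g \circ f \in \mathcal{U}$.

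The subtle case is \textbf{(U4)} (closure under dependent product), because $j^*$ is only \emph{left} exact and dependent products are a right-adjoint construction, so it is not automatic that $j^*$ commutes with pushforward $f_*$. The plan here is to use the fact that for an \emph{open} immersion $j$, the functor $j^*$ does in fact commute with dependent products along maps in the image of $j_*$ — more precisely, that $j^*(f_* g) \cong (j^*f)_*(j^*g)$ when we have the appropriate Beck--Chevalley situation. Concretely, since $j$ is an open immersion, $j_* j^*$ is (up to iso) given by exponentiation/restriction by the subterminal object defining the open subtopos, and this interacts well with local exponentials. Given $f \in \mathcal{U}$ with $f \colon A \to I$ and $g \in \mathcal{U}$ with $g \colon B \to A$, we have $f_* g \in \Cls$ by \textbf{(U4)} for $\Cls$; it then remains to check $j^*(f_* g) \in \mathcal{T}$. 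Using that $j^*$ preserves local exponentials of maps it preserves — equivalently, invoking the Beck--Chevalley condition for the open immersion $j$ against the pullback along $f$ — we get $j^*(f_* g) \cong (j^*f)_*(j^*g)$, and the right-hand side lies in $\mathcal{T}$ by \textbf{(U4)} for $\mathcal{T}$.

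I expect the main obstacle to be precisely the verification that $j^*$ commutes with the relevant dependent products for \textbf{(U4)}; this is where one must use that $j$ is an \emph{open} immersion (not merely any geometric morphism), since for a general $j$ the inverse image need not preserve any $\Pi$-types. The standard fact is that for an open subtopos, $j^*$ is both a left and a right adjoint's inverse image in a way that makes it preserve local exponentials, or alternatively that the relevant Beck--Chevalley square is satisfied because $j_*$ is itself a pullback functor up to equivalence. Once that commutation is in hand, the rest is the routine two-part bookkeeping sketched above — membership in $\Cls$ from its universe axioms, and membership in $\mathcal{T}$ after applying $j^*$ — and no genuine difficulty remains.
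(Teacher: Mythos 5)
Your proposal is correct and takes essentially the same route as the paper, which simply observes that $j^*$ is a \emph{logical} functor (hence preserves pullbacks, composition, and dependent products) and reduces each axiom to the corresponding axiom for $\Cls$ and $\mathcal{T}$. Your more hands-on verification for \textbf{(U4)} via Beck--Chevalley for the open immersion is precisely the content of ``$j^*$ is logical'' applied to $\Pi$-types, so the two are the same argument with different packaging.
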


\begin{proof}
  This is a straightforward consequence of the fact that $j^*$ is a logical
  functor, using the fact that $\mathcal{T}$ and $\Cls$ satisfy
  $\textbf{(U1,3,4)}$.
\end{proof}

To show that $\mathcal{U}$ is a pre-universe it remains to verify
\textbf{(U5)}, \ie show that $\mathcal{U}$ has a generic family. It will turn
out that the most elegant way to achieve this factors through an additional
assumption that $\Cls$ satisfies the realignment property \textbf{(U8)}.

\begin{construction}\label[construction]{con:gluing:generic-family}
  We begin by constructing a \emph{putative} generic family for $\mathcal{U}$
  in $\GCat$, which we will subsequently verify to be generic as an application
  of the realignment property for $\Cls$.
  Because $j_*\mathcal{T}\subseteq \Cls$, we have in particular a cartesian
  morphism $\Mor{j_*\pi\Sub{\mathcal{T}}}{\pi\Sub{\Cls}}$ in $\ArrCat{\GCat}$; restricting into the
  open subtopos, we have $\Mor{\pi\Sub{\mathcal{T}}\cong
  j^*j_*\pi\Sub{\mathcal{T}}}{j^*\pi\Sub{\Cls}}$ in $\ArrCat{\ECat}$; writing
  $\Mor[q]{U\Sub{\mathcal{T}}}{j^*U\Sub{\Cls}}$ for the base of this morphism, we
  may define the base of a putative generic family for $\mathcal{U}$ by cartesian
  lift in the gluing fibration:
  \begin{equation}
    \begin{tikzpicture}[diagram,baseline=(sq/sw.base)]
      \SpliceDiagramSquare<sq/>{
        ne = U\Sub{\Cls},
        nw = U\Sub{\mathcal{U}},
        se = j^*U\Sub{\Cls},
        sw = U\Sub{\mathcal{T}},
        nw/style = pullback,
        north/style = {exists,->},
        west/style = {lies over,exists},
        east/style = {lies over},
        north = \bar{q},
        south = q,
      }
      \node (ne) [right = of sq/ne] {$\GCat$};
      \node (se) [right = of sq/se] {$\ECat$};
      \path[fibration] (ne) edge node [upright desc] {$j^*$} (se);
    \end{tikzpicture}
  \end{equation}

  The remainder of the family is defined by pullback:
  \begin{equation}
    \begin{tikzpicture}[diagram,baseline=(sq/sw.base)]
      \SpliceDiagramSquare<sq/>{
        nw/style = pullback,
        north/style = {exists,->},
        west/style = {lies over,exists},
        east/style = {lies over},
        ne = \pi\Sub{\Cls},
        se = U\Sub{\Cls},
        sw = U\Sub{\mathcal{U}},
        nw = \pi\Sub{\mathcal{U}},
        south = \bar{q},
      }
      \node (ne) [right = of sq/ne] {$\ArrCat{\GCat}$};
      \node (se) [right = of sq/se] {$\GCat$};
      \path[fibration] (ne) edge node [upright desc] {$\Cod$} (se);
    \end{tikzpicture}
  \end{equation}
\end{construction}

\subsubsection{An abortive attempt at genericity}\label{sec:gluing:failed-genericity}

Prior to verifying that \cref{con:gluing:generic-family} gives rise to a generic
family for $\mathcal{U}$ under the assumption of realignment for $\Cls$ in \cref{sec:gluing:genericity-via-realignment} below, it is useful to understand intuitively why realignment is needed. Fixing a morphism $\Mor[f]{X}{Y} \in \mathcal{U}$, we wish to construct a cartesian map $\Mor{f}{\pi\Sub{\mathcal{U}}}$. By definition, we have $f \in \Cls$ and
$j^*f \in \mathcal{T}$, hence there exist a pair of cartesian morphisms
$\Mor[x']{f}{\pi\Sub{\Cls}}$ and
$\Mor[x_0]{j^*f}{\pi\Sub{\mathcal{T}}}$. Naively, we might hope to take
advantage of the universal property of $U\Sub{\mathcal{U}}$ \emph{qua} cartesian lift to obtain a
cartesian map $\Mor{f}{\pi\Sub{\mathcal{U}}}$:
\begin{equation}\label[diagram]{diag:gluing:failed-cart}
  \begin{tikzpicture}[diagram,baseline=(sq/sw.base)]
    \SpliceDiagramSquare<sq/>{
      ne = U\Sub{\Cls},
      nw = U\Sub{\mathcal{U}},
      se = j^*U\Sub{\Cls},
      sw = U\Sub{\mathcal{T}},
      nw/style = pullback,
      west/style = {lies over},
      east/style = {lies over},
      north = \bar{q},
      south = q,
      north/node/style = upright desc,
      width = 2.5cm,
    }
    \node (ne) [right = of sq/ne] {$\GCat$};
    \node (se) [right = of sq/se] {$\ECat$};
    \path[fibration] (ne) edge node [upright desc] {$j^*$} (se);
    \node (Pi/U) [left = 2.5cm of sq/nw] {$Y$};
    \node (Pi/T) [left = 2.5cm of sq/sw] {$j^*Y$};
    \path[lies over] (Pi/U) edge (Pi/T);
    \path[->] (Pi/T) edge node [below] {$x_0$} (sq/sw);
    \path[->,color=RedDevil,bend left=35] (Pi/U) edge node [sloped,above,color=black] {$x'$} (sq/ne);
  \end{tikzpicture}
\end{equation}

Unfortunately the configuration of \cref{diag:gluing:failed-cart} is not valid: we
do not have $j^*{x'} = q \circ x$.  If $\Cls$ satisfies \textbf{(U8)},
however, we may choose a \emph{different} upper map $\Mor{Y}{U\Sub{\Cls}}$ that
makes the analogous configuration commute.

\subsubsection{Genericity via realignment}\label{sec:gluing:genericity-via-realignment}

Now we assume that $\Cls$ satisfies the realignment axiom \textbf{(U8)}, and
continue under the same assumptions as \cref{sec:gluing:failed-genericity} to
verify that \cref{con:gluing:generic-family} exhibits a generic family for
$\mathcal{U}$.

\begin{proof}
  We will employ the following realignment in which the upper map is defined by
  adjoint transpose in $j_!\dashv j^*$, and the left-hand map is a monomorphism
  because $j_!j^*E \cong j_!\TermObj{\ECat}\times E$ by Frobenius reciprocity and
  $j_!$ preserves subterminals:
  \begin{equation}\label[diagram]{diag:pi-realignment}
    \begin{tikzpicture}[diagram,baseline=(sw.base)]
      \node (nw) {$j_!j^*f$};
      \node (sw) [below = 2.5cm of nw] {$f$};
      \node (ne) [right = 3.5cm of nw] {$\pi\Sub{\Cls}$};
      \path[>->] (nw) edge node [left] {$\epsilon$} (sw);
      \path[->] (nw) edge node [above] {$\prn{q \circ x_0}^\sharp$} (ne);
      \path[->,exists] (sw) edge node [sloped,below] {$x$} (ne);
    \end{tikzpicture}
  \end{equation}

  \emph{Remark.} To see that the upper and left-hand maps are cartesian, we recall from
  \citet[Proposition 7.7.1]{taylor:1999} that the left adjoint $j_!\dashv j^*$
  creates non-empty limits and the counit
  $\Mor[\epsilon]{j_!j^*}{\IdArr{\GCat}}$ is a cartesian natural
  transformation, \ie its naturality squares are cartesian; these facts follow
  immediately from the strictness of the initial object in the closed subtopos
  $\FCat$. Hence the transpose of a cartesian square from $\ECat$ under the
  adjunction $j_!\dashv j^*$ is a cartesian square in $\GCat$.

  It is a consequence of the commutativity of \cref{diag:pi-realignment} that $x$
  lies over $x_0$:
  \[
    j^*\prn{x}
    = j^*\prn{x \circ \epsilon} \circ \eta
    = j^*\prn{q \circ x_0}^\sharp \circ \eta
    = q \circ x_0
  \]

  We may use the base $\Mor[x]{Y}{U\Sub{\Cls}}$ of the glued morphism from
  \cref{diag:pi-realignment} to extend $x_0$ to $\mathcal{U}$ as desired,
  repairing our failed attempt from \cref{diag:gluing:failed-cart}:
  \begin{equation*}
    \begin{tikzpicture}[diagram,baseline=(sq/sw.base)]
      \SpliceDiagramSquare<sq/>{
        ne = U\Sub{\Cls},
        nw = U\Sub{\mathcal{U}},
        se = j^*U\Sub{\Cls},
        sw = U\Sub{\mathcal{T}},
        nw/style = pullback,
        west/style = {lies over},
        east/style = {lies over},
        north = \bar{q},
        south = q,
        north/node/style = upright desc,
        width = 2.5cm,
      }
      \node (ne) [right = of sq/ne] {$\GCat$};
      \node (se) [right = of sq/se] {$\ECat$};
      \draw[fibration] (ne) to node [upright desc] {$j^*$} (se);
      \node (Y) [left = 2.5cm of sq/nw] {$Y$};
      \node (jY) [left = 2.5cm of sq/sw] {$j^*Y$};
      \draw[lies over] (Y) to (jY);
      \draw[->,exists] (Y) to node [upright desc] {$\exists!$} (sq/nw);
      \draw[->] (jY) to node [below] {$x_0$} (sq/sw);
      \draw[->,color=RegalBlue,bend left=35] (Pi/U) to node [sloped,above,color=black] {$x$} (sq/ne);
    \end{tikzpicture}
    \qedhere
  \end{equation*}
\end{proof}

In fact, this construction above gives a slightly stronger result than \textbf{(U5)}.

\begin{theorem}
  \label[theorem]{thm:realign-at-syntax}
  Given $\Mor[f]{X}{Y} \in \mathcal{U}$ together with a cartesian map
  $\Mor[x_0]{j^*f}{\pi\Sub{\mathcal{T}}}$, there exists a cartesian map
  $\Mor[x]{f}{\pi\Sub{\mathcal{U}}}$ lying over $x_0$:
  \begin{equation*}
    \begin{tikzpicture}[diagram,baseline=(sw.base)]
      \SpliceDiagramSquare<sq/>{
        width = 3cm,
        nw/style = pullback,
        nw = f,
        ne = \pi\Sub{\mathcal{U}},
        sw = j^*f,
        north = x,
        south = x_0,
        se = \pi\Sub{\mathcal{T}},
        west/style = lies over,
        east/style = lies over,
        north/style = {exists,->},
      }
      \node (ne) [right = of sq/ne] {$\ArrCat{\GCat}$};
      \node (se) [right = of sq/se] {$\ArrCat{\ECat}$};
      \path[fibration] (ne) edge node [upright desc] {$\prn{j^*}^\to$} (se);
    \end{tikzpicture}
  \end{equation*}
\end{theorem}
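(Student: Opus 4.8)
The plan is to recognize that the genericity argument already carried out in \cref{sec:gluing:genericity-via-realignment} in fact produces a lift \emph{over} a prescribed $x_0$; what remains is to rerun that argument while keeping the ``lies over'' data explicit. First I would fix $f\colon X\to Y \in \mathcal{U}$ together with the given cartesian map $x_0\colon j^*f\to\pi\Sub{\mathcal{T}}$, writing $x_0\colon j^*Y\to U\Sub{\mathcal{T}}$ also for its base. Since $f\in\Cls$, I would assemble the realignment problem of \cref{diag:pi-realignment}: the counit $\Mor[\epsilon]{j_!j^*f}{f}$ is a cartesian monomorphism (by \citet[Proposition 7.7.1]{taylor:1999}, $j_!$ creates nonempty limits and $\epsilon$ is a cartesian natural transformation, while $j_!j^*f\cong j_!\TermObj{\ECat}\times f$ is monic because $j_!$ preserves subterminals), and the transpose $\prn{q\circ x_0}^\sharp\colon j_!j^*f\to\pi\Sub{\Cls}$ across $j_!\dashv j^*$ is cartesian. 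Applying realignment \textbf{(U8)} for $\Cls$ --- legitimate because $f\in\Cls$ --- yields a cartesian map $x\colon f\to\pi\Sub{\Cls}$ with $x\circ\epsilon = \prn{q\circ x_0}^\sharp$.

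Next I would check that $x$ lies over $x_0$ by the triangle identity for $j_!\dashv j^*$:
\[
  j^*x = j^*\prn{x\circ\epsilon}\circ\eta = j^*\prn{q\circ x_0}^\sharp\circ\eta = q\circ x_0,
\]
so the base $x\colon Y\to U\Sub{\Cls}$ satisfies $j^*x = q\circ x_0$. Because $U\Sub{\mathcal{U}}$ was defined in \cref{con:gluing:generic-family} as the cartesian lift of $q\colon U\Sub{\mathcal{T}}\to j^*U\Sub{\Cls}$ along the gluing fibration $j^*$ --- concretely the pullback $U\Sub{\mathcal{T}}\times_{j^*U\Sub{\Cls}}U\Sub{\Cls}$ --- the pair $(x_0, x)$ determines a map $\bar{x}\colon Y\to U\Sub{\mathcal{U}}$ with $\bar{q}\circ\bar{x} = x$ and $j^*\bar{x} = x_0$. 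Finally, since $\pi\Sub{\mathcal{U}}$ is by construction the pullback of $\pi\Sub{\Cls}$ along $\bar{q}$, the cartesian square $x\colon f\to\pi\Sub{\Cls}$ --- whose base factors through $\bar{q}$ via $\bar{x}$ --- factors through a cartesian square $\Mor{f}{\pi\Sub{\mathcal{U}}}$ lying over $\bar{x}$; applying $j^*$, which preserves pullbacks, identifies $j^*\pi\Sub{\mathcal{U}}$ with $\pi\Sub{\mathcal{T}}$ and $j^*\bar{x}$ with $x_0$, exhibiting this cartesian square as restricting to the cartesian square $j^*f\to\pi\Sub{\mathcal{T}}$ over $x_0$, i.e.\ to $x_0$ itself.

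The obstacle is bookkeeping rather than conceptual: one must ensure the chain of factorizations --- realignment into $\pi\Sub{\Cls}$, then the universal property of the cartesian lift $U\Sub{\mathcal{U}}$, then the pullback defining $\pi\Sub{\mathcal{U}}$ --- is coherent on the nose, so that $\prn{j^*}^\to$ applied to the resulting cartesian map into $\pi\Sub{\mathcal{U}}$ recovers \emph{exactly} the given $x_0$ rather than merely something isomorphic to it. This is why it matters that $U\Sub{\mathcal{U}}$ and $\pi\Sub{\mathcal{U}}$ are built from chosen cartesian lifts and pullbacks along the gluing fibration, which $j^*$ preserves; once these strict identifications are in place, \textbf{(U5)} is precisely the special case in which no constraint $x_0$ is imposed.
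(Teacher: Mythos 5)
Your proposal is correct and essentially reproduces the paper's argument: Theorem~\ref{thm:realign-at-syntax} is stated without a separate proof precisely because the authors intend the realignment argument of \cref{sec:gluing:genericity-via-realignment} to already establish it, with $x_0$ read as the given datum rather than an arbitrary choice. You carry out exactly that reading --- the same realignment problem along $\epsilon\colon j_!j^*f\rightarrowtail f$ with upper leg $\prn{q\circ x_0}^\sharp$, the same triangle-identity computation showing $j^*x = q\circ x_0$, and the same use of the cartesian lift $U\Sub{\mathcal{U}}$ and the pullback defining $\pi\Sub{\mathcal{U}}$ --- with the small added value of making the final bookkeeping step (that $j^*$ preserves the defining pullbacks strictly, so the lift really restricts to $x_0$ and not merely an isomorph of it) explicit.
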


This property is particularly useful in proofs of metatheorems of type theories
based on Artin
gluing~\citep{sterling-angiuli-gratzer:2022,sterling-angiuli:2021,gratzer:2022:lics}.
In this context, one typically requires not only that $\mathcal{U}$ be a
pre-universe, but that the chosen codes witnessing \textbf{(U3,4)} are
moreover preserved by $j^*$. Without \cref{thm:realign-at-syntax}, these strict
equations would preclude a conceptual construction of these codes.

\begin{remark}
  \Citet{uemura:2017} presents an alternative construction for a pre-universe in
  $\GCat$ satisfying \cref{thm:realign-at-syntax}. Rather than relying on
  \textbf{(U8)}, \citeauthor{uemura:2017} begins with separate pre-universes from $\ECat$ and
  $\FCat$ and combines them directly. This explicit decomposition ensures that
  the resultant universe satisfies the special case of \textbf{(U8)} necessary
  for \cref{thm:realign-at-syntax}.
\end{remark}

\section{Conclusions and future work}
\label{sec:conclusion}

We have shown that every Grothendieck topos can be equipped with a cumulative
hierarchy of universes satisfying \textbf{(U1--8)} assuming sufficient
universes in the background set theory. This result is important because it
extends the Hofmann--Streicher interpretation of Martin-L\"of type theory in
presheaf topoi to arbitrary sheaf topoi.

\subsection{Prospects for a constructive version}\label{sec:conclusion:constructive}

Our constructions are highly classical; in particular, we rely on the theory of
locally presentable categories and $\kappa$-compactness, both of which make
heavy use of choice. Developing a constructively acceptable version of
\cref{sec:universe} remains an open problem. We briefly survey the landscape of
universes within a particular constructive metatheory: the internal language of
an elementary topos $\ECat$.

Although the literal definition of a Grothendieck universe is meaningless in
$\ECat$, we can proceed analogously and fix a generic map $\Mor{\VEL}{\VTY}$
satisfying the appropriate version of \textbf{(U2--4,6)}. The class $\Cls_\VTY$
classified by this map then satisfies \textbf{(U1--6)}. Already some care must
be taken; without choice, a family with $\VTY$-small fibers need not be
classified by a map into $\VTY$. Absent the law of the excluded middle,
\textbf{(U8)} is satisfied for at least the class of
\emph{decidable} monomorphisms $\Mor|>->|{A}{B}$.

The Hofmann--Streicher construction exposed in \cref{sec:hs-and-s} works over $\ECat$ without
modification. In particular, the standard generic
family of $\Cls_\VTY$ lifts to a universe in the category of internal presheaves
$\Psh[\ECat]{\CCat}$ for any $\VTY$-small internal category $\CCat$. The class of maps
$\PrCls\Sub{\VTY}$ classified by this map satisfies
\textbf{(U1--6)}. \textbf{(U8)} is satisfied only for the class of
\emph{level-wise} decidable monomorphisms: monomorphisms $\Mor|>->|{A}{B}$ whose components
$\Mor|>->|{A\prn{c}}{B\prn{c}} \in \Hom[\ECat]$ are all decidable~\citep{orton-pitts:2016}. In fact, \citet{swan:2018} shows that this
result is sharp: it is possible to choose a base topos in such a way that this generic map cannot satisfy \textbf{(U8)} for all
monomorphisms, though it remains possible that there is \emph{another} generic map satisfying
\textbf{(U8)} for all monomorphisms.
Finally, this universe induces a universe $\ShCls\Sub{\VTY}$ in any sheaf subtopos
$\Sh[\ECat]{\CCat,J}$. The construction is identical to that of \cref{sec:hs-and-s} and $\ShCls\Sub{\VTY}$ satisfies \textbf{(U1--6)} just as in the classical setting. In
this setting, however, the status of \textbf{(U8)} remains entirely open for
this universe.

Over a base topos $\ECat$ not satisyfing the axiom of choice, it is reasonable to hope
that properties such as \textbf{(U7)} or \textbf{(U8)} might lift from $\ECat$
to any topos bounded over $\ECat$; this lifting is verified for \textbf{(U7)}
in the context of \emph{algebraic set
theory}~\citep{joyal-moerdijk:1995,van-den-berg:habil}, but the corresponding
lifting for \textbf{(U8)} remains a conjecture.

\printbibliography

\end{document}